%% LyX 2.3.4.4 created this file.  For more info, see http://www.lyx.org/.
%% Do not edit unless you really know what you are doing.
\documentclass[oneside,11pt]{amsart}
\usepackage[latin9]{inputenc}
\usepackage{amstext}
\usepackage{amsthm}
\usepackage{stmaryrd, theoremref}
\usepackage[margin=1in]{geometry}
\usepackage[citestyle=alphabetic,bibstyle=alphabetic,doi=false,isbn=false,url=false]{biblatex}
\addbibresource{references.bib}

\makeatletter
%%%%%%%%%%%%%%%%%%%%%%%%%%%%%% Textclass specific LaTeX commands.
\numberwithin{equation}{section}
\numberwithin{figure}{section}
\theoremstyle{plain}
\newtheorem{thm}{\protect\theoremname}
\theoremstyle{plain}
\newtheorem{cor}[thm]{\protect\corollaryname}
\theoremstyle{definition}
\newtheorem{defn}[thm]{\protect\definitionname}
\theoremstyle{plain}
\newtheorem{prop}[thm]{\protect\propositionname}
\theoremstyle{plain}
\newtheorem{lem}[thm]{\protect\lemmaname}
\theoremstyle{remark}
\newtheorem{rem}[thm]{\protect\remarkname}

\numberwithin{thm}{section}

\@ifundefined{date}{}{\date{}}
\makeatother

\providecommand{\corollaryname}{Corollary}
\providecommand{\definitionname}{Definition}
\providecommand{\lemmaname}{Lemma}
\providecommand{\propositionname}{Proposition}
\providecommand{\remarkname}{Remark}
\providecommand{\theoremname}{Theorem}

\begin{document}
\title{The Entropy of Ricci Flows with Type-I Scalar Curvature Bounds}
\author{Max Hallgren}

\maketitle

\begin{abstract}
In this paper, we extend the theory of Ricci flows satisfying a Type-I scalar curvature bound at a finite-time singularity. In \cite{bam2}, Bamler showed that a Type-I rescaling procedure will produce a singular shrinking gradient Ricci soliton with singularities of codimension 4. We prove
that the entropy of a conjugate heat kernel based at the singular time converges to the soliton entropy of the singular soliton, and use this to characterize the singular set of the Ricci flow solution in terms of a heat kernel density function.  This generalizes results previously only known with the stronger assumption of a Type-I curvature bound. We also show that in dimension 4, the singular Ricci soliton is smooth away from finitely many points, which are conical smooth orbifold singularities.
\end{abstract}

\section{Introduction}

This paper is concerned with the finite-time singularities of solutions $(M^{n},(g_{t})_{t\in[0,T)})$
to the Ricci flow
\[
\dfrac{\partial}{\partial t}g_{t}=-2Rc(g_{t})
\]
on a closed manifold which satisfy the Type-I scalar curvature condition
\begin{equation} \label{eq:scal}
\limsup_{t\to T}\max_{M}R(\cdot,t)(T-t)<\infty,
\end{equation}
where $T<\infty$ is the maximal existence time.  In particular, we generalize some of the theory of Ricci flow solutions
which satisfy the more stringent Type-I curvature assumption 
\begin{equation} \label{eq:curv}
\limsup_{t\to T}\max_{M}|Rm|(\cdot,t)(T-t)<\infty.
\end{equation}
Ricci flow solutions satisfying \eqref{eq:curv} have been studied
in \cite{topping,mant,naber}, where it was shown that,
for any fixed $q\in M$ and sequence of times $t_{i}\nearrow T$,
a subsequence of $(M^{n},(T-t_{i})^{-1}g_{t_{i}},q)$ converges in
the pointed Cheeger-Gromov sense to a complete Riemannian manifold
$(M_{\infty},g_{\infty},q_{\infty})$ equipped with a function $f_{\infty}\in C^{\infty}(M_{\infty})$
which satisfies the shrinking gradient Ricci soliton (GRS) equation
\[
Rc_{g_{\infty}}+\nabla^{2}f_{\infty}=\frac{1}{2}g_{\infty}.
\]
While it is unknown whether this limiting soliton is uniquely determined
by the basepoint $q$, in \cite{mant} it is shown that all such solitons
share a numerical invariant, called the shrinker entropy $\mathcal{W}(g_{\infty},f_{\infty})$,
(see Section 4), which is determined by $q$. They also show that
$\mathcal{W}(g_{\infty},f_{\infty})=0$ if and only if $(M_{\infty},g_{\infty},f_{\infty})$
is the Gaussian shrinking soliton on flat Euclidean space.

While interesting questions about solutions satisfying \eqref{eq:curv} condition remain open, this condition is often too restrictive
for useful applications of Ricci flow to geometry and topology. Condition \eqref{eq:scal}, on the other hand, is satisfied
by K\"ahler-Ricci flow on a Fano manifold with initial metric in the
canonical K\"ahler class by the work of Perelman (see \cite{kahler}), and is conjectured
to be satisfied for a much larger class of K\"ahler-Ricci flow solutions
(Conjecture 7.7 of \cite{songnotes}). 
One of the main technical difficulties that arises when studying Ricci flows satisfying \eqref{eq:scal} is that one cannot expect Type-I blowups to result in a smooth limiting space. In fact, most results about Ricci flows satisfying \eqref{eq:curv}, including \cite{cao,topping,naber,mant}, depend crucially on applying the Cheeger-Gromov compactness theorems to rescaled solutions. However, in \cite{bam1},\cite{bam2}, Bamler develops an extensive theory for taking weak limits of Ricci flows with uniformly bounded scalar curvature, modeled on the Cheeger-Colding-Naber-Tian theory of noncollapsed Riemannian manifolds with bounded Ricci curvature. In particular, Theorem 1.2 of \cite{bam2} shows that any Ricci flow satisfying \eqref{eq:scal} has a dilation limit which is a singular space (see section 2), and which
possesses the structure of a smooth but incomplete shrinking Ricci soliton outside of a subset of Minkowski codimension 4. 

The main goal of this paper is to extend Bamler's analysis of the singular limits of dilated Ricci flows satisfying \eqref{eq:scal}, and to relate some of their properties to the original Ricci flow. The first main theorem generalizes the aforementioned results in \cite{mant}. 

In order to state this theorem, we first recall a result in \cite{bam2}. Assume $(M^n,(g_t)_{t\in [0,T)})$ is a closed, pointed solution of Ricci flow satisfying (\ref{eq:scal}), and fix any sequence $t_i \nearrow T$. According to Theorem 1.2 of \cite{bam2}, we can pass to a subsequence to get pointed Gromov-Hausdorff convergence of $(M^n,(T-t_i)^{-1}g_{t_i},q)$ to a pointed singular space $(\mathcal{X},q_{\infty})=(X,d,\mathcal{R},g_{\infty},q_{\infty})$. Moreover, there exists $f_{\infty} \in C^{\infty}(\mathcal{R})$ obtained as a limit of rescalings of a conjugate heat kernel based at the singular time, which satisfies the Ricci soliton equation on $\mathcal{R}$. The Ricci soliton $(\mathcal{R},g_{\infty},f_{\infty})$ has a well-defined entropy $\mathcal{W}(g_{\infty},f_{\infty})$, defined in Section 4, and there is a heat kernel density function (defined in Section 3) $\Theta(q)\in (-\infty,0]$ associated to the basepoint $q$.

\begin{thm} \label{mainthm} $\Theta(q)=\mathcal{W}(g_{\infty},f_{\infty})$, with $\Theta(q)=0$ if and only if $(\mathcal{R},g_{\infty},f_{\infty})$
is the Gaussian shrinker on flat $\mathbb{R}^{n}$, in which case
there is a neighborhood $U$ of $q$ in $M$ such that $\sup_{U\times[-2,0)}|Rm|<\infty$. 
\end{thm}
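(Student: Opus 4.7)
The plan is to identify both $\Theta(q)$ and $\mathcal{W}(g_\infty,f_\infty)$ as limits of Perelman's $\mathcal{W}$-functional evaluated on the conjugate heat kernel $K(q,T;\cdot,t) = (4\pi(T-t))^{-n/2} e^{-f(\cdot,t)}$ along the rescaling sequence, and then to combine the standard rigidity of the shrinker entropy with a pseudolocality/$\varepsilon$-regularity argument. By Perelman's monotonicity, the function $t \mapsto \mathcal{W}(g_t, f(\cdot,t), T-t)$ is nondecreasing and bounded above by $0$, so $\Theta(q) := \lim_{t \to T} \mathcal{W}(g_t, f(\cdot,t), T-t) \in (-\infty, 0]$. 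After parabolic rescaling by $\tau_i = T-t_i$, the functions $f_i$ constructed in Bamler's proof of Theorem~1.2 of \cite{bam2} converge smoothly on compact subsets of $\mathcal{R}$ to $f_\infty$, and the rescaled metrics converge in the Gromov--Hausdorff sense to $\mathcal{X}$. Since $\mathcal{W}$ is scale-invariant, $\Theta(q) = \lim_{i \to \infty} \int_M [\tau_i(R + |\nabla f_i|^2) + f_i - n] (4\pi\tau_i)^{-n/2} e^{-f_i} \, dV_{g_{t_i}}$, so the theorem's first claim reduces to passing to the limit inside the integral and recognizing the result as $\mathcal{W}(g_\infty, f_\infty)$.

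The main technical step is the convergence of this integral. On precompact subsets $K \subset \mathcal{R}$ the convergence is immediate from smooth Cheeger--Gromov convergence on $\mathcal{R}$. The tail estimates split into two pieces: an integral over a shrinking tubular neighborhood of the singular set $\mathcal{S}$, and an integral over the region where $f$ is large. For the first, I would use Bamler's codimension-$4$ volume bound for $\mathcal{S}$ together with Gaussian upper bounds for the conjugate heat kernel (which are valid under only a scalar curvature bound by the work on Ricci flows with bounded scalar curvature) to show the contribution vanishes as the tubular radius shrinks. For the second, Gaussian upper bounds on $K$ yield exponential decay of the integrand in $f$, and combined with the soliton identity $\tau(R + |\nabla f|^2) + f = \mathrm{const}$ on the limit this dominates the $|\nabla f_i|^2$ and $|f_i|$ factors. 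The hardest part of the argument will almost certainly lie here, in uniformly controlling both $f_i$ and $|\nabla f_i|$ away from the smooth convergence region without access to a curvature bound.

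For the rigidity, suppose $\Theta(q) = \mathcal{W}(g_\infty, f_\infty) = 0$. On the smooth locus $\mathcal{R}$, the equality case of Perelman's $\mathcal{W}$-inequality forces the local shrinking soliton structure to be the Gaussian one, so $(\mathcal{R}, g_\infty)$ is locally isometric to flat $\mathbb{R}^n$ with $f_\infty = |x|^2/4$ in the canonical chart. Because $\mathcal{R}$ is connected, dense in $\mathcal{X}$, and $\mathcal{S}$ has codimension $4$, a standard extension argument shows $\mathcal{X} = \mathbb{R}^n$ with no singular points. Once the limit is the Gaussian shrinker, $\varepsilon$-regularity (Bamler's version, which only requires the Type-I scalar bound) applies at $q$: some rescaling $(M, (T-t_i)^{-1} g_{t_i}, q)$ sits in the good regime, and propagating this forward via pseudolocality shows that the unrescaled flow has $\sup_{U \times [-2,0)} |Rm| < \infty$ on a neighborhood $U$ of $q$, completing the proof.
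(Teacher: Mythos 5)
Your outline captures the right high-level picture (Gaussian heat-kernel bounds, codimension-$4$ tail control, smooth convergence on the regular part, then rigidity via pseudolocality), but there are two genuine gaps, both of which the paper addresses with dedicated lemmas that your sketch omits. First, you treat the conjugate heat kernel based at the singular time as if it were unique, writing $K(q,T;\cdot,t)$ and defining $\Theta(q)$ as the limit of $\mathcal{W}$ along this single fixed $f$. In fact there is no known uniqueness: the paper defines $\Theta(q)$ as the limit of $\theta_q(t)=\inf_{f\in\mathcal{F}_q}\mathcal{W}(g_t,f(t),|t|)$, and the minimizing potentials $f_{t_i}$ need not coincide (even after rescaling) with Bamler's $f_i$ that produce the $f_\infty$ in the theorem statement. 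Consequently, even after one proves that the minimizing sequence converges to some soliton potential $f_\infty'$ with $\mathcal{W}(g_\infty,f_\infty')=\Theta(q)$, one must still prove $\mathcal{W}(g_\infty,f_\infty')=\mathcal{W}(g_\infty,f_\infty)$. This is exactly what Proposition~\ref{naberextend} supplies (the extension of Naber's Lemma~2.1 to singular solitons via the splitting argument), and without it the equality $\Theta(q)=\mathcal{W}(g_\infty,f_\infty)$ does not close.

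Second, your rigidity argument has a gap: the vanishing of the integrand in $\mathcal{W}(g_\infty,f_\infty)=0$ only reproduces the soliton equation (which already holds), not flatness, and the classical rigidity ``$\mathcal{W}=0$ implies Gaussian'' for shrinkers is proved using completeness, which fails here. The asserted ``standard extension argument'' that would upgrade flatness on $\mathcal{R}$ to $\mathcal{X}=\mathbb{R}^n$ is precisely the hard part and is not standard in the singular setting. The paper sidesteps this entirely by working on the flow: it shows $\mathcal{W}(g_t,f(t),|t|)\to0$ for a suitable $f\in\mathcal{F}_q$, invokes the Hein--Naber $\varepsilon$-regularity (Theorem~\ref{epsilonreg}) to get a curvature-scale lower bound at $\Phi_i(x)$, and then uses backwards pseudolocality to upgrade Gromov--Hausdorff convergence to smooth convergence to the Gaussian shrinker, followed by Perelman-type pseudolocality to obtain the curvature bound near $q$. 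Also worth noting: recognizing the limit of the rescaled $\mathcal{W}$-functionals as $\mathcal{W}(g_\infty,f_\infty)$ with a nonpositive integrand requires the integration-by-parts lemma on the singular space (Section~4), which justifies $\int_{\mathcal{R}}\Delta f_\infty e^{-f_\infty}\,dg_\infty=\int_{\mathcal{R}}|\nabla f_\infty|^2 e^{-f_\infty}\,dg_\infty$; this is where the codimension-$4$ estimates and quadratic growth of $f_\infty$ actually earn their keep, and it is not mentioned in your sketch.
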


In particular, all singular shrinking GRS which arise as Type-I dilation
limits at a fixed point in $M$ possess the same shrinker entropy.
We recall the definition of the singular set of $(M,(g_{t})_{t\in[0,T)})$,
defined in \cite{topping} as
\[
\Sigma:=\left\{ x\in M;\sup_{U\times[0,T)}|Rm|=\infty\text{ for every neighborhood }U\text{ of }q\text{ in }M\right\} .
\]
In the general Riemannian case, little is known about the regularity
or structure of $\Sigma$. In the case where $(M,g_{0})$ is K\"ahler,
it is known that $\Sigma$ is actually an analytic subvariety of $M$, even without the Type-I assumption (see \cite{collinstosatti}). With
a Type-I curvature assumption, it was shown in \cite{mant} that $\Sigma$
is characterized by the density function: $\Sigma=\Theta^{-1}(0)$.
We are able to generalize this result to the case of Type-I scalar
curvature bounds.
\begin{cor} \label{rigiditythm}
Suppose $(M^{n},(g_{t})_{t\in[0,T)},q)$ is a closed, pointed solution
of Ricci flow satisfying \eqref{eq:scal}. Then
$\Sigma=\Theta^{-1}(0)$.
\end{cor}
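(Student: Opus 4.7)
The proposal is to deduce Corollary \ref{rigiditythm} from Theorem \ref{mainthm} by supplying the missing implication. One direction of the identity is already contained in Theorem \ref{mainthm}: if $\Theta(q)=0$ then the singular soliton at $q$ is the Gaussian shrinker on $\mathbb{R}^n$, and the final clause of the theorem supplies a parabolic neighborhood of $q$ on which $|Rm|$ is uniformly bounded, so that $q \notin \Sigma$. The content left to establish in the corollary is therefore the converse implication: that $q \notin \Sigma$ forces $\Theta(q)=0$.

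For the converse I would fix $q \notin \Sigma$, select a spatial neighborhood $U \ni q$ and $\varepsilon>0$ with $|Rm|(g_t)\le C$ on $U \times [T-\varepsilon, T)$, and then rescale by $(T-t_i)^{-1}$ along any sequence $t_i \nearrow T$ used to extract Bamler's singular space limit $(\mathcal{X}, q_\infty)$. The rescaled curvature on any fixed-radius geodesic ball around $q$ is bounded by $C(T-t_i) \to 0$, so Hamilton's compactness theorem yields a smooth pointed Cheeger-Gromov subsequential limit which is flat in a neighborhood of the basepoint. Using the convergence-improvement and $\varepsilon$-regularity results in \cite{bam1,bam2}, this smooth limit must agree with the regular part $(\mathcal{R},g_\infty)$ of $\mathcal{X}$ on a neighborhood of $q_\infty$, placing $q_\infty \in \mathcal{R}$ and making $g_\infty$ flat there. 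Analytic continuation of the shrinking soliton equation on the connected smooth manifold $\mathcal{R}$ then forces $(\mathcal{R}, g_\infty, f_\infty)$ to be globally isometric to the Gaussian shrinker on $\mathbb{R}^n$, and the entropy equality from Theorem \ref{mainthm} gives $\Theta(q) = \mathcal{W}(g_\infty, f_\infty) = 0$.

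The main obstacle is reconciling the two kinds of limits in a neighborhood of $q_\infty$. The uniform curvature bound near $q$ produces a smooth pointed Cheeger-Gromov limit of the rescaled flows by standard compactness, while Bamler's framework only gives pointed Gromov-Hausdorff convergence to a singular space. Closing this gap requires (i) showing $q_\infty \in \mathcal{R}$ via the $\varepsilon$-regularity in \cite{bam2} applied at the uniformly bounded curvature scale, and (ii) matching the two pointed limits as smooth Riemannian manifolds on a neighborhood of $q_\infty$ using the promotion of Gromov-Hausdorff to smooth convergence on $\mathcal{R}$. Once this compatibility is established, the final rigidity step, that a connected smooth shrinker which is flat on an open set is the Gaussian shrinker on $\mathbb{R}^n$, is standard and finishes the proof.
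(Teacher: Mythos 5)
Your overall strategy is the intended one: the implication $\Theta(q)=0\Rightarrow q\notin\Sigma$ is exactly the last clause of Theorem \ref{mainthm}, and the real content is the converse, $q\notin\Sigma\Rightarrow\Theta(q)=0$, proved by identifying the Type-I blowup at $q$ with the Gaussian shrinker and using $\Theta(q)=\mathcal{W}(g_{\infty},f_{\infty})$ (note that what you prove, correctly, is $M\setminus\Sigma=\Theta^{-1}(0)$, i.e. $\Sigma=\{\Theta<0\}$, which is the only reading of the corollary consistent with Theorem \ref{mainthm}). However, the final step of your converse has a genuine gap. You establish flatness of $g_{\infty}$ only near $q_{\infty}$, extend it to all of $\mathcal{R}$ by analyticity of soliton metrics, and then assert that a connected smooth shrinker which is flat on an open set must be the Gaussian shrinker on $\mathbb{R}^{n}$. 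That rigidity is standard only for \emph{complete} shrinkers. Here $(\mathcal{R},g_{\infty},f_{\infty})$ is a priori just the regular part of a singular space, and flatness of $\mathcal{R}$ does not force $\mathcal{W}=0$: the flat cone $X=\mathbb{R}^{n}/\Gamma$ with $\mathcal{R}=(\mathbb{R}^{n}\setminus\{0\})/\Gamma$ and $f=\tfrac14 d^{2}(\cdot,o)-\log|\Gamma|$ is a normalized singular shrinking GRS with connected, flat regular part, codimension-$n$ singular set, and shrinker entropy $-\log|\Gamma|<0$. So after analytic continuation you still cannot conclude $\mathcal{W}(g_{\infty},f_{\infty})=0$; you must in addition show $X\setminus\mathcal{R}=\emptyset$ (equivalently, that the blowup is complete), and nothing in your local matching of the two limits near $q_{\infty}$ rules out a singular point at positive distance from $q_{\infty}$.

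The missing point is that the hypothesis $q\notin\Sigma$ gives control at \emph{every} rescaled distance, not just near the basepoint. Since $|Rm|\leq C$ on $U\times[T-\epsilon,T)$, the metrics $g_{t}$ are uniformly comparable on $U$, so for each fixed $D$ the ball $B_{\widetilde{g}^{i}_{0}}(q,D)=B(q,t_{i},D\sqrt{T-t_{i}})$ lies in $U$ for large $i$, and hence $\sup_{B_{\widetilde{g}^{i}_{0}}(q,D)}|Rm_{\widetilde{g}^{i}_{0}}|\leq C(T-t_{i})\to0$ for every $D$ (and similarly on a backward rescaled time interval). Together with Perelman's no-local-collapsing, Hamilton--Cheeger--Gromov compactness then produces a \emph{complete} flat pointed limit, which by uniqueness of pointed Gromov--Hausdorff limits is all of $\mathcal{X}$; thus $X=\mathcal{R}$ is complete and flat, the equation $\nabla^{2}f_{\infty}=\tfrac12 g_{\infty}$ forces the universal cover picture $\mathbb{R}^{n}$ with $\Gamma$ fixing the critical point of the lifted potential, hence $\Gamma$ trivial, and the normalization of Proposition \ref{normy} gives the Gaussian soliton and $\mathcal{W}(g_{\infty},f_{\infty})=0$, so $\Theta(q)=0$ by Theorem \ref{mainthm}. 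With this replacement of the analytic-continuation step (which, as written, proves too little precisely where the singular structure matters), your argument is complete.
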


Finally, in dimension 4, we extend Bamler's results on the structure
of singular shrinking GRS by giving a more precise description of
the singular part of the shrinking soliton. We let $(X,d,\mathcal{R},g_{\infty})$ be the singular space of Theorem \ref{mainthm} and the discussion preceding it.
\begin{thm} \label{orbtheorem}
If $n=4$, then $X\setminus\mathcal{R}$
consists of finitely many points, and $X$ has the structure
of a $C^{\infty}$ Riemannian orbifold.
\end{thm}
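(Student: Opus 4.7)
The plan is to combine Bamler's codimension $4$ structure theorem with a tangent cone analysis and an $\varepsilon$-regularity theorem of Cheeger--Tian / Anderson type adapted to the singular soliton. First, I would invoke Theorem~1.2 of \cite{bam2} to conclude that $X\setminus \mathcal{R}$ has Minkowski codimension $4$, so in the ambient dimension $n=4$ the singular set is $0$-dimensional in the Minkowski sense. This already rules out arcs or continua of singular points, but does not a priori yield discreteness; for that I would show below that each singular point is isolated.

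The next step is to identify tangent cones of $(X,d)$ at a fixed $p\in X\setminus\mathcal{R}$. Rescaling by $r_i^{-1}\to\infty$ and passing to a subsequence, pointed Gromov--Hausdorff limits $(X_p,d_p,o_p)$ exist by Bamler's compactness. Using a volume-cone-to-metric-cone statement in this setting (an analog of Cheeger--Colding applied to the singular shrinking soliton, together with the almost-monotonicity of the density $\Theta$), every such tangent cone is a metric cone $C(Y_p)$ over a compact length space $Y_p$ of diameter $\le \pi$. On the regular part, the rescaled soliton equation takes the form $Rc_{g_{r_i}}+\nabla^2 f_{r_i}=\tfrac{r_i^2}{2}g_{r_i}$, and a standard parabolic-rescaling argument (using that $r_i\to 0$ and that $f_{r_i}$ is controlled by its gradient) shows that the limit is Ricci-flat on $\mathcal{R}(C(Y_p))$. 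In dimension $4$, a Ricci-flat metric cone forces $Y_p$ to be a smooth $3$-dimensional Einstein manifold of constant sectional curvature $1$, hence $Y_p=S^3/\Gamma_p$ for some finite $\Gamma_p\subset O(4)$ acting freely, and the tangent cone is $\mathbb{R}^4/\Gamma_p$.

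With tangent cones identified, I would appeal to an $\varepsilon$-regularity theorem: there exists $\varepsilon=\varepsilon(\Gamma_p)>0$ such that any pointed ball in the singular soliton which is $\varepsilon$-Gromov--Hausdorff close to the unit ball in $\mathbb{R}^4/\Gamma_p$ is bi-Lipschitz (and indeed $C^\infty$ away from the cone tip) to a ball in a smooth $4$-dimensional orbifold with one orbifold point modeled on $\mathbb{R}^4/\Gamma_p$. In the Einstein setting this is due to Anderson and Cheeger--Tian; here I would port their argument using the improved regularity of the soliton metric on $\mathcal{R}$ provided by Bamler's theory and the fact that the soliton equation becomes an elliptic PDE for $(g_\infty,f_\infty)$ to which standard bootstrap applies. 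This yields an orbifold chart around each singular point, isolates the singular point inside that chart, and therefore gives discreteness of $X\setminus\mathcal{R}$.

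Finally, I would upgrade discreteness to finiteness via a global volume/entropy argument. Each singular point $p$ contributes a definite entropy deficit (depending on $|\Gamma_p|>1$) to $\mathcal{W}(g_\infty,f_\infty)$, by comparing the local density at $p$ with that of $\mathbb{R}^4$, while the total entropy is bounded below by Theorem~\ref{mainthm}. The number of singular points is therefore finite. The main obstacle I anticipate is the tangent cone step: rigorously passing to the Ricci-flat limit in the rescaled soliton equation requires uniform control on $\nabla f_\infty$ near $p$ and the correct choice of normalization for $f_{r_i}$, and establishing that tangent cones are metric cones requires an almost-rigidity statement in the singular soliton setting which is the natural analog of Cheeger--Colding's volume cone rigidity, but for the Gaussian (weighted) density $\Theta$ rather than the usual Bishop--Gromov quantity.
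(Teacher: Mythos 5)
The central gap is in your $\varepsilon$-regularity step. You assert that GH-closeness to the unit ball in $\mathbb{R}^4/\Gamma_p$ already yields a $C^\infty$ orbifold chart, ``porting'' Anderson/Cheeger--Tian ``using the improved regularity of the soliton metric on $\mathcal{R}$ provided by Bamler's theory.'' But the removable singularity theorems of Bando--Kasue--Nakajima, Anderson, Tian, Cao--Sesum and Uhlenbeck all take a local bound $\int_{B^\ast}|Rm|^2\,dg<\infty$ as a \emph{hypothesis}; GH-closeness to a flat cone does not supply it, and there is no black-box $\varepsilon$-regularity theorem in this setting that bypasses it. Establishing this local $L^2$ curvature bound is precisely the main technical content of the paper's proof, and the paper explicitly notes that the global $L^2$ bound that is available in the Fano K\"ahler--Ricci case (via Chern--Gauss--Bonnet on a bounded-diameter manifold) is \emph{not} available here, even under the Type-I curvature assumption. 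The paper obtains the local $L^2$ bound by decomposing $Rm$ into scalar, traceless Ricci, and Weyl parts: the scalar part is bounded pointwise, the traceless Ricci part is controlled via Haslhofer--M\"uller's weighted integration-by-parts argument using $\operatorname{div}_f Rc = 0$, and the Weyl part is controlled via Donaldson--Sun's Chern--Simons/transgression argument applied to the connections on $\Lambda_\pm$. Your proposal treats this as something that ``standard bootstrap'' handles, which is not the case; as written, the step would fail.

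Two further, smaller divergences from the paper. For the metric cone structure of tangent cones, you propose re-deriving an almost-rigidity statement of Cheeger--Colding type adapted to the weighted/soliton density, whereas the paper simply cites Corollary 1.5 of \cite{bam2} for the cone structure; since that result is available off-the-shelf in Bamler's framework, your route introduces unnecessary difficulty. For finiteness, you propose an entropy-deficit argument assigning a quantized cost to each orbifold point; this is plausible but nontrivial to make rigorous (one must relate the local cone density to the global entropy $\mathcal{W}(g_\infty,f_\infty)$). The paper uses a much simpler observation: every orbifold point must be a critical point of $f$ in orbifold coordinates (the gradient must be $\Gamma$-invariant, hence zero), and critical points of $f$ are confined to a bounded set since $R+|\nabla f|^2 = f - \mathcal{W}(g,f)$, $|R|\leq A$, and $f$ has quadratic growth; discreteness plus boundedness then gives finiteness directly.
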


In particular, if $\mathcal{X}=(X,d,\mathcal{R},g)$ is the singular space in Theorem \ref{orbtheorem}, then there exists $f\in C^{\infty}(\mathcal{R})$ such that $(\mathcal{R},g,f)$ is an incomplete but smooth shrinking
GRS, and each $x\in X\setminus\mathcal{R}$ admits a finite group
$\Gamma_{x}\subseteq\mathbb{R}^{4}$ acting linearly and freely away from the origin, along with a homeomorphism
$\varphi_{x}:\mathbb{R}^{4}/\Gamma_{x}\supseteq B(0^4,r_0) \to B^{X}(x,r_0)$ such that,
if $\pi_{x}:\mathbb{R}^{4}\to\mathbb{R}^{4}/\Gamma_{x}$ is the quotient
map, then $\varphi_{x}\circ\pi_{x}$ is a smooth map on $B(0^4,r_0)\setminus\{0\}$,
and $(\varphi_{x}\circ\pi_{x})^{\ast}g$, $(\varphi _x \circ \pi_x )^{\ast}f$ extend smoothly to a Riemannian metric and function on $B(0^4,r_0)$.

Theorem \ref{orbtheorem} was proved in the setting of Fano K\"ahler-Ricci flow in \cite{bingspace1}, where
it was essential that the $L^{2}$ norm of the curvature tensor is
uniformly bounded along the flow. This fails in the general Riemannian
setting (even if we assume \eqref{eq:curv}), so our proof must rely on different arguments.

In Section 2, we collect definitions and results related to Ricci
flows satisfying certain scalar curvature bounds. In Section 3, we
establish Gaussian-type estimates for conjugate heat kernels based
at the singular time, largely along the lines of Bamler and Zhang's
heat kernel estimates. In Section 4, we define shrinker entropy, and
prove an important integration-by-parts lemma for singular shrinking
GRS. In Section 5, we prove the convergence of entropy and the heat
kernel measure. In Section 6, we show that the shrinker entropy of
a normalized singular GRS only depends on the underlying manifold,
and use this to complete the proof of Theorem \ref{mainthm} and Corollary \ref{rigiditythm}. Finally,
in Section 7, we specialize to the case of dimension 4, and prove Theorem \ref{orbtheorem}.

The author would like to thank his advisor Xiaodong Cao for his helpful feedback and support, as well as Jian Song for useful discussions.

\section{Preliminaries and Notation}

Given a solution $(M^{n},(g_{t})_{t\in[0,T)})$ of Ricci flow, we
let $d_{t}:M\times M\to[0,\infty)$ be the length metric induced by
$g_{t}$, and define
\[
B(x,t,r):=B_{g_{t}}(x,r):=\{y\in M;d_{t}(x,y)<r\},
\]
\[
Q^{+}(x,t,r):=\{(y,s)\in M\times[t,t+r^{2}];d_{s}(y,x)<r\},
\]
\[
r_{Rm}^g(x,t):=r_{Rm}(x,t):=\sup\{r>0;|Rm|\leq r^{-2}\text{ on }B(x,t,r)\}.
\]
for all $(x,t)\in M\times[0,T)$ and $r>0$. For measurable $S\subseteq M$,
we set $|S|_{t}:=\text{Vol}_{g_{t}}(S)$. We denote the Lebesgue measure
on a Riemannian manifold $(M,g)$ as $dg$. If we consider a rescaled flow, for example $\widetilde{g}_t=\lambda g_{\lambda^{-1}t}$, we let $\widetilde{d}_t$ be the length metric induced by $\widetilde{g}_t$, $\widetilde{B}(x,t,r):=B_{\widetilde{g}_t}(x,r)$ the corresponding geodesic ball, and so on. If $(X,d)$ is a metric space, we also set
$$B^X(x,r):=\{y\in X;d(x,y)<r\}.$$
If in addition $\text{diam}(X)\leq \pi$, then we denote by $(C(X),d_{C(X)},c_0)$ the corresponding metric cone, with vertex $c_0$.

We recall Perelman's $\mathcal{W}$ functional, defined by
\begin{align*}
\mathcal{W}(g,f,\tau)= & (4\pi\tau)^{-\frac{n}{2}}\int_{M}(\tau(R+|\nabla f|^{2})+f-n)e^{-f}dg
\end{align*}
for any Riemannian metric $g$ on $M$, and any $f\in C^{\infty}(M),$$\tau>0$.
For any compact Riemannian manifold, Perelman's invariants
\[
\mu[g,\tau]:=\inf\left\{ \mathcal{W}(g,f,\tau);f\in C^{\infty}(M)\text{ and }\int_{M}(4\pi\tau)^{-\frac{n}{2}}e^{-f}dg=1\right\} ,
\]

\[
\nu[g,\tau]:=\inf_{s\in[0,\tau]}\mu[g,s].
\]
Note that this definition of $\nu$ is not completely standard.

We now define the class of weak limit spaces we will be considering, following the definitions in \cite{bam1}, \cite{bam2}.  
\begin{defn}
\noindent A singular space is a tuple $\mathcal{X}=(X,d,\mathcal{R},g)$,
where $(X,d)$ is a complete, locally compact metric length space,
and $(\mathcal{R},g)$ is a $C^{\infty}$ Riemannian manifold satisfying
the following:\\
 $(i)$ $d|(\mathcal{R}\times\mathcal{R})$ is the length metric of
$(\mathcal{R},g)$.\\
 $(ii)$ $\mathcal{R}$ is an open, dense subset of $X$.\\
 $(iii)$ for any compact subset $K\subseteq X$ and $D\in(0,\infty)$,
there exists $\kappa=\kappa(K,D)>0$ such that, for all $x\in K$
and $r\in(0,D)$, we have 
\[
\kappa r^{n}\leq|B^X(x,r)\cap\mathcal{R}|\leq\kappa^{-1}r^{n}.
\]
$\mathcal{X}$ is said to have singularities of codimension $p_{0}>0$
if, for all $p\in(0,p_{0})$, $x\in X$ and $r_{0}>0$, there exists
$E_{p,x,r}<\infty$ such that 
\[
|\{r_{Rm}<rs\}\cap B^X(x,r)\cap\mathcal{R}|\leq E_{p,x,r}r^{n}s^{p}
\]
for all $r\in(0,r_{0})$, $s\in(0,1)$. $\mathcal{X}$ is said to
have mild singularities if, for any $p\in\mathcal{R}$, there exists
a closed subset $\mathcal{Q}_{p}\subseteq\mathcal{R}$ of measure
zero such that, for any $x\in\mathcal{Q}_{p}$, there exists a minimizing
geodesic from $p$ to $x$ lying entirely in $\mathcal{R}$. $\mathcal{X}$
is $Y$-regular at scale $a$ if, for any $x\in X$ and $r\in(0,a)$
satisfying $|B(x,r)\cap \mathcal{R}|>(\omega_{n}-Y^{-1})r^{n}$, we have $r_{Rm}(x)>Y^{-1}r$. 
\end{defn}

\begin{defn}
If $(M_{i},g_{i},q_{i})$ is a sequence of complete, pointed Riemannian
manifolds and $(\mathcal{X},q_{\infty})=(X,d,\mathcal{R},g,q_{\infty})$
is a pointed singular space, a convergence scheme $(U_{i},V_{i},\phi_{i})$
for the convergence $(M_{i},g_{i},q_{i})\to(\mathcal{X},q_{\infty})$
consists of open subsets $V_{i}\subseteq M_{i}$, $U_{i}\subseteq\mathcal{R}$,
and diffeomorphisms $\phi_{i}:U_{i}\to V_{i}$ such that the following
hold:\\
$(i)$ $(U_{i})$ is an increasing sequence with $\bigcup_{i}U_{i}=\mathcal{R}$,\\
$(ii)$ $\phi_{i}^{\ast}g_{i}\to g$ in $C_{loc}^{\infty}(\mathcal{R})$,\\
$(iii)$ there exist $q_{i}'\in U_{i}$ with $q_{i}'\to q_{\infty}$
and $d_{M_{i}}(q_{i},\phi_{i}(q_{i}'))\to0$,\\
$(iv)$ for any $D<\infty$ and $\epsilon>0$, there exists $i_{0}=i_{0}(D,\epsilon)\in\mathbb{N}$
such that for all $i\geq i_{0}$ and $x_{1},x_{2}\in B^{X}(q_{\infty},D)\cap U_{i}$,
we have 
\[
|d_{X}(x_{1},x_{2})-d_{M_{i}}(\phi_{i}(x_{1}),\phi_{i}(x_{2}))|<\epsilon,
\]
and such that, for any $y\in B^{M_{i}}(q_{i},D)$, there exists $x\in U_{i}$
such that $d_{M_{i}}(\phi_{i}(x),y)<\epsilon$.

Note that conditions $(iii)$,$(iv)$ imply that $\phi_i$ are Gromov-Hausdorff approximations. If  a convergence scheme exists, we say that $(M_{i},g_{i},q_{i})$
converges to $(\mathcal{X},q_{\infty})$. 
\end{defn}

We will commonly rely on the the main theorem of Bamler in \cite{bam2},
which establishes weak uniqueness properties and integral curvature
bounds for Ricci flows with bounded scalar curvature.

\begin{thm}
\thlabel{bamthm}
\noindent (Theorems 1.4, 1.7 in \cite{bam2}) Suppose $(M_{i}^{n},(g_{t}^{i})_{t\in[-2,0]},q_{i})$ is
a sequence of closed solutions of Ricci flow satisfying the following:\\
 i. $\nu[g_{-2}^{i},4]\geq-A$,\\
 ii. $|R_{g^i}|\leq\rho_{i}\leq A$ on $M_{i}\times[-2,0]$,\\
where $A<\infty$. Then some subsequence of $(M_{i},g_{0}^{i},x_{i})$ converges to a
pointed singular space $(\mathcal{X},q_{\infty})$ with singularities
of codimension 4, which is $Y$-regular at the scale 1, where $Y=Y(n,A)<\infty$.
Also, for any $\epsilon>0$, there exists $C=C(A,\epsilon,n)<\infty$
such that 
\[
\int_{B(x,t,r)}(r_{Rm}(\cdot,t))^{-4+2\epsilon}dg_{t}^{i}<Cr^{n-4+2\epsilon}.
\]
for any $r\in (0,1]$.
\noindent If in addition $\rho_{i}\to0$, then $\mathcal{X}$ is Ricci
flat and has mild singularities. 
\end{thm}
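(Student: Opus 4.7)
The overall strategy is to adapt the Cheeger-Colding-Naber-Tian theory of noncollapsed spaces with bounded Ricci curvature to the parabolic setting, with Perelman's $\nu$-entropy playing the role of the noncollapsing hypothesis and heat kernels together with Perelman's pseudolocality and distance distortion lemmas playing the roles of harmonic coordinates and volume comparison. First, entropy monotonicity and the uniform scalar curvature bound propagate the hypothesis $\nu[g_{-2}^i,4]\geq -A$ to a uniform lower bound $\nu[g_t^i,4-|t|]\geq -A'$ for all $t\in [-2,0]$, which yields uniform two-sided volume-ratio estimates on $g_0^i$-balls up to scale $1$ (the upper bound needing the Perelman distortion estimate to absorb the absence of a Ricci bound). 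This gives pointed Gromov-Hausdorff precompactness and, after a subsequence, a limit $(X,d,q_\infty)$.

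The technical heart of the argument is a parabolic $\epsilon$-regularity theorem: if a parabolic ball $Q^+(x,-r^2,r)$ is sufficiently Gromov-Hausdorff close to the corresponding Euclidean ball and the scalar curvature is small, then $r_{Rm}(x,0)\gtrsim r$. I would prove this by contradiction, rescaling each violating flow at the smallest curvature scale to produce a sequence with $|Rm|(x_i,0)=1$. Conjugate heat kernels based at $(x_i,0)$ become almost Gaussian at large scales, and a Bakry-\'Emery Bochner computation, combined with the almost-rigidity case of entropy monotonicity, forces the rescaled flows to sub-converge to a smooth, nontrivial gradient shrinking Ricci soliton on a metric cone. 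In the nearly-Euclidean regime this soliton must be the Gaussian shrinker on $\mathbb R^n$, contradicting $|Rm|(x_i,0)=1$. I expect this to be the main obstacle: without a two-sided Ricci bound, the standard Anderson--Cheeger--Colding machinery is unavailable, and the compactness of the rescaled flows requires delicate interaction between heat kernel comparison, entropy rigidity, and smooth convergence on the regular set.

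With $\epsilon$-regularity in hand, the codimension-$4$ estimate is obtained by Cheeger-Naber-style quantitative stratification. One defines, at each $(x,r)$, a quantitative symmetry measuring how many independent directions the rescaled parabolic neighborhood nearly splits off, and runs a cone-splitting induction fed by a splitting theorem for Ricci flow: if the conjugate heat kernel from a far-away point is almost Gaussian and almost translation-invariant in one direction, the flow almost splits off an isometric line. A Reifenberg-type covering then shows that the set where $r_{Rm}$ is small has Minkowski codimension arbitrarily close to $4$, and a Vitali covering on the complement yields $\int_{B(x,t,r)} r_{Rm}^{-4+2\epsilon} dg_t^i\leq C r^{n-4+2\epsilon}$. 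The smooth metric $g$ on $\mathcal R$, the $Y$-regularity at scale $1$, and the convergence scheme $(U_i,V_i,\phi_i)$ are produced by a diagonal Cheeger-Gromov argument on $\{r_{Rm}>\delta\}$. Finally, if $\rho_i\to 0$, the soliton equation on each regular limit slice degenerates to $Rc=0$, and mild singularities follow from a Fubini-type argument showing that a.e.\ minimizing geodesic from a regular point avoids the singular set, using the codimension-$4$ bound.
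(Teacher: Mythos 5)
This statement is not proved in the paper at all: it is quoted verbatim from Theorems 1.4 and 1.7 of Bamler's \cite{bam2} and used as an external input, so there is no ``paper's own proof'' against which to compare. Your sketch is therefore reconstructing an argument that the present paper deliberately treats as a black box. As a high-level account of Bamler's strategy it is broadly on the right track --- volume bounds from entropy and the Zhang non-inflating estimate, a heat-kernel-driven $\epsilon$-regularity theorem, quantitative stratification with cone-splitting for the codimension-$4$ estimate, and a diagonal Cheeger--Gromov argument on $\{r_{Rm}>\delta\}$ to build $(\mathcal R,g)$ and the convergence scheme.

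Two details in your final step are, however, not how the cited result is obtained, and would not go through as written. First, in the setting of Theorems 1.4/1.7 there is no soliton equation on the regular limit slices: the shrinking GRS structure on $\mathcal R$ appears only in Theorem 1.2 of \cite{bam2}, where one performs a Type-I rescaling at a fixed singular time and passes a conjugate heat kernel to the limit. For a general sequence with $|R_{g^i}|\leq\rho_i$ the time slices converge to a singular space, not to a soliton, so the statement ``the soliton equation degenerates to $Rc=0$'' has no referent. The correct mechanism for Ricci flatness when $\rho_i\to 0$ is the evolution equation $\partial_t R=\Delta R+2|Rc|^2$: integrating it over $[-2,0]$ against the uniform bound $|R|\leq\rho_i\to 0$ gives an $L^2$ bound on $Rc$ over parabolic neighborhoods that goes to zero, and this forces the smooth part of the limit to be Ricci flat. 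Second, the mild-singularity statement in \cite{bam2} is a genuinely geometric statement about minimizing geodesics avoiding the singular set; it is not a direct Fubini consequence of the Minkowski codimension-$4$ bound (codimension $4>1$ alone would not exclude a geodesic from entering a bad set), but requires a more careful argument involving the structure of the almost-Ricci-flat limit and a parabolic analogue of the Colding--Naber H\"older continuity of tangent cones along geodesics. Since the paper you are reading only invokes the theorem, none of this needs to be reproduced, but these two points would be genuine gaps if the proof were actually being supplied.
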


\noindent The following estimate is a consequence of Perelman's no-local-collapsing
theorem \cite{perl}, Qi Zhang's non-inflating theorem (Theorem 1.1 of \cite{zvol}),
and a basic covering argument (Lemma 2.1 of \cite{BZ2}).\\

\begin{prop} \label{volumebounds}
For any $A<\infty$, there exists $C=C(A)<\infty$ such that, for
any closed solution $(M^{n},(g_{t})_{t\in[-2,0)})$ of Ricci flow
satisfying:\\
 $(i)$ $\nu[g_{-2},4]\geq-A$,\\
 $(ii)$ $|R|\leq A$ on $M\times[-2,0)$ ,\\
then for any $(x,t)\in M\times[-1,0]$, $r>0$, we have 
\[
C^{-1}(\min\{1,r\})^{n}\leq|B(x,t,r)|_{t}\leq Cr^{n}e^{Cr}.
\]
If instead of $(ii)$ we have $|R|\leq A|t|^{-1}$ on $M$ for all
$t\in[-2,0)$, then 
\[
|B(x,t,r)|_{t}\leq Cr^{n}
\]
for all $r\in(0,1]$. 
\end{prop}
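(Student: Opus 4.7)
I will combine three classical inputs---Perelman's monotonicity of the $\mathcal{W}$-functional, his $\kappa$-noncollapsing theorem, and Q.~Zhang's non-inflating theorem (Theorem 1.1 of \cite{zvol})---stitched together via a Vitali-type covering argument following Lemma 2.1 of \cite{BZ2}.

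Perelman's monotonicity of $\mu$ along the flow gives $\mu[g_t,\tau]\geq\mu[g_{-2},\tau+t+2]$ whenever $\tau+t+2\leq 4$, so the hypothesis $\nu[g_{-2},4]\geq -A$ propagates to $\nu[g_t,\tau]\geq -A$ for all $t\in[-1,0]$ and $\tau\in(0,2]$. Set $r_0:=\min\{1,A^{-1/2}\}$. Under hypothesis (ii), any ball $B(x,t,r)$ with $r\leq r_0$ satisfies $R\leq A\leq r^{-2}$ pointwise, so Perelman's $\kappa$-noncollapsing yields $|B(x,t,r)|_t\geq\kappa(A)r^n$ and Zhang's non-inflating theorem yields $|B(x,t,r)|_t\leq C(A)r^n$. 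The lower bound extends to $r\in[r_0,1]$ by monotonicity of $V(\rho):=|B(x,t,\rho)|_t$ in $\rho$ together with $\min\{1,r\}^n\leq 1$, and to $r\geq 1$ by $V(r)\geq V(1)\geq\kappa r_0^n$.

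To extend the upper bound to $r>r_0$, I take a maximal $(r_0/2)$-separated net $\{x_i\}_{i=1}^N\subseteq B(x,t,r)$, so that $B(x,t,r)\subseteq\bigcup_i B(x_i,t,r_0)$ while $\{B(x_i,t,r_0/4)\}_{i=1}^N$ is a disjoint family contained in $B(x,t,r+r_0/4)$. The small-scale bounds give $V(r)\leq N\cdot C(A)r_0^n$ and $N\cdot\kappa(A)(r_0/4)^n\leq V(r+r_0/4)$, which combine into a multiplicative recursion $V(r+r_0/4)\geq c(A)V(r)$; iterating this $O(r/r_0)$ times produces the stated bound $V(r)\leq Cr^n e^{Cr}$, which is the content of Lemma 2.1 of \cite{BZ2}.

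For the Type-I scalar case $|R|\leq A|t|^{-1}$, the same noncollapsing-plus-non-inflating machinery applies directly at scales $r\leq A^{-1/2}|t|^{1/2}$, where the pointwise bound $R\leq r^{-2}$ still holds on $B(x,t,r)$. To reach all $r\in(0,1]$, I would parabolically rescale $\widetilde g_s:=|t|^{-1}g_{|t|s}$, using scale-invariance of $\nu$ combined with entropy monotonicity to propagate the entropy bound to the rescaled flow, then apply case (ii) to $\widetilde g$ and scale back; since $r\leq 1$, the exponential factor in the resulting bound collapses into a universal constant depending only on $A$. The principal technical obstacle is ensuring that the constants in the covering recursion depend only on $A$, uniformly in $r$, $t$, and the underlying geometry of the flow---this is precisely what the cited covering lemma of \cite{BZ2} establishes.
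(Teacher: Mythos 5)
Your overall strategy is exactly the paper's: Perelman's no-local-collapsing for the lower bound, Q.~Zhang's non-inflating estimate for the small-scale upper bound, Lemma 2.1 of \cite{BZ2} for the covering extension, and a parabolic rescaling for the Type-I case.

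However, the covering recursion you wrote out does not establish the upper bound. From $V(r)\leq N\,C(A)r_0^n$ and $N\,\kappa(A)(r_0/4)^n\leq V(r+r_0/4)$ you correctly deduce $V(r+r_0/4)\geq c(A)\,V(r)$ with $c(A)<1$, but this is a \emph{lower} bound on $V(r+r_0/4)$, so iterating it forward only reproduces (a weaker version of) the trivial fact that $V$ is nondecreasing. Running it in reverse gives $V(r)\leq c^{-k}V(r+kr_0/4)$, which is useless because there is no a priori bound on $V$ at the larger scale. No purely volume-comparison recursion of this shape can close; the actual argument in Lemma 2.1 of \cite{BZ2} is an annular net-counting argument. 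One decomposes $B(x,t,r)$ into annuli $A_k:=\{kr_0\leq d_t(x,\cdot)<(k+1)r_0\}$, takes a maximal $\tfrac{r_0}{2}$-net $S_k$ of each, and observes that every point of $A_{k+1}$ lies within $O(r_0)$ of some point of $S_k$ (by walking a geodesic back to $x$ through the previous annulus). Comparing disjoint small balls about $S_{k+1}$ with a bounded union of $O(r_0)$-balls about $S_k$, and applying the small-scale volume bounds to both, gives $|S_{k+1}|\leq D(A,n)|S_k|$; summing over $k$ yields the exponential growth $V(r)\leq Cr^ne^{Cr}$. Your sketch is missing this annular structure, which is the essential geometric input.

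Two smaller points: the rescaling you wrote, $\widetilde g_s:=|t|^{-1}g_{|t|s}$, keeps the scalar curvature bounded only for $s\leq -1$, so you either need a time shift to land at $s=0$ (i.e., use $|t|^{-1}g_{t+|t|s}$, which gives $|\widetilde R|\leq A/(1-s)\leq A$ on $[-2,0]$) or work with the flow on $[-2,-1]$ and shift. Also the rescaled flow only fits on the interval $[-2,0]$ when $|t|\leq 2/3$, so one should split off the range $t\in[-1,-2/3]$ where $|R|\leq A/|t|\leq 3A/2$ and the first case applies directly. Both are routine but need to be stated.
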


We will also need the following distortion estimate for Ricci flows with bounded scalar curvature.

\begin{thm}[Theorem 1.1 in \cite{BZ2}] \label{distortion} Given $A<\infty$, there exists $B=B(A,n)<\infty$ such that the following holds. Suppose $(M^n,(g_t)_{t\in [-2,0]})$ is a closed Ricci flow satisfying:\\
$(i)$ $\nu[g_{-2},4]\geq -A$,\\
$(ii)$ $|R| \leq A$ on $M\times [-2,0]$,\\
then for all $x,y\in M$ and $s,t\in [-1,0]$, we have
$$\frac{1}{B}d_s(x,y) - B \sqrt{|t-s|} \leq d_t(x,y)\leq B d_s(x,y)+B\sqrt{|t-s|}.$$
\end{thm}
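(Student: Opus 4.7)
The plan is to prove the two inequalities separately, using Gaussian estimates for the conjugate heat kernel of the flow as a substitute for pointwise Ricci curvature control. The entropy lower bound $\nu[g_{-2},4]\geq -A$ together with $|R|\leq A$ is exactly the hypothesis that yields both non-collapsing and non-inflating (Proposition \ref{volumebounds}), and hence, by the arguments of Perelman, Q. Zhang and Bamler--Zhang, two-sided Gaussian bounds
$$\frac{c_1}{(t-s)^{n/2}}\exp\!\left(-\frac{d_s(x,y)^{2}}{c_1(t-s)}\right)\;\leq\; K(x,s;y,t)\;\leq\;\frac{c_2}{(t-s)^{n/2}}\exp\!\left(-\frac{d_s(x,y)^{2}}{c_2(t-s)}\right)$$
for the conjugate heat kernel $K$ on $M\times[-1,0]$ with $0< t-s\leq 1$, where $c_i=c_i(A,n)$. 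I would take these estimates as the analytic input and reduce the distance distortion to mass-conservation arguments at the scale $\sqrt{|t-s|}$.

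For the upper bound $d_t(x,y)\leq Bd_s(x,y)+B\sqrt{|t-s|}$, assume $s<t$ and set $r=d_s(x,y)$. The short-range case $r\leq\sqrt{t-s}$ is the core: integrating $K(\cdot,s;y,t)$ against $g_s$ over $B(x,s,C\sqrt{t-s})$ gives mass $\geq c>0$ by the Gaussian lower bound together with the lower volume bound, while near $y$ at time $t$ the kernel is sharply peaked at scale $\sqrt{t-s}$ with total mass $1$; this comparison forces $y$ to lie in $B(x,t,B\sqrt{t-s})$. For the general case, chain a $g_s$-minimizing geodesic from $x$ to $y$ by $\mathcal{O}(r/\sqrt{t-s})$ overlapping balls of radius $\sqrt{t-s}$, apply the short-range estimate on each, and sum. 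The lower bound $d_t(x,y)\geq B^{-1}d_s(x,y)-B\sqrt{|t-s|}$ is proved symmetrically, using the Gaussian \emph{upper} bound together with the non-inflating estimate $|B(x,t,r)|_t\leq Cr^n$ to show that a too-small $d_t(x,y)$ would concentrate more conjugate heat kernel mass in a $g_t$-ball than is compatible with its value at time $s$.

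The main obstacle is the Gaussian lower bound on the conjugate heat kernel, since without pointwise Ricci control one cannot appeal to classical Li--Yau arguments. This is where the entropy hypothesis becomes essential: Perelman's monotonicity and the differential Harnack inequality, combined with the non-collapsed ball volumes supplied by Proposition \ref{volumebounds}, can be run to propagate a pointwise lower bound at the peak of the kernel down to a genuine Gaussian lower bound, at the cost of a large multiplicative constant depending on $A$ and $n$. Once this is in place, the chaining argument above yields a constant $B=B(A,n)$ independent of the specific flow, which is exactly the statement of the theorem.
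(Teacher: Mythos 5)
You should first note that the paper does not prove this statement at all: Theorem \ref{distortion} is imported verbatim from Bamler--Zhang (\cite{BZ2}, Theorem 1.1) and used as a black box, so the only question is whether your sketch would itself constitute a proof. It would not, because its central step is circular. The Gaussian bounds you take as input measure the distance at the single time $s$ in \emph{both} the upper and the lower estimate, and such bounds, together with the volume bounds of Proposition \ref{volumebounds} (which are single-time statements), carry no information whatsoever about $d_t$. The decisive short-range claim --- ``near $y$ at time $t$ the kernel is sharply peaked at scale $\sqrt{t-s}$ \ldots this forces $y$ to lie in $B(x,t,B\sqrt{t-s})$'' --- implicitly invokes concentration of the conjugate heat kernel in $d_t$-balls around its basepoint, i.e.\ a Gaussian estimate with the distance measured at the \emph{other} endpoint time. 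But in the regime $d_s(x,y)\le\sqrt{t-s}$ that concentration statement is equivalent to the distortion estimate you are trying to prove, so nothing has been gained; the same objection applies to your lower-bound direction, where ``mass in a $g_t$-ball \ldots compatible with its value at time $s$'' is exactly the cross-time control in question. (The chaining step is fine once the short-range statement is granted; the gap is entirely in the short-range statement, which \emph{is} the theorem at scale $\sqrt{t-s}$.) This is also why the result required a separate paper even after the Gaussian bounds of \cite{BZ1} were available.

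The actual mechanism in \cite{BZ2} is genuinely different: cross-time Gaussian bounds are a \emph{consequence} of the distortion estimate, not an ingredient. A key input there is an integrated Ricci bound weighted by the conjugate heat kernel, obtained from the identity $\frac{d}{dr}\int_M R\,K\,dg_r=2\int_M|Rc|^2K\,dg_r$ together with $|R|\le A$, which gives $\int_s^t\int_M|Rc|^2K\,dg_r\,dr\le C(A)$ and hence $\int_s^t\int_M|Rc|\,K\,dg_r\,dr\le C(A)\sqrt{t-s}$ by Cauchy--Schwarz; one then controls the time evolution of heat-kernel-averaged distances, which is delicate because $\partial_r d_r(x,y)$ involves Ricci curvature along entire minimizing geodesics rather than at points, and this is where the bulk of the work in \cite{BZ2} lies. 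If you only intend to use the theorem, cite it as the author does; if you intend to prove it, you need an argument of this type --- pointwise Gaussian bounds at a single time plus non-collapsing/non-inflating volume comparisons cannot close the loop.
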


\vspace{4 mm}
Let $(M^{n},(g_{t})_{t\in[-2,0)})$ be a closed solution of Ricci
flow. Standard theory (see, for example, Chapter 24, Section 2 of \cite{chowbook3}) guarantees that, for any $(x,t)\in M\times(-2,0)$,
there exists a unique fundamental solution $K(x,t;\cdot,\cdot):M\times[-2,t)\to(0,\infty)$
of the conjugate heat equation based at $(x,t)$. That is, $K(x,t;\cdot,\cdot)$
is the unique smooth function on $M\times[-2,t)$ such that 
\[
(-\partial_{s}-\Delta_{g_{s}}+R_{g_{s}})K(x,t;\cdot,\cdot)=0\hspace{6mm}\mbox{ on }M\times[-2,t),
\]
\[
\int_{M}K(x,t;y,s)f(y)dg_{s}(y)\to f(x)\hspace{6mm}\mbox{ as }s\nearrow t
\]
for any continuous $f:M\to\mathbb{R}$. Moreover, $K$ is smooth on
its domain, and if $(y,s)$ are fixed, then $K(\cdot,\cdot;y,s)$
is the fundamental solution of the heat equation: 
\[
(\partial_{t}-\Delta_{g_{t}})K(\cdot ,\cdot;y,s)=0\hspace{6mm}\mbox{ on } M\times(s,0),
\]
\[
\int_{M}K(x,t;y,s)f(x)dg_{t}(x)\to f(y)\hspace{6mm}\mbox{ as }t\searrow s.
\]
Given $(q,t)\in M\times(-2,0)$, let $u_{q,t}:M\times[-2,t)\to(0,\infty)$
be the conjugate heat kernel based at $(q,t)$, and write $u_{q,t}(y,s)=(4\pi(t-s))^{-\frac{n}{2}}e^{-f_{q,t}(y,s)}$.
The corresponding pointed entropy is defined to be $\mathcal{W}_{q,t}(\tau):=\mathcal{W}(g_{t-\tau},f_{q,t}(t-\tau),\tau)$.
Note that, if $(M^{n},g_{t})=(\mathbb{R}^{n},g_{euc})$ is the static,
flat Euclidean space, then $u_{x,t}(y,s)=(4\pi(t-s))^{-\frac{n}{2}}e^{-\frac{|x-y|^{2}}{4(t-s)}}$,
and $\mathcal{W}_{x,t}(\tau)=0$ for all $\tau>0$. Perelman's differential
Harnack inequality guarantees that $\mathcal{W}_{x,t}(\tau)\leq0$
in general, and the following $\epsilon$-regularity theorem demonstrates
that, wherever the pointed entropy is almost-Euclidean, the space-time
geometry nearby is almost-Euclidean as well.
\begin{thm} \label{epsilonreg}
(Theorem 1.16 of \cite{hein}) For any $A<\infty$, there exists $\epsilon=\epsilon(n,A)>0$
such that the following holds. Let $(M^{n},(g_{t})_{t\in[-2,0)})$
be a closed Ricci flow satisfying $\nu[g_{-2},4]\geq-A$ and $|R|(\cdot,t)\leq A|t|^{-1}$
on $M$ for all $t\in[-2,0)$. If $(q,t)\in M\times[-1,0)$ satisfies
$\mathcal{W}_{q,t}(\tau)\geq-\epsilon$, then $(r_{Rm}(q,t))^{2}\geq\epsilon\tau$. 
\end{thm}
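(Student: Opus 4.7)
The plan is a contradiction-and-compactness argument in the spirit of Hein--Naber. Suppose toward contradiction that the conclusion fails for some fixed $A<\infty$. Then there exist closed Ricci flows $(M_i^n,(g^i_t)_{t\in[-2,0)})$ satisfying the two hypotheses, basepoints $(q_i,t_i)\in M_i\times [-1,0)$, and scales $\tau_i>0$ with $\mathcal{W}_{q_i,t_i}(\tau_i)\geq -\epsilon_i$ but $(r_{Rm}(q_i,t_i))^2<\epsilon_i\tau_i$, where $\epsilon_i\searrow 0$. Using the monotonicity of $\tau\mapsto \mathcal{W}_{q,t}(\tau)$ as $\tau\searrow 0$, the hypothesis is preserved under replacing $\tau_i$ by anything smaller, so we may additionally arrange that $\tau_i\ll |t_i|$.

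Next I would parabolically rescale by setting $\tilde{g}^i_s:=\tau_i^{-1}g^i_{t_i+\tau_i s}$. Under this change of variables, $\mathcal{W}^{\tilde{g}^i}_{q_i,0}(1)=\mathcal{W}^{g^i}_{q_i,t_i}(\tau_i)\geq -\epsilon_i$ and $(r_{Rm}^{\tilde{g}^i}(q_i,0))^2<\epsilon_i$, while the Type-I bound transfers to $|R_{\tilde{g}^i}|\leq A'$ on $M_i\times[-2,0]$ once $\tau_i\ll |t_i|$, and scale invariance of $\nu$ gives $\nu[\tilde{g}^i_{-2},4]\geq -A'$. Bamler's weak compactness (\thref{bamthm}) then produces, after passing to a subsequence, singular-space convergence $(M_i,\tilde{g}^i_0,q_i)\to(\mathcal{X},q_\infty)$, with $\mathcal{X}$ having singularities of codimension $4$ and being $Y$-regular at scale $1$.

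I would then identify the limit as the Gaussian shrinker on $\mathbb{R}^n$. Using Gaussian-type upper and lower heat kernel bounds of the sort developed in Section 3, the conjugate heat kernels $u^{\tilde{g}^i}_{q_i,0}$ subconverge smoothly on $\mathcal{R}$ to a positive limit $u_\infty=(4\pi\tau)^{-n/2}e^{-f_\infty}$. Since $\tau\mapsto \mathcal{W}^{\tilde{g}^i}_{q_i,0}(\tau)$ is monotone nondecreasing as $\tau\searrow 0$ and uniformly $\leq 0$, the assumption $\mathcal{W}^{\tilde{g}^i}_{q_i,0}(1)\to 0$ forces $\mathcal{W}^{\tilde{g}^i}_{q_i,0}(\tau)\to 0$ for all $\tau\in (0,1]$, so the limiting entropy vanishes identically on this interval. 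The equality case of Perelman's monotonicity formula, combined with the singular integration-by-parts lemma of Section 4, then forces $(\mathcal{R},g_\infty,f_\infty)$ to be the Gaussian shrinking soliton, so $\mathcal{X}=\mathbb{R}^n$ with flat metric.

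Finally, smoothness of the Euclidean limit together with $Y$-regularity at scale $1$ and the convergence scheme force $r_{Rm}^{\tilde{g}^i}(q_i,0)\geq c(n,A)>0$ for all sufficiently large $i$, contradicting $r_{Rm}^{\tilde{g}^i}(q_i,0)\to 0$. I expect the third step --- propagating the almost-equality $\mathcal{W}\to 0$ into pointwise rigidity on the singular limit --- to be the main obstacle: in the smooth Cheeger--Gromov setting flatness would follow immediately from $\mathcal{W}\equiv 0$, but here one must establish convergence of both the entropy functional and the soliton equation in a sufficiently strong sense on the regular part $\mathcal{R}$, and rule out nontrivial singular Gaussian-like competitors. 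This is precisely where the heat kernel estimates of Section 3 and the integration-by-parts machinery for singular shrinking GRS developed in Section 4 are indispensable substitutes for the smooth Cheeger--Gromov compactness available in the Type-I curvature setting.
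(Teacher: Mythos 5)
The paper does not prove this statement; it quotes it as Theorem 1.16 of Hein--Naber \cite{hein}. In fact the cited result is strictly stronger than what you are asked to prove: Hein--Naber's $\epsilon$ depends only on $n$, and no $\nu$-lower bound and no Type-I scalar curvature hypothesis appear in their theorem. Their argument is a point-picking/curvature-scale-rescaling argument that extracts a smooth Cheeger--Gromov limit and forces flatness; it does not pass to a singular limit space. Your proposal --- parabolic rescaling by $\tau_i$ followed by Bamler's singular-space compactness and the entropy-rigidity machinery of Sections 4--6 --- is therefore a genuinely different route, and at best proves the weaker statement with $\epsilon=\epsilon(n,A)$, which is all the paper uses.

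Within your route there is a concrete gap in the step ``so we may additionally arrange that $\tau_i\ll|t_i|$.'' Monotonicity of $\tau\mapsto\mathcal{W}_{q,t}(\tau)$ preserves the entropy inequality $\mathcal{W}_{q_i,t_i}(\tau')\geq -\epsilon_i$ when you shrink $\tau_i$ to $\tau'$, but it does not preserve the negated conclusion $(r_{Rm}(q_i,t_i))^2<\epsilon_i\tau_i$: the bound becomes harder to satisfy as $\tau$ decreases, and if $(r_{Rm}(q_i,t_i))^2\geq\epsilon_i|t_i|$ (which is entirely consistent with your contradiction hypothesis) then no $\tau'\leq|t_i|$ keeps a counterexample with the same $\epsilon_i$. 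You also cannot repair this by enlarging $\epsilon_i$, since you need $\epsilon_i\to 0$ for the limit soliton to have vanishing entropy. This regime $\tau\gg|t|$ is not a corner case: Proposition \thref{rigid} applies Theorem \thref{epsilonreg} with $\tau=\delta$ fixed and $t_i\to 0^-$, so $\tau/|t_i|\to\infty$ there. Without $\tau_i\lesssim|t_i|$, the rescaled scalar curvature at $s=0$ satisfies only $|\tilde R^i|(\cdot,0)\leq A\tau_i/|t_i|$, which is unbounded, so the hypothesis $|R|\leq A$ on $M\times[-2,0]$ needed for Theorem \thref{bamthm} fails and the remainder of your argument is not available. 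To salvage the compactness approach you would either have to split into the cases $\tau_i\lesssim|t_i|$ and $\tau_i\gg|t_i|$ and handle the second by a separate Type-I blow-up at the singular time of the rescaled flow, or abandon the $\tau_i$-rescaling in favor of Hein--Naber's curvature-scale rescaling.

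A secondary, smaller issue: even when $\tau_i\lesssim|t_i|$, the conjugate heat kernel $u^{\tilde g^i}_{q_i,0}$ in your rescaled flow is based at a regular time of that flow (the singular time is $s_{\mathrm{sing}}=|t_i|/\tau_i>0$), so you are not literally in the setting of Sections 3--5 (conjugate heat kernels based at the singular time). The Gaussian bounds and the convergence-to-soliton argument do adapt to this setting via the monotonicity formula, but you should say so explicitly rather than directly invoking those sections.
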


\section{Estimates for Conjugate Heat Kernels Based at the Singular Time}

The following lemma is mostly a combination of the proofs of Theorem 1.2
in \cite{bam2} and Theorem 1.4 in \cite{BZ1}.

\begin{lem} \label{heatkernelbound}
For any $A<\infty$, there exists $C^{*}=C^{*}(A,n)<\infty$ such
that the following holds. Let $(M^{n},(g_{t})_{t\in[-2,0)})$ is a
closed solution of Ricci flow satisfying $\nu[g_{-2},4]\geq-A$ and
$|R|(x,t)\leq A|t|^{-1}$ for all $t\in[-2,0)$. Then, for any $x,y\in M$
and $-\frac{1}{2}\leq s<t<0$, we have 
\[
\dfrac{1}{C^{*}(t-s)^{\frac{n}{2}}}\exp\left(-\dfrac{C^{*}d_{s}^{2}(x,y)}{t-s}\right)\leq K(x,t;y,s)\leq\dfrac{C^{*}}{(t-s)^{\frac{n}{2}}}\exp\left(-\dfrac{d_{s}^{2}(x,y)}{C^{*}(t-s)}\right).
\]
\end{lem}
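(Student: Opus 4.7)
My plan is to follow the strategy of Bamler--Zhang \cite{BZ1}, which establishes analogous Gaussian bounds under uniformly bounded scalar curvature, after first reducing the time-dependent scalar bound $|R|\leq A|t|^{-1}$ to the uniform case by a parabolic rescaling. The upper bound will use Davies' exponential weight method and the lower bound will come from mass conservation combined with a Harnack/chaining argument.

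The first step is a reduction by parabolic rescaling. Fix $t\in[-1/2,0)$, set $\lambda:=|t|^{-1}\geq 2$, and let $\tilde g_\tau:=\lambda g_{\lambda^{-1}\tau}$. For $\tau\in[\lambda s,-1]$ one has $|\lambda^{-1}\tau|\geq |t|$, so $|\tilde R|(\cdot,\tau)=\lambda^{-1}|R|(\cdot,\lambda^{-1}\tau)\leq A$ uniformly. The scale invariance $\nu[\tilde g_{-2\lambda},4\lambda]=\nu[g_{-2},4]\geq -A$ together with Perelman's monotonicity of $\mu$ gives $\nu[\tilde g_\sigma,4]\geq -A$ for all $\sigma\in[-2\lambda,0)$ (for $\sigma\in[-2\lambda,0]$ and $\tau\in[0,4]$ one has $\tau+\sigma+2\lambda\in[0,4\lambda]$, so the monotonicity yields $\mu[\tilde g_\sigma,\tau]\geq \mu[\tilde g_{-2\lambda},\tau+\sigma+2\lambda]\geq -A$). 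Since the asserted estimate is scale invariant, it suffices to prove it under the reduced hypotheses $|R|\leq A$ and $\nu\geq -A$, but on a possibly long time interval.

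For the upper bound I would proceed as in \cite{BZ1}. The uniform Sobolev inequality coming from the $\nu$-bound, combined with the bounded scalar term, gives via Moser iteration the on-diagonal estimate $K(x,t;y,s)\leq C(t-s)^{-n/2}$. The Gaussian factor is obtained by Davies' exponential weight method: for a $1$-Lipschitz function $h$ on $M$ and $\alpha>0$, differentiate $F(\sigma):=\int_M K^2(x,t;\cdot,\sigma)e^{2\alpha h}\,dg_\sigma$ along the conjugate heat equation; the uniform bound on $R$ controls the error terms, and optimizing in $\alpha$ with $h$ chosen to approximate $d_t(y,\cdot)$ produces a Gaussian bound involving $d_t(x,y)$. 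Theorem \ref{distortion} then converts $d_t$ to $d_s$ at the cost of an additive $B\sqrt{t-s}$ error, absorbed into $C^*$.

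For the lower bound I would exploit the conservation law $\int_M K(x,t;\cdot,\sigma)\,dg_\sigma=1$ for the conjugate heat kernel. Together with the just-established Gaussian upper bound and the non-inflating volume bound of Proposition \ref{volumebounds}, this shows that a definite fraction of the total mass lies in the ball $B(x,s,R_0\sqrt{t-s})$ for some $R_0=R_0(A,n)$, so by volume comparison some point in that ball satisfies $K\geq c(t-s)^{-n/2}$. A chaining argument along a $d_s$-geodesic, using Theorem \ref{distortion} together with a standard parabolic Harnack inequality for the heat equation on a Ricci flow with uniformly bounded scalar, then propagates this on-diagonal bound to a Gaussian lower bound at arbitrary $y$. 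The main obstacle throughout is keeping all constants independent of $\lambda$, i.e.\ of the length of the rescaled time interval; this is exactly ensured by the monotonicity of Perelman's invariants and the scale-invariant formulation of both the hypotheses and the conclusion.
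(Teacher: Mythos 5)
The parabolic rescaling $\widetilde g_\tau = |t|^{-1}g_{|t|\tau}$ is exactly what the paper uses as well, and it does reduce the scalar bound to $|\widetilde R|\leq A$ on the rescaled interval. But the reduction you describe is only adequate when $t\leq s/2$: in that regime the rescaled time gap $(t-s)/|t| = |s|/|t|-1 \leq 1$ stays bounded, and the Bamler--Zhang heat kernel estimates apply on a fixed compact time interval. When $s/2 < t < 0$, the rescaled time gap $|s|/|t|-1$ is unbounded (it blows up as $t\nearrow 0$ with $s$ fixed), and the main tools you invoke do \emph{not} hold with uniform constants over arbitrarily long rescaled time intervals. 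The Davies exponential-weight argument incurs an error factor of order $e^{C A (t-s)}$ in the rescaled time variable (or $(|s|/|t|)^{CA}$ if you track the decaying $A/|\tau|$ bound), which is unbounded; and the distortion estimate of Theorem~\ref{distortion} is proved on a unit-length time interval, so composing it across a long interval compounds the multiplicative constant $B$ geometrically. Your final sentence asserting that monotonicity of $\nu$ and scale invariance of the target control these issues is where the argument breaks: monotonicity fixes the Sobolev constant but does nothing to prevent the degradation of the exponential-weight and distance-distortion steps over long times.

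This is precisely why the paper introduces extra machinery for the case $s/2<t<0$. There it splits the heat kernel through the midpoint $(s+t)/2$ via the reproduction formula, and \emph{does not} rescale by $|t|^{-1}$ but rather by $(t-s)^{-1}$ centered at $(s+t)/2$; that normalization keeps the time gap equal to one \emph{and} keeps the rescaled scalar curvature bounded by $2A$ on $r\in[-2,0]$, so BZ1/BZ2 apply with uniform constants on a fixed interval. The off-diagonal bounds then come from the Hein--Naber concentration inequality and the Bamler--Zhang parabolic mean-value inequality, not from Davies' method directly. Your sketch omits all of this (reproduction formula, concentration inequality, mean-value inequality), and without some replacement for it the case $t>s/2$ is not covered. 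A repaired version of your approach would have to either split at the midpoint as the paper does, or supply a proof that the Gaussian estimates and the distance distortion hold uniformly on unbounded rescaled intervals under the hypotheses $|\widetilde R(\cdot,\tau)|\leq A/|\tau|$ and $\nu\geq -A$ — which is essentially the content of the lemma itself.
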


\begin{proof}
\noindent First note the reduced distance bound 
\[
\ell_{(x,t)}(x,s)\leq\dfrac{1}{2\sqrt{t-s}}\int_{0}^{t-s}\sqrt{\tau}\dfrac{A}{|t|+\tau}d\tau\leq\dfrac{1}{2\sqrt{t-s}}\int_{0}^{t-s}\dfrac{A}{\sqrt{\tau}}d\tau=A,
\]
so by Perelman's differential Harnack inequality, $K(x,t;x,s)\geq(4\pi(t-s))^{-\frac{n}{2}}e^{-A}$
for all $x\in M$ and $-2\leq s<t<0$.\\
\textbf{Claim:} There exists $C'=C'(A,n)<\infty$ such that, for $-\frac{1}{2}\leq s<0$
and $t\in(s,\frac{1}{2}s]$, $x,y\in M$, we have 
\begin{equation} \label{earlyheat}
\dfrac{1}{C'(t-s)^{\frac{n}{2}}}\exp\left(-\dfrac{C'd_{\tau}^{2}(x,y)}{t-s}\right)\leq K(x,t;y,s)\leq\dfrac{C'}{(t-s)^{\frac{n}{2}}}\exp\left(-\dfrac{d_{\tau}^{2}(x,y)}{C'(t-s)}\right),
\end{equation}
where $\tau\in\{s,t\}$.\\
 This will just follow from an appropriate rescaling and the corresponding
Gaussian bounds for Ricci flow with bounded scalar curvature. In fact,
consider the rescaled flow $\widetilde{g}_{r}:=|t|^{-1}g_{t+|t|r}$,
$r\in[-|t|^{-1}(2+t),0]$. Then $|\widetilde{R}|\leq A$ on $M\times[-2,0]$,
and 
\[
\nu[\widetilde{g}_{-2},4]=\nu[g_{3t},4|t|]\geq\nu[g_{-2},4|t|+(2-3|t|)]\geq\nu[g_{-2},2+|t|]\geq-A,
\]
so the Bamler-Zhang heat kernel estimates \cite{BZ1} give $C'=C'(A,n)<\infty$
such that for all $x,y\in M$ and $r\in[-1,0)$ we have 
\[
\dfrac{1}{C'|r|^{\frac{n}{2}}}\exp\left(-\dfrac{C'\widetilde{d}_{\tau}^{2}(x,y)}{|r|}\right)\leq\widetilde{K}(x,0;y,r)\leq\dfrac{C'}{|r|^{\frac{n}{2}}}\exp\left(-\dfrac{\widetilde{d}_{\tau}^{2}(x,y)}{C'|r|}\right),
\]
where $\tau\in\{0,r\}$. Also, we know the behavior of the heat kernel
under rescaling: $\widetilde{K}(x,0;y,r)=|t|^{\frac{n}{2}}K(x,t;y,t+|t|r)$,
so taking $r:=-|t|^{-1}(t-s)$ gives $K(x,t;y,s)=|t|^{-\frac{n}{2}}\widetilde{K}(x,0;y,-|t|^{-1}(t-s))$.
Note that $|t|^{-1}(t-s)\leq(|s|/2)^{-1}\cdot(s/2-s)=1$ because $t\in(s,s/2]$,
so the claim follows. $\square$\\
Now consider the case where $s/2<t<0$. A special case of the reproduction
formula for the heat kernel is 
\[
K(x,t;y,s)=\int_{M}K(x,t;z,\frac{1}{2}(t+s))K(z,\frac{1}{2}(t+s);y,s)dg_{\frac{1}{2}(t+s)}(z)
\]
for all $x,y\in M$ and $-2\leq s<0$ and $t\in(s/2,0)$. Also, the
above claim implies 
\[
K(z,\frac{1}{2}(t+s);y,s)\leq\dfrac{2^{\frac{n}{2}}C'}{(t-s)^{\frac{n}{2}}},
\]
so combining this with the reproduction formula gives 
\[
K(x,t;y,s)\leq\dfrac{2^{\frac{n}{2}}C'}{(t-s)^{\frac{n}{2}}}\int_{M}K(x,t;z,\frac{1}{2}(t+s))dg_{\frac{1}{2}(t+s)}(z)=\frac{2^{\frac{n}{2}}C'}{(t-s)^{\frac{n}{2}}}.
\]
Set $c_{0}:=(4\pi)^{-\frac{n}{2}}e^{-A}$, so that the above claim
gives $D=D(A,n)<\infty$ such that, for any $x,y\in M$ with $d_{\frac{1}{2}(t+s)}(x,y)\geq D\sqrt{t-s}$,
we have $$K(x,\frac{1}{2}(t+s);y,s)\leq\dfrac{c_{0}}{2(t-s)^{\frac{n}{2}}}.$$
Thus, for any $x\in M$, we have 
\begin{align*}
\dfrac{c_{0}}{(t-s)^{\frac{n}{2}}} & \leq K(x,t;x,s)=\int_{M}K(x,t;y,\frac{1}{2}(t+s))K(y,\frac{1}{2}(t+s);x,s)dg_{\frac{1}{2}(t+s)}(y)\\
 & \leq\dfrac{2^{\frac{n}{2}}C'}{(t-s)^{\frac{n}{2}}}\int_{B(x,\frac{1}{2}(t+s),D\sqrt{t-s})}K(x,t;y,\frac{1}{2}(t+s))dg_{\frac{1}{2}(t+s)}(y)+\dfrac{c_{0}}{2(t-s)^{\frac{n}{2}}},
\end{align*}
so applying the upper bound of (\ref{earlyheat}) with $\tau=\frac{1}{2}(t+s)$ gives
\begin{equation} \label{intlowerbound}
\int_{B(x,\frac{1}{2}(t+s),D\sqrt{t-s})}K(x,t;y,\frac{1}{2}(t+s))dg_{\frac{1}{2}(t+s)}(y)\geq c_{0}2^{-\frac{(n+2)}{2}}(C')^{-1}=:c_{0}'.
\end{equation}

Now consider the rescaled flow $\widehat{g}_{r}:=(t-s)^{-1}g_{(t-s)r+\frac{1}{2}(s+t)}$,
which satisfies $$|\widehat{R}|\leq\dfrac{2A}{|s+t|}(t-s)\leq2A$$ 
for $r\in[-2,0]$, and
\[
\nu[\widehat{g}_{-2},4]=\nu[g_{\frac{1}{2}(s+t)-2(t-s)},4(t-s)]\geq\nu[g_{-2},4(t-s)+\frac{1}{2}(s+t)-2(t-s)+2]\geq\nu[g_{-2},4]
\]
since $-\frac{1}{2}\leq s\leq t<0$. We can thus apply Theorem \ref{distortion} to obtain $B=B(A,n)<\infty$ such that
$$\frac{1}{B}\widehat{d}_{r_1}(x_1,x_2) -B\sqrt{|r_2-r_1|}\leq \widehat{d}_{r_2}(x_1,x_2)\leq B\widehat{d}_{r_2}(x_1,x_2)+B\sqrt{|r_2-r_1|}$$
for all $r_1,r_2\in [-\frac{1}{2},0]$ and $x_1,x_2 \in M$. In terms of the unrescaled flow, this gives 
\begin{equation} \label{rescaleddist} \frac{1}{B}d_{t_1}(x_1,x_2) - B\sqrt{|t_2-t_1|} \leq d_{t_2}(x_1,x_2)\leq Bd_{t_1}(x_1,x_2)+B\sqrt{|t_2-t_1|} \end{equation}
for all $t_1,t_2 \in [s,\frac{1}{2}(s+t)]$ and $x_1,x_2\in M$. For any $x,y\in M$, we combine (\ref{earlyheat}),(\ref{intlowerbound}),(\ref{rescaleddist}) to obtain 
\begin{align*}
K(x,t;y,s) & \geq\int_{B(x,\frac{1}{2}(t+s),D\sqrt{t-s})}K(x,t;z,\frac{1}{2}(t+s))K(z,\frac{1}{2}(t+s);y,s)dg_{\frac{1}{2}(t+s)}(z)\\
 & \geq\int_{B(x,\frac{1}{2}(t+s),D\sqrt{t-s})}K(x,t;z,\frac{1}{2}(t+s))\\&\hspace{10 mm}\times
 \dfrac{1}{C'(t-s)^{\frac{n}{2}}}\exp\left(-\dfrac{2C'}{t-s}(d_{\frac{1}{2}(s+t)}(x,y)+D\sqrt{t-s})^{2}\right)dg_{\frac{1}{2}(t+s)}(z)\\
 & \geq\dfrac{c_{0}'}{C'(t-s)^{\frac{n}{2}}}e^{-4C'D^{2}}\exp\left(-\dfrac{4C'}{t-s}\left( Bd_s(x,y)+B\sqrt{|t-s|} \right)^2 \right)
 \\ &\geq \frac{c_0'}{C'(t-s)^{\frac{n}{2}}}e^{-4C'D^2-8C'B^2}\exp \left( -\frac{8C'B^2 d_s^2(x,y)}{t-s} \right)
\end{align*}
This and (\ref{earlyheat}) give $C^{*}(A,n)<0$ such that, for all $x,y\in M$
and $-\frac{1}{2}\leq s<t<0$, we have 
\[
K(x,t;y,s)\geq\dfrac{1}{C^{*}(t-s)^{\frac{n}{2}}}\exp\left(-\dfrac{C^{*}d_{s}^{2}(x,y)}{t-s}\right).
\]
In particular, for any $r\in[s,\frac{1}{2}(s+t)]$, we have 
\[
\int_{B(x,r,\sqrt{t-r})}K(x,t;y,r)dg_{r}(y)\geq\dfrac{e^{-C^{*}}}{C^{*}(t-s)^{\frac{n}{2}}}|B(x,r,\sqrt{t-r})|_{r}.
\]
Applying Theorem \ref{volumebounds} to the rescaled flow gives
\[
|\widehat{B}(y,r,\frac{1}{\sqrt{2}})|_{\widetilde{g}_r}\geq b\hspace{6mm}\mbox{ for all }\hspace{6mm}r\in[-\frac{1}{2},0],
\]
where $b=b(A,n)>0$. Thus, for any $r\in[s,\frac{1}{2}(s+t)]$, we have
\begin{align*}
|B(x,r,\sqrt{t-r})|_{g_r}&\geq |B(x,r,\frac{1}{\sqrt{2}}\sqrt{t-s})|_{g_r}=(t-s)^{\frac{n}{2}}|\widehat{B}(y,\tau,\frac{1}{\sqrt{2}})|_{\widetilde{g}_r}\\&\geq b(t-s)^{\frac{n}{2}},
\end{align*}
where $\tau$ is defined by $\frac{1}{2}(s+t)+(t-s)\tau=r$, so that
$\tau\in[-\frac{1}{2},0]$. Combining estimates gives
\begin{equation} \label{lowerconcentrate} \int_{B(x,r,\sqrt{t-r})} K(x,t;y,r)dg_r(y) \geq b(C^{\ast})^{-1}e^{-C^{\ast}}=:c_{\ast} \end{equation}
for all $r\in [s,\frac{1}{2}(s+t)]$.

\noindent \textbf{Case 1:} For any $y\in M$ with $d_{s}(x,y)\geq 4B^2\sqrt{t-s}$, the distortion estimate (\ref{rescaleddist}) gives
$$d_r(x,y)\geq \frac{1}{B}d_s(x,y)-B\sqrt{r-s} \geq \frac{1}{2B}d_s(x,y)$$
for all $r\in [s,\frac{1}{2}(s+t)]$. Then the Hein-Naber concentration
inequality (Theorem 1.30 of \cite{hein}) gives 
\begin{align*}
\left(\int_{B(x,r,\sqrt{t-r})}K(x,t;z,r)dg_{r}(z)\right) \hspace{ -20 mm}& \hspace{20 mm} \left(\int_{B(y,r,\sqrt{t-r})}K(x,t;z,r)dg_{r}(z)\right) \\& \leq\exp\left(-\dfrac{(d_{r}(x,y)-2\sqrt{t-r})^{2}}{8(t-r)}\right)\\
 & \leq \exp \left(- \frac{1}{8(t-r)}\left( \frac{1}{B}d_s(x,y)-2\sqrt{t-s} \right)^2 \right) \\ &\leq \exp \left( -\frac{d_s^2(x,y)}{32B^2(t-s)} \right).
\end{align*}
Combining this with \ref{lowerconcentrate} gives 
\begin{align*}
\int_{B(y,r,\sqrt{t-r})}K(x,t;z,r)dg_{r}(z)&\leq c_{\ast}^{-1} \exp\left(-\dfrac{d_{s}^{2}(x,y)}{32B^{2}(t-s)}\right) \end{align*}
We integrate from $r=s$ to $r=\frac{1}{2}(t+s)$ to get
\[
\int_{Q^{+}(y,s,\sqrt{\frac{1}{2}(t-s)})}K(x,t;z,r)dg_{r}(z)dr\leq c_{\ast}^{-1}(t-s)\exp\left(-\dfrac{d_{s}^{2}(x,y)}{32B^2(t-s)}\right).
\]
We now combine this with the on-diagonal upper bound, obtaining $\overline{C}=\overline{C}(A,n)<\infty$ such that
\[
\int_{Q^{+}(y,s,\sqrt{\frac{1}{2}(t-s)})}K^{2}(x,t;z,r)dg_{r}(z)dr\leq\dfrac{\overline{C}}{(t-s)^{\frac{n}{2}-1}}\exp\left(-\dfrac{d_{s}^{2}(x,y)}{\overline{C}(t-s)}\right).
\]
In terms of the rescaled flow, this is 
\[
\int_{\widehat{Q}^{+}(y,-\frac{1}{2},\frac{1}{\sqrt{2}})}\widehat{K}^{2}(x,1/2;z,r)d\widehat{g}_{r}(z)dr\leq\overline{C}\exp\left(-\dfrac{d_{s}^{2}(x,y)}{\overline{C}(t-s)}\right),
\]
The Bamler-Zhang parabolic mean value inequality for solutions to
the conjugate heat equation (Lemma 4.2 in \cite{BZ1}) applied to
the rescaled flow (on the time interval $[-\frac{1}{2},0]$) gives 
\[
\widehat{K}^{2}(x,\frac{1}{2};y,-\frac{1}{2})\leq C''\int_{\widehat{Q}^{+}(y,-\frac{1}{2},\frac{1}{\sqrt{2}})}\widehat{K}^{2}(x,\frac{1}{2};z,r)d\widehat{g}_{u}(z)du\leq C''\overline{C}\exp\left(-\dfrac{d_{s}^{2}(x,y)}{\overline{C}(t-s)}\right),
\]
for some $C''=C''(A,n)<\infty$, so rescaling back gives 
\begin{align*}
K^{2}(x,t;y,s) \leq & \frac{C''\overline{C}}{(t-s)^n}\exp\left( -\frac{d_s^2(x,y)}{\overline{C}(t-s)} \right).
\end{align*}

\noindent \textbf{Case 2:} If instead $d_{s}(x,y)\leq4B^2\sqrt{t-s}$,
then 
\[
K(x,t;y,s)\leq\dfrac{2^{\frac{n}{2}}C'}{(t-s)^{\frac{n}{2}}}\leq\dfrac{e2^{\frac{n}{2}}C'}{(t-s)^{\frac{n}{2}}}\exp\left(-\dfrac{d_{s}^{2}(x,y)}{16B^{2}(t-s)}\right).
\]
\end{proof}

\vspace{4 mm}
\noindent Throughout this section, let $u_{q,t}$ be the conjugate
heat kernel based at $(q,t)$, and write $u_{q,t}(x,s)=(4\pi(t-s))^{-\frac{n}{2}}e^{-f_{q,t}(x,s)}.$ The following lemma is essentially obtained by passing Lemma \ref{heatkernelbound} to the limit as $t \nearrow 0$, and extends Propositions 2.7 and 2.8 of \cite{mant}.\\

\begin{lem} \label{heatsing}
Let $(M^{n},(g_{t})_{t\in[-2,0)},q)$ be a closed, pointed Ricci flow
with $\nu[g_{-2},4]\geq-A$ and $|R(x,t)|\leq A|t|^{-1}$ for all
$(x,t)\in M\times[-2,0)$. Also suppose $t_{i}\nearrow0$ and $q_{i}\to q$
in $M$. Then there is some subsequence of $(u_{q_{i},t_{i}})_{i\in\mathbb{N}}$
which converges in $C_{loc}^{\infty}(M\times(-1,0))$ to some $u_{q,0}\in C^{\infty}(M\times[-1,0))$
satisfying $\int_{M}u_{q,0}(x,t)dg_{t}(x)=1$ for $t\in[-1,0)$, as
well as the conjugate heat equation $(-\partial_{t}-\Delta+R)u_{q,0}=0$.
In addition, there exists $C=C(A,n)<\infty$ such that 
\[
\dfrac{1}{C|s|^{\frac{n}{2}}}\exp\left(-\dfrac{Cd_{s}^{2}(y,q)}{|s|}\right)\leq u_{q,0}(y,s)\leq\dfrac{C}{|s|^{\frac{n}{2}}}\exp\left(-\dfrac{d_{s}^{2}(y,q)}{C|s|}\right)
\]
for all $(y,s)\in M\times[-1,0)$. 
\end{lem}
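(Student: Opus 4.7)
The approach is to extract a $C^\infty_{\mathrm{loc}}$ subsequential limit $u_{q,0}$ of the family $u_{q_i,t_i}$ by parabolic compactness, and then to read off the asserted properties by passing to the limit. The key preparatory input is a uniform-in-$i$ pair of Gaussian upper and lower bounds for $u_{q_i,t_i}(y,s)=K(q_i,t_i;y,s)$ on $M\times[-1,s_0]$ for each fixed $s_0\in(-1,0)$, valid once $i$ is large enough that $t_i>s_0$. For $s\in[-1/2,s_0]$, Lemma~\ref{heatkernelbound} applies directly and yields the required estimate. For $s\in[-1,-1/2]$, the hypothesis $|R|\leq A|t|^{-1}$ gives $|R|\leq 2A$ on $M\times[-2,-1/2]$, so the Bamler--Zhang heat kernel estimates (Theorem~1.4 of \cite{BZ1}) yield Gaussian bounds on $K(z,-1/2;y,s)$ over this earlier, scalar-curvature-bounded slab. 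Plugging these into the reproduction formula
\[
u_{q_i,t_i}(y,s)=\int_{M}K(q_i,t_i;z,-1/2)\,K(z,-1/2;y,s)\,dg_{-1/2}(z)
\]
and using the distortion bound of Theorem~\ref{distortion} (applied to a suitable rescaling of the scalar-curvature-bounded flow on $[-2,-1/2]$) to relate $d_{-1/2}$ with $d_s$ in the exponent, one obtains $i$-uniform Gaussian bounds of the desired form on $M\times[-1,-1/2]$.

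With uniform $L^\infty$ control secured, standard interior parabolic Schauder estimates for the conjugate heat equation $(-\partial_s-\Delta_{g_s}+R_{g_s})u_{q_i,t_i}=0$ yield uniform $C^k$ bounds on each compact subset of $M\times[-1,s_0]$, for every $k\in\mathbb{N}$. A diagonal Arzel\`a--Ascoli extraction along an exhaustion $s_0\nearrow 0$ then produces a subsequence converging in $C^\infty_{\mathrm{loc}}(M\times(-1,0))$, and in fact on all of $M\times[-1,0)$, to a smooth function $u_{q,0}\in C^\infty(M\times[-1,0))$. The conjugate heat equation passes to the limit by smoothness of the convergence. The mass identity $\int_M u_{q,0}(\cdot,s)\,dg_s=1$ follows from $\int_M u_{q_i,t_i}(\cdot,s)\,dg_s=1$ by $C^0$-convergence on the compact manifold $M$ at each fixed $s\in[-1,0)$. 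Finally, letting $i\to\infty$ in the uniform Gaussian bounds above, with $t_i-s\to|s|$ and $q_i\to q$ (noting $d_s$ is independent of $i$), yields the stated bounds on $u_{q,0}$.

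The principal technical point is producing Gaussian bounds uniformly down to $s=-1$, since the hypothesis $s\geq-1/2$ of Lemma~\ref{heatkernelbound} does not reach that far; as sketched above, this is resolved by a reproduction-formula splicing with the Bamler--Zhang estimates on the scalar-curvature-bounded piece $M\times[-2,-1/2]$, together with a distortion comparison of the metrics $g_{-1/2}$ and $g_s$ on that interval. Once this uniform control is in hand, the remainder of the proof is a routine parabolic compactness and limit-passage argument.
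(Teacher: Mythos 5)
Your proposal is correct and follows the same overall strategy as the paper (extract a $C^\infty_{\mathrm{loc}}$ limit by parabolic compactness, then pass the Gaussian bounds of Lemma~\ref{heatkernelbound} to the limit). The one substantive difference is worth noting: the paper applies Lemma~\ref{heatkernelbound} directly for all $s\in[-1,-\alpha]$, but that lemma is only stated (and its rescaling argument only directly works) for $s\geq -\tfrac12$; for $s$ near $-1$ the auxiliary rescaled flow $\widehat g_r$ would need to be defined on $[-2,0]$ but the original flow doesn't provide enough room in time. You notice this mismatch and patch the range $s\in[-1,-\tfrac12]$ by splicing through the reproduction formula at time $-\tfrac12$, using Lemma~\ref{heatkernelbound} for the factor $K(q_i,t_i;\cdot,-\tfrac12)$ and the Bamler--Zhang Gaussian estimates on the scalar-curvature-bounded slab $[-2,-\tfrac12]$ for the factor $K(\cdot,-\tfrac12;y,s)$, together with the distortion bound to convert between $d_{-1/2}$ and $d_s$. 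This is a genuine and correct repair; the paper's proof as written implicitly assumes an extension of Lemma~\ref{heatkernelbound} to $s\in[-1,0)$ that it never states. Beyond that, the remaining steps (parabolic Schauder, diagonal extraction, passage of the conjugate heat equation and mass identity, and letting $t_i-s\to|s|$, $q_i\to q$ in the Gaussian estimates using the triangle inequality $d_s^2(q_i,y)\leq 2(d_s^2(q_i,q)+d_s^2(q,y))$ and $d_s(q_i,q)\to 0$) match the paper's argument.
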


\begin{proof}
For any closed solution of Ricci flow, a subsequence of $u_{q_{i},t_{i}}$ must converge
in $C_{loc}^{\infty}(M\times[-2,0))$ to some $u_{q,0}$ solving the
conjugate heat equation on $M\times[-2,0)$, as shown in \cite{mant}.
Since $M$ is closed, $\int_{M}u_{q,0}(x,t)dg_{t}(x)=1$ is immediate,
so it suffices to prove the Gaussian bounds for any limit $u_{q,0}$.
Fix $\alpha\in(0,1]$, and let $i_{0}\in\mathbb{N}$ be sufficiently
large so that $t_{i}-\alpha\geq\frac{1}{2}\alpha$ for all $i\geq i_{0}$.
By the previously established heat kernel bounds, there exists $C^{\ast}=C^{\ast}(A,n)<\infty$
such that, for all $(y,s)\in M\times[-1,-\alpha]$ and $i\geq i_{0}$,
we have 
\begin{align*}
u_{q_{i},t_{i}}(y,s) & \geq\dfrac{1}{C^{\ast}(t_{i}-s)^{\frac{n}{2}}}\exp\left(-\dfrac{2C^{\ast}(d_{s}^{2}(q_{i},q)+d_{s}^{2}(q,y))}{t_{i}-s}\right)\\
 & \geq\dfrac{1}{C^{\ast}|s|^{\frac{n}{2}}}\exp\left(-\dfrac{2C^{\ast}d_{s}^{2}(q_{i},q)}{\frac{1}{2}\alpha}\right)\exp\left(-\dfrac{2C^{\ast}d_{s}^{2}(q,y)}{\frac{1}{2}|s|}\right).
\end{align*}
Note that $d_{s}(q_{i},q)\to0$ uniformly in $s\in[-1,-\alpha]$ as
$i\to\infty$, so for any $(y,s)\in M\times[-1,-\alpha]$, 
\[
u_{q,0}(y,s)=\lim_{i\to\infty}u_{q_{i},t_{i}}(y,s)\geq\dfrac{1}{C^{\ast}|s|^{\frac{n}{2}}}\exp\left(-\dfrac{4C^{\ast}d_{s}^{2}(q,y)}{|s|}\right).
\]
Similarly, for any $(y,s)\in M\times[-1,-\alpha]$ and $i\geq i_{0}$,
we have (since $(a-b)_{+}^{2}\geq\frac{1}{2}a^{2}-b^{2}$ for $a,b>0$)
\begin{align*}
u_{q_{i},t_{i}}(y,s)&\leq\dfrac{C^{\ast}}{(t_{i}-s)^{\frac{n}{2}}}\exp\left(\dfrac{-\frac{1}{2}d_{s}^{2}(q,y)+d_{s}^{2}(q,q_{i})}{C^{\ast}(t_{i}-s)}\right)\\ &\leq\dfrac{2^{\frac{n}{2}}C^{\ast}}{|s|^{\frac{n}{2}}}\exp\left(\dfrac{d_{s}^{2}(q,q_{i})}{\frac{1}{2}C^{\ast}|s|}\right)\exp\left(-\dfrac{d_{s}^{2}(q,y)}{2C^{\ast}|s|}\right),
\end{align*}
so the claim follows as for the lower bound. 
\end{proof}

\begin{defn}
Any limit $u_{q,0}$ as in the statement of Lemma 10 is called a conjugate heat kernel based
at the singular time. The set of
such functions $u_{q,0}$ is denoted $\mathcal{U}_{q}$, as in \cite{mant}.
\end{defn}
Note that we are not able to establish the uniqueness
of $u_{q,0}$ given a point $q\in M$ (in fact, this is not even known
under assumption \eqref{eq:curv}), but the collection of
such functions satisfies strong compactness properties. By the uniform Gaussian estimates and parabolic regularity on compact
subsets of $M\times(-1,0)$, $\mathcal{U}_q$ is compact in $C_{loc}^{\infty}$.
Let $\mathcal{F}_{q}$ be the set of $f_{q,0}\in C^{\infty}(M\times(-1,0))$,
where $u_{q,0}(x,t)=(4\pi|t|)^{-\frac{n}{2}}e^{-f_{q,0}(x,t)}$. By
the locally uniform bounds on $u_{q}\in\mathcal{U}_{q}$ and their
derivatives, we observe that $\mathcal{\mathcal{F}}_{q}$ is also
compact in $C_{loc}^{\infty}$. Thus Perelman's differential Harnack
inequality passes to the limit to give 
\[
\tau(R+2\Delta f_{q}-|\nabla f_{q}|^{2})+f_{q}-n\leq0\hspace{6mm}\mbox{ on }\hspace{6mm}M\times(-1,0)
\]
for any $f_{q}\in\mathcal{F}_{q}$, where $\tau:=|t|$. As in \cite{mant}, we also define 
\[
\theta_{q}(t):=\inf_{f\in\mathcal{F}_{q}}\mathcal{W}(g_{t},f(t),\tau(t)).
\]
Because $\mathcal{F}_{q}$ is compact in $C_{loc}^{\infty}$, and
because $\theta_{q}(t)\geq\mu[g_{t},\tau(t)]>-\infty$, this infimum
is actually achieved at any $t\in(-1,0)$ by some $f_{t}\in\mathcal{F}_{q}$. For $-1<s<t<0$, Perelman's entropy monotonicity
gives 
\[
\theta_{q}(s)\leq\mathcal{W}(g_{s},f_{t}(s),\tau(s))\leq\mathcal{W}(g_{t},f_{t}(t),\tau(t))=\theta_{q}(t),
\]
so $\theta_{q}$ is nondecreasing. Similar reasoning gives
\[
0\leq\theta_{q}(t)-\theta_{q}(s)\leq\int_{s}^{t}2|r|\int_{M}\left|Rc_{g_{r}}+\nabla^{2}f_{s}(r)-\dfrac{g_{r}}{2|r|}\right|^{2}\dfrac{e^{-f_{s}(r)}}{(4\pi|r|)^{\frac{n}{2}}}dg_{r}dr,
\]
but the integrand is bounded on any compact subset of $M\times(-1,0)$,
by the uniform estimates for $f\in\mathcal{F}_{q}$. Thus $\theta_{q}$
is locally Lipschitz. Moreover, $\theta_{q}(t)\leq0$ for all $t\in(-1,0)$
by Perelman's Harnack inequality, so we can define the heat kernel density function  $$\Theta(q):=\lim_{t\nearrow0}\theta_{q}(t).$$

Fix a sequence $t_{i}\nearrow0$, and consider the rescaled flows
$\widetilde{g}_{t}^{i}:=|t_{i}|^{-1}g_{t_{i}+|t_{i}|t}$, $t\in[-2,0]$,
and $\widetilde{f}_{i}(t):=f_{t_{i}}(t_{i}+|t_{i}|t)$. By the monotonicity
of $\theta_{q}$, we have $\lim_{i\to\infty}(\theta_{q}(t_{i})-\theta_{q}(t_{i}-\rho|t_{i}|))=0$
for any fixed $\rho>0$. Since 
\begin{align*}
0 & \leq\mathcal{W}(\widetilde{g}_{0}^{i},\widetilde{f}_{i}(0),1)-\mathcal{W}(\widetilde{g}_{-\rho}^{i},\widetilde{f}_{i}(-\rho),\tau(-\rho))\\
 & =\mathcal{W}(g_{t_{i}},f_{t_{i}}(t_{i}),|t_{i}|)-\mathcal{W}(g_{t_{i}-\rho|t_{i}|},f_{t_{i}}(t_{i}-\rho|t_{i}|),|t_{i}|+\rho|t_{i}|)\\
 & \leq\theta_{q}(t_{i})-\theta_{q}(t_{0}-\rho|t_{i}|),
\end{align*}
we may conclude that 
\begin{align} \label{eq:entmoot}
0 & =\lim_{i\to\infty}\mathcal{W}(\widetilde{g}_{0}^{i},\widetilde{f}_{i}(0),1)-\mathcal{W}(\widetilde{g}_{-\rho}^{i},\widetilde{f}_{i}(-\rho),1+\rho)\\
 & =\lim_{i\to\infty}\int_{-\rho}^{0}2(1+|t|)\int_{M}\left|Rc_{\widetilde{g}_{t}^{i}}+\nabla^{2}\widetilde{f}_{i}(t)-\dfrac{\widetilde{g}_t^i}{2(1+|t|)}\right|^{2}\dfrac{e^{-\widetilde{g}_t^i}}{(4\pi(1+|t|))^{-\frac{n}{2}}}d\widetilde{g}_{t}^{i}dt.
\end{align}

By the argument of Theorem 1.2 of \cite{bam2}, we know that, after
passing to a subsequence, $(M,\widetilde{g}_{0}^{i},q)$ converge
to a singular shrinking GRS, and $\widetilde{f}_{i}(0)$ converge
to the corresponding potential function. The only difference is that,
in \cite{bam2}, the soliton potential function is obtained from
limiting a fixed conjugate heat kernel based at the singular time,
whereas we are obtaining a soliton potential function from a sequence
in $\mathcal{F}_{q}$. The proof is almost exactly the same, since
the estimates for elements of $\mathcal{F}_{q}$ are uniform, but
we rewrite the relevant parts of the argument in \cite{bam2} here
for completeness, and because we would like to pass the heat kernel bounds of Lemma \ref{heatsing} to the limit.

By Bamler's compactness theorem (\thref{bamthm}), we can pass to
a subsequence so that $(M,\widetilde{g}_{0}^{i},q)$ converge to a
pointed singular space $(\mathcal{X},q_{\infty})=(X,d,\mathcal{R},g,q_{\infty})$,
with associated convergence scheme $\Phi_{i}:U_{i}\to V_{i}$. For
any $x\in\mathcal{R}$, we have $r:=r_{Rm}^{X}(x)>0$, so by Proposition
4.1 in \cite{bam2}, $r_{Rm}^{\widetilde{g}^i}(\Phi_{i}(x),0)>\frac{r}{2}$
for sufficiently large $i\in\mathbb{N}$. Because $|R_{\widetilde{g}_{0}^{i}}|\leq A$
and $\nu[\widetilde{g}_{-2}^i,4]\geq-A$, backwards pseudolocality
(Theorem 1.5 in \cite{BZ1}) gives $\alpha=c(n,A)>0$ such that $r_{Rm}(y,s)>\alpha r$
for all $(y,s)\in B_{\widetilde{g}^{i}}(\Phi_{i}(x),0,\alpha r)\times[-\alpha^{2}r^{2},0]$
for all $i\in\mathbb{N}$. By \thref{heatsing}, we have the uniform bounds
\[
\dfrac{1}{C}\exp\left(-\dfrac{Cd_{s}^{2}(y,q)}{|s|}\right)\leq e^{-f_{t_{i}}(y,s)}\leq C\exp\left(-\dfrac{d_{s}^{2}(y,q)}{C|s|}\right)
\]
for all $(y,s)\in[-1,0)$. This implies
\[
-\log C+\dfrac{\widetilde{d}_{t}^{2}(y,q)}{C(1-t)}\leq\widetilde{f}_{i}(t)\leq\log C+C\dfrac{\widetilde{d}_{t}^{2}(y,q)}{(1-t)}
\]
for all $t\in[-2,0]$, hence (because $\Phi_{i}$ is a $\epsilon_{i}$-Gromov-Hausdorff
map for some sequence $\epsilon_{i}\to0$)
\[ \label{fboundssequential}
-\log C^{\ast}+\dfrac{d^{2}(q_{\infty},x)}{C^{\ast}}\leq\widetilde{f}_{i}(\Phi_{i}(x),0)\leq\log C^{\ast}+C^{\ast}d^{2}(q_{\infty},x)
\]
for all $x\in U_{i}\cap B^{X}(q_{\infty},D_{i})$, where $D_{i}\to\infty$.
By parabolic regularity theory applied to $\widetilde{u}_{i}(t):=(4\pi(1-t))^{-\frac{n}{2}}e^{-\widetilde{f}_{i}(t)}$
on $B_{\widetilde{g}^{i}}(\Phi_{i}(x),0,\alpha r)\times[-\alpha^{2}r^{2},0]$,
we find that 
$$\limsup_{i\to\infty}\sup_{B_{\widetilde{g}^{i}}(\Phi_{i}(x),0,\frac{1}{2}\alpha r)\times[-\frac{1}{2}\alpha^{2}r^{2},0]}|\nabla^{k}\widetilde{u}_{i}|_{\widetilde{g}^i}>0$$
for all $k\in\mathbb{N}$. Along with the locally uniform upper bound
for $\widetilde{f}_{i}$, we get similar bounds for $\widetilde{f}_{i}$,
so that we can pass to a subsequence such that $\widetilde{f}_{i}(0)$
converges in $C_{loc}^{\infty}$ to some $f_{\infty}\in C^{\infty}(\mathcal{R})$.
Suppose by way of contradiction that there exists $x^{*}\in\mathcal{R}$
such that 
\[
\left|Rc_{g_{\infty}}+\nabla^{2}f_{\infty}-\dfrac{g_{\infty}}{2}\right|^{2}(x^{*})\geq c_{0}>0.
\]
Then this quantity is at least $\frac{1}{2}c_{0}$ on some ball $B^{X}(x^{*},r)\subseteq\mathcal{R}$,
so for $x\in B_{\widetilde{g}_0^i}(\Phi_{i}(x^{*}),\frac{1}{2}r)$
and sufficiently large $i$, we have 
\[
\left|Rc_{\widetilde{g}_{0}^{i}}+\nabla^{2}\widetilde{f}_{i}(0)-\dfrac{\widetilde{g}_{0}^{i}}{2}\right|^{2}(x)\geq\frac{c_{0}}{4}.
\]
However, this along with backwards pseudolocality and parabolic regularity
give 
\[
\left|Rc_{\widetilde{g}_{t}^{i}}+\nabla^{2}\widetilde{f}_{i}(t)-\dfrac{\widetilde{g}_{t}^{i}}{2(1+|t|)}\right|^{2}(x)\geq\delta
\]
for $(x,t)\in B_{\widetilde{g}^{i}}(\phi_{i}(x^{\ast}),0,\delta)\times[-\delta^{2},0]$,
where $\delta>0$ is small, (depending on $x^{\ast}$ but not on $i$)
contradicting \eqref{eq:entmoot}. The estimate (\ref{fboundssequential}) passes to the limit to give
\begin{equation} \label{fboundslimit} -\log C^{\ast} +\frac{d^2(q_{\infty},x)}{C^{\ast}}\leq f_{\infty}(x)\leq \log C^{\ast} +C^{\ast}d^2(q_{\infty},x)\end{equation}
for all $x\in \mathcal{R}$.\\

\section{Integration by Parts on the Singular Ricci Soliton}

\vspace{4mm}
Now let $(X,d,\mathcal{R},g_{\infty},f_{\infty})$ be a singular shrinking
GRS as obtained in the previous section.
\begin{lem}
There exists $T=T(A,n)<\infty$ such that, for all $r>0$, we have
\[
|B^{X}(q_{\infty},r)\cap\mathcal{R}|\leq Tr^{n}.
\]
\end{lem}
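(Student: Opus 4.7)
The plan is to pass a uniform volume bound from the pre-limit rescaled Ricci flows to the singular shrinker via a further Type-I rescaling, using Proposition \ref{volumebounds}. Recall that $(\mathcal{X}, g_\infty, q_\infty)$ arises as a Cheeger-Gromov-Bamler limit of $(M, \widetilde{g}^i_t = |t_i|^{-1} g_{t_i + |t_i|t}, q_i)$ with convergence scheme $\Phi_i: U_i \to V_i$. For each fixed $r>0$, I would introduce the doubly-rescaled flow
\[
\bar{g}^{i,r}_t := r^{-2}\, \widetilde{g}^i_{r^2 t} = (|t_i|r^2)^{-1} g_{t_i + |t_i| r^2 t}
\]
on $[-2, 0]$, which is well-defined once $i$ is large enough that $|t_i|(1+2r^2) \leq 2$. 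The key observation is that a ball of radius $r$ in $\widetilde{g}^i_0$ becomes a ball of radius $1$ in $\bar{g}^{i,r}_0$, while a direct computation using $|R|(\cdot,s) \leq A|s|^{-1}$ yields $|R_{\bar{g}^{i,r}}|(y,t) \leq A/(r^{-2} + |t|) \leq A/|t|$ for $t \in [-2,0)$, with the same constant $A$ regardless of $r$.

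Next I would verify $\nu[\bar{g}^{i,r}_{-2}, 4] \geq -A$ for $i$ large by combining the scale-invariance $\mu[\lambda g, \tau] = \mu[g, \tau/\lambda]$ with Perelman's monotonicity $\mu[g_\alpha, \beta] \geq \mu[g_{-2}, \alpha + \beta + 2]$. After unwinding the two rescalings, the relevant quantity becomes $2 + |t_i|(r^2(s-2) - 1)$ for $s \in (0, 4]$; this is an $O(|t_i|)$ perturbation of $2$, so for each fixed $r$ it lies in $(0, 4]$ for $i$ large, and the hypothesis $\nu[g_{-2}, 4] \geq -A$ carries through. Applying Proposition \ref{volumebounds} case $(ii)$ at scale $1$ to $\bar{g}^{i,r}_0$ then gives $|\bar{B}^{i,r}(x, 0, 1)|_{\bar{g}^{i,r}_0} \leq C(A, n)$, which translates back to $|B_{\widetilde{g}^i_0}(x, r)|_{\widetilde{g}^i_0} \leq C(A, n)\, r^n$ for any $x \in M$.

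To conclude, I would pass this bound to the limit. Since $\Phi_i^\ast \widetilde{g}^i_0 \to g_\infty$ in $C^\infty_{loc}(\mathcal{R})$ and the $\Phi_i$ are Gromov-Hausdorff approximations, for any compact $K \subset B^X(q_\infty, r) \cap \mathcal{R}$ we have $\Phi_i(K) \subset B_{\widetilde{g}^i_0}(\Phi_i(q_\infty'), r + \epsilon_i)$ with $\epsilon_i \to 0$, together with $|K|_{g_\infty} = \lim_i |\Phi_i(K)|_{\widetilde{g}^i_0}$. Exhausting $B^X(q_\infty, r) \cap \mathcal{R}$ by such $K$ gives $|B^X(q_\infty, r) \cap \mathcal{R}| \leq C(A,n)\, r^n$, so $T = C(A,n)$ works. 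The main technicality is the $\nu$-bound under the double rescaling: although the interval $[-2,0]$ in $\bar{g}^{i,r}$ corresponds to the potentially large interval $[-|t_i|(1+2r^2), 0]$ in the original flow, the factor $|t_i| \to 0$ ensures this interval collapses to $\{0\}$ for each fixed $r$ as $i \to \infty$, so Perelman's monotonicity is only invoked over arbitrarily short times and the original entropy bound survives unchanged.
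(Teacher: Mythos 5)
Your proposal is correct and takes essentially the same approach as the paper: apply Proposition \ref{volumebounds}$(ii)$ to obtain a scale-invariant volume upper bound on the rescaled flows $\widetilde g^i_0$, then pass the bound through the convergence scheme to the limit. The only difference is cosmetic — the paper applies the proposition once to the original flow $g_t$ at scales $\rho \le 1$ and then notes that under the Type-I rescaling the valid range of radii grows to $|t_i|^{-1/2}$, whereas you introduce an explicit second rescaling by $r^2$ and re-verify the hypotheses of the proposition for each fixed $r$; both routes exploit the same scale-invariance and arrive at $|B_{\widetilde g^i_0}(x,r)| \le C(A,n)r^n$ for $i$ large.
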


\begin{proof}
By Proposition 6, we obtain $C=C(A)<\infty$ such that $|B(x,t,r)|_{t}\leq Cr^{n}$
for all $r\in(0,1]$ and $(x,t)\in M\times[-1,0)$. For the rescaled
flows $\widetilde{g}_t^i:=|t_{i}|^{-1}g_{t_i+|t_i|t}$, this
means that $|B_{\widetilde{g}^i}(x,t,r)|_{\widetilde{g}_t^i}\leq Cr^{n}$
for all $r\in(0,|t_{i}|^{-\frac{1}{2}})$ and $(x,t)\in M\times[-2|t_{i}|^{-1},0)$.
Now let $(U_{i},V_{i},\Phi_{i})$ be a convergence scheme for the
convergence $(M,\widetilde{g}_0^i,q)\to(\mathcal{X},q_{\infty})$.
Let $K$ be any compact subset of $B^{X}(q_{\infty},r)\cap\mathcal{R}$.
Then, for sufficiently large $i\in\mathbb{N}$, we have $K\subseteq U_{i}$
and 
\[
|K|\leq 2|\varphi_{i}(K)|_{\widetilde{g}_0^i}\leq2|B_{\widetilde{g}^i}(q,0,2r)|_{\tilde{g}_0^i}\leq2^{n+1}Cr^{n}.
\]
Since $K$ was arbitrary, this means $|B^{X}(q_{\infty},r)\cap\mathcal{R}|\leq2^{n+1}Cr^{n}$. 
\end{proof}
\begin{defn}The shrinker entropy of the singular shrinking GRS $(X,d,\mathcal{R},g_{\infty},f_{\infty})$
is
\[
\mathcal{W}(g_{\infty},f_{\infty}):=\int_{\mathcal{R}}(R_{g_{\infty}}+|\nabla f_{\infty}|^{2}+f_{\infty}-n)(4\pi)^{-\frac{n}{2}}e^{-f_{\infty}}dg_{\infty}.
\]
\end{defn}
This integral is finite by the previous lemma, since $|R_{\infty}|$
is bounded, $f_{\infty}$ has quadratic growth, and $|\nabla f_{\infty}|^2 \leq R+|\nabla f_{\infty}|^2 =f_{\infty}-C$ for some constant $C \in \mathbb{R}$.

In order to prove convergence of entropy, it is essential to use Perelman's
differential Harnack inequality, so that the entropy can be rewritten
as the integral of a nonpositive quantity. However, it is then necessary
to prove that the integration by parts formula 
\[
\int_{\mathcal{R}}\Delta f_{\infty}e^{-f_{\infty}}dg_{\infty}=\int_{\mathcal{R}}|\nabla f_{\infty}|^{2}e^{-f_{\infty}}dg_{\infty}
\]
holds in the singular case. This is equivalent to showing that 
\[
\int_{\mathcal{R}}\mbox{div}(\nabla e^{-f_{\infty}})dg_{\infty}=0.
\]
To this end, we recall the following integration by parts formula
\begin{lem}
(\cite{bam1} Prop 5.2) Let $\mathcal{X}=(X,d,\mathcal{R},g)$ be
a singular space with singularities of codimension $p_{0}>2$, and
$Z$ a $C^{1}$ vector field on $\mathcal{R}$ that vanishes on $\mathcal{R}\setminus B(x,r)$
for some large $r>0$. Assume there is a constant $C<\infty$ such
that 
\[
|Z|<Cr_{Rm}^{-1}\hspace{6mm}\mbox{ and }\hspace{6mm}|\operatorname{div}(Z)|<Cr_{Rm}^{-2}\hspace{6mm}\mbox{ on }\hspace{6mm}B(x,r)\cap\mathcal{R}.
\]
Then 
\[
\int_{\mathcal{R}}(\operatorname{div}Z)dg=0.
\]
\end{lem}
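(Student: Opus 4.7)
The plan is standard for integration-by-parts on spaces with small singular sets: approximate by cutting off in a tubular neighborhood of the singular stratum, apply the classical divergence theorem on the open Riemannian manifold $\mathcal{R}$, and use the codimension assumption $p_0>2$ to push the cutoff error to zero.

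Concretely, fix a smooth cutoff $\chi\colon\mathbb{R}\to[0,1]$ with $\chi\equiv 0$ on $(-\infty,1]$, $\chi\equiv 1$ on $[2,\infty)$, and $|\chi'|\leq 2$. The regularity radius $r_{Rm}$ is $1$-Lipschitz on $\mathcal{R}$, so it can be mollified to a smooth function $\rho$ on $\mathcal{R}$ with $\tfrac{1}{2}r_{Rm}\leq\rho\leq 2r_{Rm}$ and $|\nabla\rho|\leq 2$. Set $\eta_\epsilon:=\chi(\rho/\epsilon)$. Because $\eta_\epsilon$ vanishes on a neighborhood of $X\setminus\mathcal{R}$ and $Z$ is compactly supported in $X$, the product $\eta_\epsilon Z$ has compact support inside $\mathcal{R}$, so the usual divergence theorem on $(\mathcal{R},g)$ yields
\[
0 \;=\; \int_\mathcal{R}\operatorname{div}(\eta_\epsilon Z)\,dg \;=\; \int_\mathcal{R}\eta_\epsilon\operatorname{div}(Z)\,dg \;+\; \int_\mathcal{R}\langle\nabla\eta_\epsilon,Z\rangle\,dg.
\]

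The remaining task is to control each term as $\epsilon\to 0$. Fix any $p\in(2,p_0)$. The codimension-$p_0$ bound $|\{r_{Rm}<s\}\cap B(x,r)\cap\mathcal{R}|\leq Es^p r^{n-p}$ combined with a layer-cake computation shows $r_{Rm}^{-2}\in L^1(B(x,r)\cap\mathcal{R})$, since the critical exponent $p/2$ exceeds $1$. Together with $|\operatorname{div}(Z)|\leq Cr_{Rm}^{-2}$, this makes $\operatorname{div}(Z)$ integrable, so dominated convergence (with $\eta_\epsilon\nearrow 1$ pointwise on $\mathcal{R}$) gives $\int_\mathcal{R}\eta_\epsilon\operatorname{div}(Z)\,dg\to\int_\mathcal{R}\operatorname{div}(Z)\,dg$. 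For the cutoff error, $\nabla\eta_\epsilon$ is supported in $\{r_{Rm}\leq 4\epsilon\}$ with $|\nabla\eta_\epsilon|\leq 4/\epsilon$, and on this set $|Z|\leq C/r_{Rm}\leq C'/\epsilon$, hence
\[
\left|\int_\mathcal{R}\langle\nabla\eta_\epsilon,Z\rangle\,dg\right| \;\leq\; \frac{4C'}{\epsilon^{2}}\,\bigl|\{r_{Rm}\leq 4\epsilon\}\cap B(x,r)\cap\mathcal{R}\bigr| \;\leq\; C''\epsilon^{\,p-2} \;\longrightarrow\; 0.
\]

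The main technical point is arranging $\eta_\epsilon$ to be genuinely smooth with a workable gradient bound, which is why one mollifies $r_{Rm}$ rather than using it directly; alternatively one could appeal to a Lipschitz version of the divergence theorem via the coarea formula applied to a.e. level set of $r_{Rm}$. Once this is handled, the codimension hypothesis $p_0>2$ enters the argument in exactly two places --- securing the $L^1$-integrability of $\operatorname{div}(Z)$, and absorbing the $\epsilon^{-2}$ factor in the cutoff estimate --- and this requirement is sharp for the strategy above.
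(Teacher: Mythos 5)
The paper does not prove this lemma; it cites it directly from Bamler (\cite{bam1}, Proposition 5.2), so there is no in-text proof to compare against. That said, your proof is correct and is the standard cutoff argument one would expect: mollify $r_{Rm}$, truncate with $\eta_\epsilon=\chi(\rho/\epsilon)$ so that $\eta_\epsilon Z$ is compactly supported in $\mathcal{R}$, apply the classical divergence theorem, and use the codimension bound $|\{r_{Rm}<s\}\cap B(x,r)\cap\mathcal{R}|\lesssim s^{p}$ with $p\in(2,p_0)$ both to secure $\operatorname{div}(Z)\in L^1$ (via the layer-cake formula, since $p/2>1$) and to kill the cutoff error at rate $\epsilon^{p-2}$. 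All the small checks go through — $\nabla\eta_\epsilon$ is supported in $\{r_{Rm}\leq 4\epsilon\}$, and there $\rho>\epsilon$ gives $r_{Rm}>\epsilon/2$, so $|Z|\lesssim\epsilon^{-1}$, yielding the claimed $\epsilon^{-2}\cdot\epsilon^{p}$ estimate. This matches the strategy in Bamler's cited proposition.
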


The hypotheses of this lemma will follow from various identities for soliton potential functions.

\begin{lem}
$\int_{\mathcal{R}}\Delta f_{\infty}e^{-f_{\infty}}dg_{\infty}=\int_{\mathcal{R}}|\nabla f_{\infty}|_{g_{\infty}}^{2}e^{-f_{\infty}}dg_{\infty}$.
\end{lem}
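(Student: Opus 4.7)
The plan is to apply the integration-by-parts lemma of \cite{bam1} recorded above to a sequence of cut-offs of the vector field $Z_{0}:=e^{-f_{\infty}}\nabla f_{\infty}$ on $\mathcal{R}$, which satisfies the pointwise identity $\operatorname{div}(Z_{0})=e^{-f_{\infty}}(\Delta f_{\infty}-|\nabla f_{\infty}|^{2})$, and then pass to the limit as the cut-off is removed. My first task is to record pointwise bounds on $|\nabla f_{\infty}|$ and $\Delta f_{\infty}$. Tracing the shrinker equation on $\mathcal{R}$ gives $R_{g_{\infty}}+\Delta f_{\infty}=n/2$, so $\Delta f_{\infty}$ is bounded since $|R_{g_{\infty}}|\leq A$ there (the uniform bound $|R_{\widetilde{g}^{i}}|\leq A$ passes to the limit). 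The standard shrinker identity $R_{g_{\infty}}+|\nabla f_{\infty}|^{2}-f_{\infty}=\mathrm{const}$ on each connected component of $\mathcal{R}$, combined with the lower bound $f_{\infty}\geq -\log C^{\ast}$ from \eqref{fboundslimit}, then yields $|\nabla f_{\infty}|^{2}\leq f_{\infty}+C$.

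Next I choose a smooth cutoff $\phi:\mathbb{R}\to[0,1]$ with $\phi\equiv 1$ on $(-\infty,1]$, $\phi\equiv 0$ on $[2,\infty)$, and $|\phi'|\leq 2$, and set $\eta_{R}:=\phi(f_{\infty}/R^{2})\in C^{\infty}(\mathcal{R})$. By \eqref{fboundslimit}, $\operatorname{supp}(\eta_{R})\subseteq B^{X}(q_{\infty},CR)\cap\mathcal{R}$, so $Z_{R}:=\eta_{R}Z_{0}$ is smooth on $\mathcal{R}$ and vanishes outside $B^{X}(q_{\infty},CR)$. The bounds above together with $f_{\infty}\geq -\log C^{\ast}$ imply that $|Z_{R}|$ and $|\operatorname{div}(Z_{R})|$ are uniformly bounded on $\mathcal{R}$; since $r_{Rm}$ is bounded above on the support of $Z_{R}$ by the distance to the singular set of $\mathcal{X}$, hence by the diameter of the support, the growth conditions $|Z_{R}|<Cr_{Rm}^{-1}$ and $|\operatorname{div}(Z_{R})|<Cr_{Rm}^{-2}$ hold for $C$ sufficiently large. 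Combined with the codimension-$4$ singularities of $\mathcal{X}$ from \thref{bamthm}, the integration-by-parts lemma applies and gives $\int_{\mathcal{R}}\operatorname{div}(Z_{R})\,dg_{\infty}=0$, which rearranges to
\begin{equation*}
\int_{\mathcal{R}}\eta_{R}e^{-f_{\infty}}(|\nabla f_{\infty}|^{2}-\Delta f_{\infty})\,dg_{\infty}=\int_{\mathcal{R}}e^{-f_{\infty}}\langle\nabla f_{\infty},\nabla\eta_{R}\rangle\,dg_{\infty}.
\end{equation*}

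Finally, I pass $R\to\infty$. On the left-hand side, dominated convergence applies with dominant $e^{-f_{\infty}}(f_{\infty}+C)$, which is integrable against $dg_{\infty}$ by the Gaussian decay of $e^{-f_{\infty}}$ coming from \eqref{fboundslimit} and the volume bound $|B^{X}(q_{\infty},r)\cap\mathcal{R}|\leq Tr^{n}$ from the preceding lemma. For the right-hand side, $\nabla\eta_{R}=R^{-2}\phi'(f_{\infty}/R^{2})\nabla f_{\infty}$ is supported where $R^{2}\leq f_{\infty}\leq 2R^{2}$, forcing $e^{-f_{\infty}}\leq e^{-R^{2}}$ and $|\nabla f_{\infty}|^{2}\leq 2R^{2}+C$; as this region is contained in $B^{X}(q_{\infty},CR)\cap\mathcal{R}$ of volume $O(R^{n})$, the right-hand side is $O(R^{n-2}e^{-R^{2}})\to 0$, yielding the stated identity.

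The main obstacle I anticipate is not the limit itself but rather the verification of the $r_{Rm}^{-1}$ and $r_{Rm}^{-2}$ growth conditions of the integration-by-parts lemma on the support of $Z_{R}$. The key observation addressing this is that $|\nabla f_{\infty}|$ and $\Delta f_{\infty}$ admit pointwise bounds in terms of $f_{\infty}$ via the soliton identities together with the uniform scalar curvature bound on $\mathcal{R}$, so that $Z_{R}$ and $\operatorname{div}(Z_{R})$ are uniformly bounded on $\mathcal{R}$, and the growth conditions then hold on the bounded support of $Z_{R}$ for any sufficiently large constant.
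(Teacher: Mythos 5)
Your proposal is correct and follows essentially the same strategy as the paper: derive pointwise bounds on $|\nabla f_{\infty}|$ and $\Delta f_{\infty}$ from the soliton identities and the scalar curvature bound, cut off the vector field $\nabla e^{-f_{\infty}}$ to a compactly supported one, invoke Bamler's integration-by-parts lemma, and then remove the cutoff using the Euclidean volume bound and the Gaussian decay of $e^{-f_{\infty}}$. The one substantive difference is the cutoff: the paper uses a smoothed radial cutoff equal to $1$ on $B^{X}(q_{\infty},r)$ and supported in $B^{X}(q_{\infty},r+1)$, whereas you use $\eta_{R}=\phi(f_{\infty}/R^{2})$ built from sublevel sets of $f_{\infty}$. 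Your choice is arguably cleaner in that it avoids the mild technical issue of constructing a smooth radial cutoff with $|\nabla\phi|\leq 4$ on the singular space, and it makes $\nabla\eta_{R}$ parallel to $\nabla f_{\infty}$, which simplifies the error term; the paper's version controls the error by integrating over the annulus $B^{X}(q_{\infty},r+1)\setminus B^{X}(q_{\infty},r)$ instead. One small slip worth flagging: you justify boundedness of $r_{Rm}$ on $\operatorname{supp}(Z_{R})$ by saying it is bounded by the distance to the singular set, ``hence by the diameter of the support'' — the latter inference is false (the singular set need not meet the support); the correct point is simply that $r_{Rm}$ is $1$-Lipschitz and vanishes on $X\setminus\mathcal{R}$, so when the singular set is nonempty it is bounded on any bounded subset of $\mathcal{R}$, and when the singular set is empty the standard divergence theorem applies instead of Bamler's lemma. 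This does not affect the validity of your argument.
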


\begin{proof}
\noindent Now fix $r>0$, and let $\phi\in C^{\infty}(\mathcal{R})$
be a smoothing of a radial function, chosen such that $\phi|B(q_{\infty},r)=1$,
$0\leq\phi\leq1$, $|\nabla\phi|\leq 4$, and $\text{supp}(\phi)\subseteq B^{X}(q_{\infty},r+1).$
We want to apply the previous lemma to $Z:=\phi\nabla e^{-f_{\infty}}$.
Note that $R_{g_{\infty}} \geq0$ since $R_{g_t}$ is uniformly bounded
below. Also, the bound $|R|(x,t)\leq A|t|^{-1}$ passes to the limit
to give $R_{g_{\infty}}\leq A$. We know that $R_{g_{\infty}}+|\nabla f_{\infty}|^{2}-f_{\infty}=C$
for some constant $C\in\mathbb{R}$. For the purpose of this section,
we may assume that $C=0$, so that $f_{\infty}\geq0$ and $|\nabla f_{\infty}|^{2}\leq f_{\infty}$.
Also, $R_{g_{\infty}}+\Delta f_{\infty}=\frac{n}{2}$ implies that $|\Delta f_{\infty}|\leq A+\frac{n}{2}$. The quadratic growth estimates (\ref{fboundslimit}) give
\[
|Z(x)|\leq|\nabla f_{\infty}|e^{-f_{\infty}}\leq\left(\log C^{*}+C^{*}d^{2}(x,q_{\infty})\right)^{\frac{1}{2}}\exp\left(\log C^{*}-\frac{1}{C^{*}}d^{2}(q_{\infty},x)\right),
\]
\begin{align*}
|\operatorname{div}(Z(x))| & \leq2(|\nabla f_{\infty}|+|\nabla f_{\infty}|^{2}+|\Delta f_{\infty}|)e^{-f_{\infty}}\\
 & \leq4(\log C^{*}+C^{*}d^{2}(x,q_{\infty})+A+\frac{n}{2})\exp\left(\log C^{*}-\frac{1}{C^{*}}d^{2}(q_{\infty},x)\right),
\end{align*}
for $x\in\mathcal{R}.$ Both of these terms are locally bounded on
$\mathcal{R}$, so we may apply the previous lemma to $Z$ to obtain
$0=\int_{\mathcal{R}}\mbox{div}(\phi\nabla e^{-f_{\infty}})dg_{\infty}$.
Using the volume upper bound, we can conclude
\begin{align*}
\int_{\mathcal{R}}|\nabla\phi|\cdot|\nabla f_{\infty}|e^{-f_{\infty}}dg_{\infty}\leq & C(n)\int_{\mathcal{R}\cap(B^{X}(q_{\infty},r+1)\setminus B^{X}(q_{\infty},r))}|\nabla f_{\infty}|e^{-f_{\infty}}dg_{\infty}\\
\leq & C(n)\int_{\mathcal{R}\cap(B^{X}(q_{\infty},r+1)\setminus B^{X}(q_{\infty},r))}e^{-\frac{1}{2}f_{\infty}}dg_{\infty}\\
\leq & C(n,A)r^{n}\exp\left(-\frac{r^{2}}{C(n,A)}\right).
\end{align*}
The claim then follows by taking $r\to\infty$, and using the dominated
convergence theorem.
\end{proof}
\begin{cor}
\noindent The soliton entropy can also be expressed as
\[
\mathcal{W}(g_{\infty},f_{\infty})=(4\pi)^{-\frac{n}{2}}\int_{\mathcal{R}}(R_{g_{\infty}}+2\Delta f_{\infty}-|\nabla f_{\infty}|^{2}+f_{\infty}-n)dg_{\infty},
\]
which has nonpositive integrand by passing Perelman's differential Harnack inequality
to the limit.
\end{cor}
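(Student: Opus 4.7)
The plan is to derive the alternative expression as a direct consequence of the integration-by-parts identity established in the preceding lemma, and then to observe the nonpositivity of the integrand by passing Perelman's differential Harnack inequality to the limit, recycling the setup already built up at the end of Section 3.

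First, starting from the definition
\[
\mathcal{W}(g_{\infty},f_{\infty})=(4\pi)^{-\frac{n}{2}}\int_{\mathcal{R}}(R_{g_{\infty}}+|\nabla f_{\infty}|^{2}+f_{\infty}-n)e^{-f_{\infty}}dg_{\infty},
\]
I would rewrite the middle term using $|\nabla f_{\infty}|^{2}=(2\Delta f_{\infty}-|\nabla f_{\infty}|^{2})-2(\Delta f_{\infty}-|\nabla f_{\infty}|^{2})$. This replaces $|\nabla f_{\infty}|^{2}$ by $2\Delta f_{\infty}-|\nabla f_{\infty}|^{2}$ in the integrand at the cost of the error term $-2(4\pi)^{-n/2}\int_{\mathcal{R}}(\Delta f_{\infty}-|\nabla f_{\infty}|^{2})e^{-f_{\infty}}dg_{\infty}$, which vanishes by the preceding integration-by-parts lemma. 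To legitimately apply that lemma, I would separately verify absolute convergence of $\int_{\mathcal{R}}\Delta f_{\infty}\,e^{-f_{\infty}}dg_{\infty}$ and $\int_{\mathcal{R}}|\nabla f_{\infty}|^{2}e^{-f_{\infty}}dg_{\infty}$; this is routine given $|\Delta f_{\infty}|\leq A+n/2$ (from $R_{g_{\infty}}+\Delta f_{\infty}=n/2$ together with $0\leq R_{g_{\infty}}\leq A$), $|\nabla f_{\infty}|^{2}\leq f_{\infty}$, the quadratic growth estimate \eqref{fboundslimit}, and the volume bound $|B^{X}(q_{\infty},r)\cap\mathcal{R}|\leq Tr^{n}$ from the earlier lemma of this section.

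For the nonpositivity of the integrand, I would rely on the sequential Perelman Harnack inequality recorded in Section 3, namely
\[
\tau(R_{g_{s}}+2\Delta f_{t_{i}}-|\nabla f_{t_{i}}|^{2})+f_{t_{i}}-n\leq 0,\qquad \tau=|s|,
\]
which holds for each $f_{t_{i}}\in\mathcal{F}_{q}$ on the original unrescaled flow. Under the Type-I rescaling $\widetilde{g}_{t}^{i}=|t_{i}|^{-1}g_{t_{i}+|t_{i}|t}$ and $\widetilde{f}_{i}(t)=f_{t_{i}}(t_{i}+|t_{i}|t)$, the scaling of $R$, $\Delta$, and $|\nabla\cdot|^{2}$ turns this into
\[
(1+|t|)(R_{\widetilde{g}_{t}^{i}}+2\Delta\widetilde{f}_{i}-|\nabla\widetilde{f}_{i}|^{2})+\widetilde{f}_{i}-n\leq 0.
\]
Evaluating at $t=0$ and passing to the limit along the convergence scheme $\Phi_{i}:U_{i}\to V_{i}$, using the $C^{\infty}_{\mathrm{loc}}$ convergence $\widetilde{f}_{i}(0)\to f_{\infty}$ on $\mathcal{R}$ together with $\Phi_{i}^{\ast}\widetilde{g}_{0}^{i}\to g_{\infty}$ on compact subsets of $\mathcal{R}$, all furnished by the construction preceding Section 4, the inequality descends to the pointwise bound $R_{g_{\infty}}+2\Delta f_{\infty}-|\nabla f_{\infty}|^{2}+f_{\infty}-n\leq 0$ on $\mathcal{R}$.

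I do not anticipate a serious obstacle: the rewriting is purely algebraic once the integration-by-parts identity is in hand, and the limiting Harnack inequality is an immediate corollary of the smooth convergence on the regular part already established in Section 3. The only minor care needed is the absolute-convergence check required to justify splitting the combined integrand before invoking the lemma.
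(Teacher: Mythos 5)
Your proposal is correct and matches the paper's approach: the paper states this corollary immediately after the integration-by-parts lemma with no separate proof, as the rewriting is purely algebraic (add and subtract $2\Delta f_\infty - 2|\nabla f_\infty|^2$, with the error term killed by that lemma), and the nonpositivity comes from the sequential Harnack inequality for $f_{t_i}\in\mathcal{F}_q$ under Type-I rescaling, exactly as you carry it out. One point worth flagging: the algebra you perform produces the weight $e^{-f_\infty}$ in the rewritten integral, which must be there for the identity to hold (and is indeed present in the corresponding integral in the proof of Theorem \ref{entcon}); the displayed formula in the corollary as printed omits this factor, which appears to be a typo in the paper rather than a defect of your argument.
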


\section{Proof of Entropy Convergence}
\begin{thm} \thlabel{entcon} Suppose $(M^n,(g_t)_{t\in [-2,0)},q)$ is a closed, pointed solution of Ricci flow satisfying $\nu[g_{-2},4]\geq -A$ and
$$|R(\cdot,t)|\leq \frac{A}{|t|}$$
for all $t\in [-2,0)$. Let $(\mathcal{X},q_{\infty})=(X,d,\mathcal{R},g_{\infty},q_{\infty})$ be a singular space obtained as a pointed limit of $(M,\widetilde{g}_0^i,q)$, where $t_i \nearrow 0$, and $\widetilde{g}_t^i:=|t_i|^{-1}g_{t_i+|t_i|t}$. Also assume $f_{\infty}\in C^{\infty}(\mathcal{R})$ is obtained by limiting $\widetilde{f}_i(0)$ as in Section 3, where $f_i(t):=f_{t_i}(t_i +|t_i|t)$, and $f_{t_i}\in \mathcal{F}_q$ satisfy $\theta_q(t_i)=\mathcal{W}(g_{t_i},f_{t_i},|t_i|)$. Then
\[
\Theta(q)=\lim_{i\to\infty}\mathcal{W}(\widetilde{g}_0^i,\widetilde{f}_{i}(0),1)=\mathcal{W}(g_{\infty},f_{\infty}).
\]
\end{thm}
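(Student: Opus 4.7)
The plan is to separate the two equalities. The first, $\Theta(q) = \lim_i \mathcal{W}(\widetilde{g}_0^i, \widetilde{f}_i(0), 1)$, is immediate from the scale invariance of Perelman's $\mathcal{W}$-functional: by construction $\widetilde{f}_i(0) = f_{t_i}(t_i)$ and $\widetilde{g}_0^i = |t_i|^{-1} g_{t_i}$, so
\[
\mathcal{W}(\widetilde{g}_0^i, \widetilde{f}_i(0), 1) = \mathcal{W}(g_{t_i}, f_{t_i}(t_i), |t_i|) = \theta_q(t_i),
\]
since $f_{t_i}$ realizes the infimum defining $\theta_q(t_i)$; and $\theta_q(t_i) \to \Theta(q)$ by the definition of $\Theta$.

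For the second equality $\lim_i \mathcal{W}(\widetilde{g}_0^i, \widetilde{f}_i(0), 1) = \mathcal{W}(g_\infty, f_\infty)$, my approach would be to rewrite both sides in Perelman's Harnack form, so the integrand is non-positive. On each smooth flow, integration by parts gives
\[
\mathcal{W}(\widetilde{g}_0^i, \widetilde{f}_i(0), 1) = (4\pi)^{-n/2}\int_M v_i \, e^{-\widetilde{f}_i(0)}\, d\widetilde{g}_0^i,
\]
where $v_i := R_{\widetilde{g}_0^i} + 2\Delta \widetilde{f}_i(0) - |\nabla \widetilde{f}_i(0)|^2 + \widetilde{f}_i(0) - n \leq 0$ by Perelman's differential Harnack inequality; on the limit space the analogous expression with non-positive integrand $v_\infty e^{-f_\infty}$ is the final Corollary of Section 4. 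Setting $u_i := (4\pi)^{-n/2} e^{-\widetilde{f}_i(0)}$ and $u_\infty := (4\pi)^{-n/2} e^{-f_\infty}$, the convergence scheme $(U_i, V_i, \Phi_i)$ provides $\Phi_i^*(v_i u_i) \Phi_i^* d\widetilde{g}_0^i \to v_\infty u_\infty \, dg_\infty$ smoothly on any compact $K \subset \mathcal{R}$. Since both integrands are non-positive, one has $\int_M v_i u_i \leq \int_{\Phi_i(K)} v_i u_i \to \int_K v_\infty u_\infty$ as $i\to\infty$; exhausting $\mathcal{R}$ by such $K$ (using the integrability afforded by the volume bound at the start of Section 4) yields the upper bound $\limsup_i \mathcal{W}(\widetilde{g}_0^i, \widetilde{f}_i(0), 1) \leq \mathcal{W}(g_\infty, f_\infty)$.

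The main obstacle is the reverse inequality, which is equivalent to showing $\int_{M \setminus V_i} |v_i| u_i \, d\widetilde{g}_0^i \to 0$, i.e., that no mass of the Harnack integrand is lost either to spatial infinity or to the singular set $X \setminus \mathcal{R}$. I would split $M \setminus V_i$ into a far region $\{\widetilde{d}_0(\cdot, q) > R\}$ and a bounded near-singular region $B(q, R) \cap \{r_{Rm} < \alpha\}$. On the far region, the Gaussian upper bound $e^{-\widetilde{f}_i(0)} \leq C e^{-\widetilde{d}_0^2/C}$ from Lemma \ref{heatsing}, combined with a polynomial pointwise bound $|v_i| \leq C(1 + \widetilde{d}_0^4)(1 + r_{Rm}^{-2})$ obtained from local parabolic regularity on $r_{Rm}$-scale balls together with the quadratic growth of $\widetilde{f}_i(0)$, and the volume upper bound from Proposition \ref{volumebounds}, makes the contribution tend to $0$ as $R \to \infty$ uniformly in $i$. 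On the bounded near-singular region, the crucial input is Bamler's codimension-$4$ integral estimate from \thref{bamthm}: for any $\epsilon \in (0, 1)$,
\[
\int_{B(q,R) \cap \{r_{Rm} < \alpha\}} r_{Rm}^{-2} \, d\widetilde{g}_0^i \leq \alpha^{2-2\epsilon} \int_{B(q,R)} r_{Rm}^{-4+2\epsilon} \, d\widetilde{g}_0^i \leq C(R,\epsilon)\, \alpha^{2-2\epsilon},
\]
which tends to $0$ as $\alpha \to 0$. Combining these tail estimates with the smooth convergence of $v_i u_i$ on compact subsets of $\mathcal{R}$ yields $\liminf_i \mathcal{W}(\widetilde{g}_0^i, \widetilde{f}_i(0), 1) \geq \mathcal{W}(g_\infty, f_\infty)$, completing the proof.
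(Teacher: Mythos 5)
Your first equality and the inequality $\Theta(q)\leq\mathcal{W}(g_{\infty},f_{\infty})$ are correct and match the paper's argument: the Harnack form with nonpositive integrand, convergence on compacts, and the integration-by-parts identity of Section 4 to express $\mathcal{W}(g_{\infty},f_{\infty})$ in the Harnack form.

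The gap is in the reverse inequality, and it is concentrated in the far-region tail estimate. You assert a pointwise bound $|v_i|\leq C(1+\widetilde{d}_0^4)(1+r_{Rm}^{-2})$ coming from ``local parabolic regularity on $r_{Rm}$-scale balls together with the quadratic growth of $\widetilde{f}_i(0)$.'' This is not what parabolic regularity gives. Writing $u=(4\pi)^{-n/2}e^{-f}$, the second-order quantity $\Delta\widetilde{f}_i=-\Delta u/u+|\nabla u|^2/u^2$ is controlled through $\sup_{B(x,\rho)}u\,/\,u(x)$ with $\rho\sim r_{Rm}(x)$. Lemma \ref{heatsing} gives only $c_1 e^{-C\,\widetilde{d}^2}\leq u\leq c_2 e^{-\widetilde{d}^2/C}$ with a fixed $C=C(A,n)>1$, so that ratio is bounded only by $C' e^{(C-1/C)\widetilde{d}^2}$; the mismatch in the Gaussian constants means the resulting estimate on $|\Delta\widetilde{f}_i|$ is exponential, not polynomial, in $\widetilde{d}^2$. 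Multiplying by $u_i\leq Ce^{-\widetilde{d}^2/C}$ does not cure this (the net exponent is $(C-2/C)\widetilde{d}^2$, which need not be negative), so the far-region contribution $\int_{\{\widetilde{d}_0>R\}}|v_i|u_i$ is not controlled by the estimates at your disposal. Your near-singular-region argument via Bamler's integral bound $\int r_{Rm}^{-4+2\epsilon}\leq C$ and the pointwise bound $|v_i|\leq C(R)\,r_{Rm}^{-2}$ on $B(q,R)$ is sound (there the Gaussian ratio is bounded since $\widetilde{d}$ is bounded), but it cannot rescue the far region.

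The paper circumvents precisely this problem by not using the Harnack form for the reverse inequality. It switches to the other expression for the entropy integrand, $w_i:=R_{\widetilde{g}_0^i}+|\nabla\widetilde{f}_i(0)|^2+\widetilde{f}_i(0)-n$, which has $\int_M w_i u_i\,d\widetilde{g}_0^i=\int_M v_i u_i\,d\widetilde{g}_0^i$ but, unlike $v_i$, contains no $\Delta f$ term. The uniform quadratic growth lower bound $\widetilde{f}_i(0)\geq\widetilde{d}_0^2/C^{\ast}-\log C^{\ast}$ together with $|R|\leq A$ immediately gives $w_i\geq 0$ on $M\setminus B_{\widetilde{g}^i}(q,0,D)$ for a uniform $D$, so the far-region contribution is automatically nonnegative and requires no tail estimate whatsoever. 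On $B_{\widetilde{g}^i}(q,0,D)$ the same inequalities yield a uniform pointwise lower bound $w_i u_i\geq -C$, and then the Minkowski codimension-$4$ bound $|\{r_{Rm}<s\}\cap B(q,0,2D)|\leq Es^3$ makes the near-singular contribution small. If you want to complete your approach along the lines you outline, replace $v_i$ by $w_i$ in the reverse inequality; the far region then becomes trivial, and your near-singular estimate (or the paper's measure-theoretic one) finishes the argument.
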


\begin{proof}
The first equality is by definition. Let $(U_{i},V_{i},\Phi_{i})$
be the convergence scheme for $(M,\widetilde{g}_0^i,q)\to(\mathcal{X},q_{\infty})$.
Then, for any compact subset $K\subseteq\mathcal{R}$, we have for
large enough $i\in\mathbb{N}$ that 
\begin{align*}
\int_{K}(R_{g_{\infty}}+2\Delta f_{\infty}- & |\nabla f_{\infty}|^{2}+f_{\infty}-n)(4\pi)^{-\frac{n}{2}}dg_{\infty}\\
 & =\lim_{i\to\infty}\int_{\Phi_{i}(K)}(R_{\widetilde{g}_0^i}+2\Delta\widetilde{f}_{i}(0)-|\nabla\widetilde{f}_{i}(0)|^{2}+\widetilde{f}_{i}(0)-n)(4\pi)^{-\frac{n}{2}}d\widetilde{g}_0^i\\
 & \geq\limsup_{i\to\infty}\mathcal{W}(\widetilde{g}_0^i,\widetilde{f}_{i}(0),1).
\end{align*}
Taking the infimum over all compact subsets $K\subseteq\mathcal{R}$
gives $\mathcal{W}(g_{\infty},f_{\infty})\geq\Theta(q)$. Now fix
$\epsilon>0$, and choose $K\subseteq\mathcal{R}$ compact such that
\[
(4\pi)^{-\frac{n}{2}}\int_{\mathcal{R}\setminus K}|R_{g_{\infty}}+|\nabla f_{\infty}|^{2}+f_{\infty}-n|e^{-f_{\infty}}dg_{\infty}<\epsilon.
\]
Then, for any $K'\subseteq\mathcal{R}$ compact with $K\subseteq K'$,
we have 
\begin{align*}
\mathcal{W}(g_{\infty},f_{\infty}) & \leq\int_{K'}(R_{g_{\infty}}+|\nabla f_{\infty}|^{2}+f_{\infty}-n)(4\pi)^{-\frac{n}{2}}e^{-f_{\infty}}dg_{\infty}+\epsilon\\
 & =\lim_{i\to\infty}\int_{\Phi_{i}(K')}(R_{\widetilde{g}_0^i}+|\nabla\widetilde{f}_{i}(0)|^{2}+\widetilde{f}_{i}(0)-n)(4\pi)^{-\frac{n}{2}}e^{-\widetilde{f}_{i}(0)}d\widetilde{g}_0^i+\epsilon.
\end{align*}
In order to show $\mathcal{W}(g_{\infty},f_{\infty})\leq\Theta(q)$,
it therefore suffices to find some $K'\subseteq\mathcal{R}$ compact
(possibly depending on $\epsilon$) with $K\subseteq K'$ and 
\[
\liminf_{i\to\infty}\int_{M\setminus\Phi_{i}(K')}(R_{\widetilde{g}_0^i}+|\nabla\widetilde{f}_{i}(0)|^{2}+\widetilde{f}_{i}(0)-n)(4\pi)^{-\frac{n}{2}}e^{-\widetilde{f}_{i}(0)}d\widetilde{g}_0^i>-\epsilon.
\]
Since $\widetilde{f}_{i}(0)$ have uniform quadratic growth, and because
$|R_{\widetilde{g}_0^i}|\leq A$, we can find $D=D(A,n)<\infty$
uniform such that 
\[
R_{\widetilde{g}_0^i}+|\nabla\widetilde{f}_{i}(0)|^{2}+\widetilde{f}_{i}(0)-n\geq0\hspace{6mm}\mbox{ on }\hspace{6mm}M\setminus B_{\widetilde{g}^i}(q,0,D)
\]
for all $i\in\mathbb{N}$. Moreover, Bamler's upper bound (\thref{bamthm}) on the size of the quantitative singular set gives us $E=E(A,n)<\infty$
such that 
\[
|\{r_{Rm}^{\widetilde{g}^i}(\cdot,0)>s\}\cap B_{\widetilde{g}_0^i}(q,2D)|_{\tilde{g}_0^i}\leq Es^{3}
\]
for all $s\in(0,1]$.

We also know that the entropy integrand is bounded uniformly from
below on $B_{\widetilde{g}^i}(q,0,D)$, and that for $i\in\mathbb{N}$
sufficiently large we have 
\[
\{r_{Rm}^{\widetilde{g}^i}(\cdot,0)\geq s\}\cap B_{\widetilde{g}^i}(q,0,2D)\subseteq V_{i}.
\]
Thus we can choose $s=s(A,n,\epsilon)>0$ sufficiently small so that
\[
\int_{\{r_{Rm}^{\widetilde{g}^i}(\cdot,0)<s\}\cap B_{\widetilde{g}^i}(q,0,2D)}(R_{\widetilde{g}_0^i}+|\nabla\widetilde{f}_{i}(0)|^{2}+\widetilde{f}_{i}(0)-n)(4\pi)^{-\frac{n}{2}}e^{-\widetilde{f}_{i}(0)}d\mu_{\widetilde{g}_0^i}>-\epsilon.
\]
Finally, by the definition of a convergence scheme, we can choose
$K'\subseteq\mathcal{R}$ such that $K\subseteq K'$ and $\Phi_{i}(K)\supseteq\{r_{Rm}^{\widetilde{g}^i}(\cdot,0) \geq s\}\cap B_{\widetilde{g}^i}(q,0,2D)$
(in fact, this will follow by taking $K'=\overline{U}_{i}$ for some large $i\in\mathbb{N}$).
\end{proof}
\begin{defn} A singular GRS $(\mathcal{R},g,f)$ is called normalized if $$\int_{\mathcal{R}}(4\pi)^{-\frac{n}{2}}e^{-f}dg=1$$.\end{defn}
Recall that $R+|\nabla f|^{2}-f$ is some constant $c\in\mathbb{R}$ and $R+\Delta f=\frac{n}{2}$, we can write
\begin{align*}
\mathcal{W}(g,f)= & (4\pi)^{-\frac{n}{2}}\int_{\mathcal{R}}(R+2\Delta f-|\nabla f|^{2}+f-n)e^{-f}dg\\
= & (4\pi)^{-\frac{n}{2}}\int_{\mathcal{R}}(-R-|\nabla f|^{2}+f)e^{-f}dg=-c\int_{\mathcal{R}}(4\pi)^{-\frac{n}{2}}e^{-f}dg.
\end{align*}
That is, for a normalized soliton, we know $R+|\nabla f|^{2}=f-\mathcal{W}(g,f)$.
\begin{prop} \label{normy}
The singular shrinking GRS $(\mathcal{R},g_{\infty},f_{\infty})$ of \thref{entcon} is normalized: 
\[
\int_{\mathcal{R}}(4\pi)^{-\frac{n}{2}}e^{-f_{\infty}}dg_{\infty}=1.
\]
\end{prop}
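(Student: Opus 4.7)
The plan is to mirror the strategy used in the proof of \thref{entcon}, but for the simpler integrand $(4\pi)^{-n/2}e^{-f_\infty}$ rather than the full entropy integrand. The key input is that the rescaled functions $\widetilde{f}_i(0)$ already give probability measures on $M$: since each $f_{t_i} \in \mathcal{F}_q$ arises as the potential of a conjugate heat kernel $u_{q,0}\in\mathcal{U}_q$, one has $\int_M(4\pi|s|)^{-n/2}e^{-f_{t_i}(x,s)}dg_s(x)=1$ for all $s\in[-1,0)$, and the standard rescaling computation gives
\[
\int_M (4\pi)^{-\frac{n}{2}}e^{-\widetilde{f}_i(0)}d\widetilde{g}^i_0 = 1
\]
for every $i$. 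Passing to a compact subset $K\subseteq\mathcal{R}$ and using the convergence scheme together with $C^\infty_{loc}$ convergence gives
\[
\int_K (4\pi)^{-\frac{n}{2}}e^{-f_\infty}dg_\infty = \lim_{i\to\infty}\int_{\Phi_i(K)}(4\pi)^{-\frac{n}{2}}e^{-\widetilde{f}_i(0)}d\widetilde{g}^i_0 \le 1,
\]
so taking the supremum over $K$ yields the upper bound $\int_\mathcal{R}(4\pi)^{-n/2}e^{-f_\infty}dg_\infty \le 1$.

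For the reverse inequality, fix $\epsilon>0$; the task is to find a compact $K'\subseteq\mathcal{R}$ such that, for all sufficiently large $i$,
\[
\int_{M\setminus\Phi_i(K')}(4\pi)^{-\frac{n}{2}}e^{-\widetilde{f}_i(0)}d\widetilde{g}^i_0 < 2\epsilon.
\]
This mass can escape in two ways. First, \emph{mass at spatial infinity}: rescaling the Gaussian upper bound of \thref{heatsing} gives $e^{-\widetilde{f}_i(0)(x)}\le C\exp(-\widetilde{d}^i_0(q,x)^2/C)$ uniformly in $i$, and Proposition~\ref{volumebounds} applied to the rescaled flows provides a uniform volume bound $|\widetilde{B}(q,0,r)|_{\widetilde{g}^i_0}\le Cr^n$ for $r\in(0,|t_i|^{-1/2}]$, so choosing $D=D(A,n,\epsilon)$ sufficiently large makes the integral over $M\setminus\widetilde{B}(q,0,D)$ smaller than $\epsilon$. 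Second, \emph{mass near the singular set}: on $\widetilde{B}(q,0,2D)$ the function $\widetilde{f}_i(0)$ is bounded below by a constant depending on $D$, so $(4\pi)^{-n/2}e^{-\widetilde{f}_i(0)}$ is bounded above there, and Bamler's codimension-4 estimate (\thref{bamthm}) gives $|\{r_{Rm}^{\widetilde{g}^i}(\cdot,0)<s\}\cap\widetilde{B}(q,0,2D)|_{\widetilde{g}^i_0}\le E(A,n,D)s^3$; choosing $s=s(A,n,\epsilon)$ small forces the integral over $\{r_{Rm}^{\widetilde{g}^i}(\cdot,0)<s\}\cap\widetilde{B}(q,0,2D)$ to be less than $\epsilon$ as well.

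Finally, as in the proof of \thref{entcon}, by the definition of a convergence scheme, one can pick a compact $K'\subseteq\mathcal{R}$ (for instance $K'=\overline{U}_i$ for a sufficiently large fixed $i$) so that for all large $i$,
\[
\Phi_i(K') \supseteq \{r_{Rm}^{\widetilde{g}^i}(\cdot,0)\ge s\}\cap\widetilde{B}(q,0,D).
\]
Combining the two tail estimates with smooth convergence on $K'$ then yields
\[
\int_{K'}(4\pi)^{-\frac{n}{2}}e^{-f_\infty}dg_\infty = \lim_{i\to\infty}\int_{\Phi_i(K')}(4\pi)^{-\frac{n}{2}}e^{-\widetilde{f}_i(0)}d\widetilde{g}^i_0 \ge 1-2\epsilon,
\]
and letting $\epsilon\searrow 0$ completes the proof. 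The main point requiring care is ensuring that the tail-at-the-singular-set estimate is controlled uniformly in $i$, which is why one needs both Bamler's codimension-4 bound for the rescaled flows and the uniform lower bound on $\widetilde{f}_i(0)$ coming from \thref{heatsing}; neither bound individually is sufficient.
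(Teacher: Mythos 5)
Your proof proposal follows the same strategy as the paper's: establish the upper bound via compact exhaustion, and for the lower bound split the potential mass loss into a spatial-infinity tail (controlled by the Gaussian upper bound from \thref{heatsing} plus the volume bound from Proposition~\ref{volumebounds}) and a near-singular-set tail (controlled by the uniform upper bound on $e^{-\widetilde{f}_i(0)}$ together with Bamler's codimension-4 volume estimate), then pull back to a large compact $K'\subseteq\mathcal{R}$ via the convergence scheme. The reasoning and the key inputs are the same; only minor bookkeeping (e.g., $D$ versus $2D$, $\epsilon$ versus $\epsilon/2$) differs.
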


\begin{proof}
For any compact subset $K\subseteq\mathcal{R}$, we have 
\[
\int_{K}e^{-f_{\infty}}dg_{\infty}=\lim_{i\to\infty}\int_{\varphi_{i}(K)}e^{-\widetilde{f}_{i}(0)}d\widetilde{g}_0^i \leq(4\pi)^{\frac{n}{2}},
\]
so it suffices to prove that $\int_{\mathcal{R}}(4\pi)^{-\frac{n}{2}}e^{-f_{\infty}}dg_{\infty}\geq1$.
In fact, fix $\epsilon>0$. By the uniform volume upper bound (Proposition \ref{volumebounds}) and
heat kernel lower bound (Lemma \ref{heatsing}), we have some $D=D(\epsilon)<\infty$ such that
\begin{align*}
\int_{M\setminus B_{\widetilde{g}^i}(q,0,D)}(4\pi)^{-\frac{n}{2}}e^{-\widetilde{f}_{i}(0)}d\widetilde{g}_0^i\leq & C(n,A)\int_{M\setminus B_{\widetilde{g}^i}(q,0,D)}\exp\left(-\frac{1}{C}\widetilde{d}_{g_0^i}^{2}(q,x)\right)d\widetilde{g}_0^i\\
    = & C(n,A)\int_{D}^{\infty}\text{Area}_{\widetilde{g}^i}(\partial B_{\widetilde{g}^i}(q,0,r))e^{-\frac{r^{2}}{C}}dr\\
= & C(n,A)\int_{D}^{\infty}e^{-\frac{r^{2}}{C}}\dfrac{d}{dr}|B_{\widetilde{g}^i}(q,0,r)|_{\widetilde{g}_0^i}dr\\
\leq & C(n,A)\int_{D}^{\infty}r|B_{\widetilde{g}^i}(q,0,r)|_{\widetilde{g}_0^i}e^{-\frac{r^{2}}{C}}dr\\
\leq & C(n,A)\int_{D}^{\infty}r^{n+1}e^{-\frac{r^{2}}{C}}dr <\frac{\epsilon}{2}
\end{align*}
for all $i\in \mathbb{N}$. Moreover, since $e^{-f_{\infty}}$
is uniformly bounded on $B_{\widetilde{g}^i}(q,0,2D)$ (independently
of $i$) we also have 
\[
\int_{B_{\widetilde{g}^i}(q,0,2D)\setminus V_{i}}(4\pi)^{-\frac{n}{2}}e^{-\widetilde{f}_{i}(0)}d\mu_{\widetilde{g}_0^i}\leq C\left|\{r_{Rm}^{\widetilde{g}^i}(\cdot,0)>s\}\cap B_{\widetilde{g}^{i}}(q,0,2D)\right|_{\widetilde{g}_0^i}
\]
for any $s>0$, when sufficiently large $i$. By taking $s>0$ sufficiently
small, the upper bound on the size of the quantitative singular set
(as in the previous section) tells us that the right hand side is
less than $\frac{1}{2}\epsilon$. This means that 
\[
\int_{B_{\widetilde{g}^i}(q,0,2D)\cap V_{i}}(4\pi)^{-\frac{n}{2}}e^{-\widetilde{f}_{i}(0)}d\mu_{\widetilde{g}_0^i}\geq1-\epsilon
\]
for $i$ sufficiently large, hence 
\[
\int_{\mathcal{R}\cap B(q,2D)}(4\pi)^{-\frac{n}{2}}e^{-f_{\infty}}d\mu_{g_{\infty}}\geq1-2\epsilon,
\]
and the claim follows. $\square$
\end{proof}
\begin{rem} \thlabel{remarque}
As in the Type-I curvature case \cite{mant}, we note that Proposition \ref{normy} and the entropy convergence part of Theorem \ref{entcon} also hold if the sequence $\widetilde{f}_{i}$ is replaced by $f(\cdot,t_i+|t_i|t)$ for some fixed $f\in \mathcal{F}_q$. The equality $\mathcal{W}(g_{\infty},f_{\infty})=\Theta(q)$ could fail a priori in that setting, though equality will follow from the results of Section 6.
\end{rem}

\section{Entropy Rigidity of the Gaussian Soliton}

\vspace{4mm}

\noindent The following result extends Lemma 2.1 of \cite{naber} to
the setting of singular shrinking GRS. The proof of that lemma used
essentially the fact that the underlying Riemannian manifold is complete,
which in our setting is only true if the singular set $X\setminus\mathcal{R}$
is empty. However, we will see that the proof can be modified to work
when $X\setminus\mathcal{R}$ has singularities of codimension strictly
greater than 3, using the arguments of Claim 2.32 of \cite{bingspace2}.
In fact, the part of the following proof establishing the flow properties
of a function $f\in C^{\infty}(\mathcal{R})$ with $\nabla^{2}f=0$
is taken from this claim, but since the setting of \cite{bingspace2}
is somewhat different, we rewrite the part of this claim we need. 
\begin{prop} \label{naberextend}
Suppose $\mathcal{X}=(X,d,\mathcal{R},g,f_{i})$, $i=1,2$ are normalized
singular shrinking GRS with singularities of codimension 4. Then 
\[
\mathcal{W}(g,f_{1})=\mathcal{W}(g,f_{2}).
\]
\end{prop}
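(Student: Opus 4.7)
My plan is to adapt Naber's argument from the smooth Type-I setting to the singular one. The starting observation: since both potentials satisfy $Rc_g + \nabla^2 f_i = \frac{1}{2}g$ on $\mathcal{R}$, their difference $h := f_1 - f_2$ satisfies $\nabla^2 h \equiv 0$, so $\nabla h$ is a parallel (hence Killing) vector field and $|\nabla h|$ is locally constant. If $|\nabla h|\equiv 0$, then $h$ is locally constant on $\mathcal{R}$, and the normalization $(4\pi)^{-n/2}\int_\mathcal{R} e^{-f_i}\,dg_\infty=1$ forces $f_1\equiv f_2$, trivially giving $\mathcal{W}(g,f_1)=\mathcal{W}(g,f_2)$. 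So the interesting case is when $|\nabla h|$ is a positive constant on a connected component of $\mathcal{R}$.

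Next, I would use the identity $R+|\nabla f_i|^2=f_i-\mathcal{W}(g,f_i)$ that holds for normalized singular shrinking GRS (a direct consequence of Corollary 17 and the normalization). Subtracting the two identities and expanding $|\nabla f_1|^2-|\nabla f_2|^2=2\langle\nabla f_1,\nabla h\rangle-|\nabla h|^2$ yields
\[
2\langle\nabla f_1,\nabla h\rangle = h + |\nabla h|^2 - \delta, \qquad \delta := \mathcal{W}(g,f_1)-\mathcal{W}(g,f_2).
\]
Taking the gradient of this relation and invoking $\nabla^2 h\equiv 0$ gives $\nabla^2 f_1(\nabla h)=\tfrac{1}{2}\nabla h$, which together with the soliton equation yields $Rc(\nabla h)\equiv 0$. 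This is the analogue of Naber's parallel direction with vanishing Ricci.

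The heart of the argument is then to exploit the flow $\phi_s$ of $\nabla h$. Where defined, $\phi_s$ is an isometry of $(\mathcal{R},g)$. A direct computation along flow lines, using the ODE $\tfrac{d^2}{ds^2}(f_1\circ\phi_s)=\nabla^2 f_1(\nabla h,\nabla h)\circ\phi_s=\tfrac{1}{2}|\nabla h|^2$, gives
\[
f_1\circ\phi_s = f_1 + s\langle\nabla f_1,\nabla h\rangle + \tfrac{s^2}{4}|\nabla h|^2.
\]
Plugging in the explicit formula for $\langle\nabla f_1,\nabla h\rangle$ and specializing to $s=-2$, this collapses to the clean identity $f_1\circ\phi_{-2}=f_2+\delta$. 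A change of variables in the normalization integral would then yield $1=e^{-\delta}\cdot 1$, and hence $\delta=0$.

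The main obstacle, and the reason codimension $4$ matters, is justifying the change-of-variables step: the flow $\phi_s$ is a priori not defined globally on $\mathcal{R}$, since trajectories could approach the singular set $X\setminus\mathcal{R}$. Following Claim 2.32 of \cite{bingspace2}, I would combine the integral curvature bound from \thref{bamthm} (applied with exponent approaching $4$, hence strictly greater than $3$) with a Fubini-type argument on hypersurfaces transverse to the constant-length field $\nabla h$: generic trajectories avoid the singular set because points with small $r_{Rm}$ occupy a set of vanishing $(n-1)$-dimensional Hausdorff measure on such a slice. Consequently, the set of points whose $\phi_s$-trajectory remains in $\mathcal{R}$ for all $s\in[-2,0]$ has full $g_\infty$-measure; $\phi_{-2}$ is measure-preserving on this set; and the identity $\int_\mathcal{R} e^{-f_1\circ\phi_{-2}}\,dg_\infty = \int_\mathcal{R} e^{-f_1}\,dg_\infty$ goes through, closing the argument.
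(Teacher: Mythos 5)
Your proposal is correct and takes the same overall strategy as the paper---flowing by the parallel (hence Killing) field $\nabla(f_1-f_2)$, and handling the singular set via the codimension-4 estimate---but finishes by a cleaner route. The paper normalizes $f=|\nabla h|^{-1}h$, introduces the slice $N=f^{-1}(0)$ and a Riemannian product decomposition $(U,dt^2+\widetilde g)\to(\psi(U),g)$, derives $\widetilde f_i(t,x)=\widetilde f_i(0,x)+a_it+\tfrac14 t^2$, and concludes by Fubini applied to the two normalization integrals that $a_1^2=a_2^2$, hence $R+|\nabla f_1|^2-f_1=R+|\nabla f_2|^2-f_2$ on $N$. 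Your version sidesteps the slice and the Fubini computation entirely: the ODE gives $f_1\circ\phi_s=f_1+s\langle\nabla f_1,\nabla h\rangle+\tfrac{s^2}{4}|\nabla h|^2$, the soliton constancy relation supplies $2\langle\nabla f_1,\nabla h\rangle=h+|\nabla h|^2-\delta$, and substituting $s=-2$ collapses everything to $f_1\circ\phi_{-2}=f_2+\delta$, whence normalization forces $\delta=0$ by a single measure-preserving change of variables. The arguments are in essence the same (the paper's Claim 3 is the arc-length-parametrized version of your ODE formula, and both hinge on the codimension estimate for a.e.\ flow-line regularity), but the $s=-2$ substitution is a genuine simplification that avoids both the explicit product structure and the Gaussian integrals at the end. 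One small point worth being explicit about: to equate $\int_{\mathcal R}e^{-f_1\circ\phi_{-2}}\,dg=\int_{\mathcal R}e^{-f_1}\,dg$ you need both the domain $\Omega=\{x:\phi_s(x)\in\mathcal R\text{ for all }s\in[-2,0]\}$ and its image $\phi_{-2}(\Omega)$ to have full measure in $\mathcal R$, the latter following by running the same codimension argument for the reverse flow; and the reduction to the nonconstant case uses connectedness of $\mathcal R$ (as does the paper).
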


\begin{proof}
We can assume that $f_{1}-f_{2}$ is not constant, otherwise the normalization
condition gives the claim. Set $f:=|\nabla(f_{1}-f_{2})|^{-1}(f_{1}-f_{2})$,
so that $|\nabla f|=1$ and $\nabla^{2}f=0$ on $\mathcal{R}$. Let
$\varphi_{t}(x)$ be the flow of $\nabla f$ starting at $x\in\mathcal{R}$
for $t\in\mathbb{R}$ such that this is defined. Fix $p\in(2,4)$,
$s\in(0,1]$. We first show that, for any $q\in X$, $s\in(0,1]$,
and $D<\infty$, the set 
\[
S_{D,s}:=\{x\in\mathcal{R}\cap B^{X}(q,D)\:;\:r_{Rm}^{X}(\varphi_{t}(x))<\frac{1}{2}s\mbox{ for some }t\in[-D,D]\}
\]
has Minkowski codimension at least $p-1$. We denote by $\mathcal{H}^{n-1}$
the $(n-1)$-dimensional Hausdorff measure on $\mathcal{R}$, which
coincides with the Lebesgue measure on any hypersurface.

Because $r_{Rm}^{X}$ is 1-Lipschitz, we can find $h\in C^{\infty}(\mathcal{R})$
such that $|\nabla h|\leq2$ and 
\[
\dfrac{1}{2}r_{Rm}^{X}<h<2r_{Rm}^{X}\hspace{6mm}\mbox{ on }\mathcal{R}.
\]
Using the coarea formula and the fact that singularities are of codimension
4, we have 
\begin{align*}
\int_{s}^{2s}\mathcal{H}^{n-1}(h^{-1}(t)\cap B^{X}(q,3D))dt= & \int_{\{s\leq h\leq2s\}\cap B^{X}(q,3D)}|\nabla h|dg\\
\leq & 2|\{r_{Rm}^{X}\leq4s\}\cap B^{X}(q,3D)\cap\mathcal{R}|\leq2\cdot4^{p}E_{p,3D,q}s^{p}.
\end{align*}
By Sard's theorem, we may therefore find $t=t(s)\in(s,2s)$ such that
$\Sigma_{s}:=h^{-1}(t)\cap B^{X}(q,3D)$ is smooth and satisfies $\mathcal{H}^{n-1}(\Sigma_{s})\leq2^{2p+1}E_{p,3D,q}s^{p-1}$.
Next, write $S_{D,s}=I_{s}\cup II_{s}$, where 
\[
I_{s}:=\{x\in S_{D,s}\:;\:r_{Rm}^{X}(x)\leq4s\},
\]
\[
II_{s}:=\{x\in S_{D,s}\:;\:r_{Rm}^{X}(x)>4s\}.
\]
Since the singularities of $\mathcal{X}$ are codimension 4, we have
\[
|I_{s}|\leq|\{r_{Rm}^{X}\leq4s\}\cap B^{X}(q,D)\cap\mathcal{R}|\leq4^{p}E_{p,3D,q}s^{p}.
\]
For any $y\in II_{s}$, there exists $t\in(-D,D)$ such that $\varphi_{t}(y)\in\Sigma_{s}$.
Moreover, $|\nabla f|=1$ implies $d(\varphi_{t}(y),q)\leq2D<3D$.
Now set 
\[
\Omega_{s}:=\{(t,x)\in(-D,D)\times\Sigma_{s}\:;\:\varphi_{t}(x)\mbox{ is well defined}\},
\]
which is open in $(-D,D)\times\Sigma_{s}$.\\
 
\noindent \textbf{Claim 1:} The Jacobian of $\eta:(\Omega_{s},dt^{2}+g_{\Sigma_{s}})\to(\mathcal{R},g),(t,x)\mapsto\varphi_{t}(x)$
is $\leq1$ everywhere.\\
 In fact, since each $\varphi_{t}$ is a local isometry, we have that
$$d\eta_{(t,x)}|_{T_{x}\Sigma_{s}}:T_{x}\Sigma_{s}\to T_{\varphi_{t}(x)}(\varphi_{t}(\Sigma_{s}))$$
is a linear isometric embedding for all $(t,x)\in(-D,D)\times\Sigma_{s}$.
Moreover, $d\eta_{(t,x)}(\partial/\partial t)=\nabla f(\varphi_{t}(x))$,
and so the Jacobian of $\eta$ at $(t,x)\in\Sigma_{s}\times(-D,D)$
is $|(\nabla f(\varphi_{t}(x))^{\perp}|\leq1$, where $\perp$ denotes
the projection $T_{\varphi_{t}(x)}\mathcal{R}\to(T_{\varphi_{t}(x)}\varphi_{t}(\Sigma_{s}))^{\perp}$.
$\square$\\
Note that $II_{s}\subseteq\eta(\Omega_{s})$ and the claim give $|\eta(\Omega_{s})|\leq\mathcal{H}^{n-1}(\Sigma_{s})\cdot2D\leq4^{p+2}E_{q,3D,p}Ds^{p-1}$,
so we may conclude 
\[
|S_{D,s}|=|I_{s}|+|II_{s}|\leq4^{p+3}E_{q,3D,p}s^{p-1}.
\]\\
\noindent \textbf{Claim 2:} $\{x\in\mathcal{R}\:;\:d(x,S_{D,s})<s\}\subseteq S_{2D,4s}$.\\
 In fact, suppose $x\in\mathcal{R}$ satisfies $d(x,S_{D,s})<s$.
If $r_{Rm}^{X}(x)<2s$, then $x\in S_{2D,4s}$ by definition. If $r_{Rm}^{X}(x)>2s$,
then there is a minimal geodesic from $x$ to some point in $S_{2D,s}$,
and this geodesic lies entirely in $\{r_{Rm}^{X}>s\}\subseteq\mathcal{R}$.
By construction, there is some $t\in(-D,D)$ such that $\varphi_{t}(\gamma)\cap\{r_{Rm}^{X}\leq\frac{1}{2}\}\neq\emptyset.$
Let $t_{0}\in(-D,D)$ be such that $|t_{0}|$ is minimal among such
$t$. We can assume, by replacing $f$ with $-f$, that $t_{0}>0$.
Then, since $\varphi_{t_{0}}$ is a local isometry, and $r_{Rm}^{X}\geq\frac{1}{2}s$
along $\varphi_{t}(\gamma)$ for $0\leq t\leq t_{0}$, we know that
$\varphi_{t_{0}}$ is defined on $\gamma$ and that $L_{g}(\varphi_{t_{0}}(\gamma))=L_{g}(\gamma)<s$.
Also, by construction we have $\varphi_{t_{0}}(\gamma)\cap\{r_{Rm}^{X}=\frac{1}{2}s\}\neq\emptyset$.
Since $r_{Rm}^{X}$ is 1-Lipschitz, this implies $r_{Rm}^{X}(\varphi_{t_{0}}(x))\leq3/2$,
hence $x\in S_{2D,4s}$.\\

This along with $|S_{2D,2s}|\leq4^{p+10}E_{q,6D,p}s^{p-1}$ implies
the Minkowski dimension claim. In particular, the set $S$ of $x\in\mathcal{R}$
such that $\varphi_{t}(x)$ does not exist for all time satisfies
$|S|=0$ and $\mathcal{H}^{n-1}(S\cap f^{-1}(0))=0$. Define $N:=f^{-1}(0)\cap\mathcal{R}$,
and let $U\subseteq\mathbb{R}\times N$ be the (open) maximal subset
where $\psi(t,x):=\varphi_{t}(x)$ is defined. Then $\mathcal{R}\setminus S\subseteq\psi(U)$,
since for any $x\in\mathcal{R}\setminus S$, we have $x=\psi(f(x),\varphi_{-f(x)}(x))$.
In particular, $|\mathcal{R}\setminus\psi(U)|=0$. By an computation similar to that in Claim 1, and noting that now $(\nabla f(\varphi_{t}(x)))^{\perp}=\nabla f(\varphi_{t}(x))$,
where $\perp$ denotes the projection $T_{\varphi_{t}(x)}\mathcal{R}\to(T_{\varphi_{t}(x)}\varphi_{t}(N))^{\perp}$,
we get that $\psi(U)$ is a Riemannian isometry $(U,dt^{2}+\widetilde{g})\to(\psi(U),g)$,
where $\widetilde{g}$ is the Riemannian metric $\widetilde{g}$ on
$N:=f^{-1}(0)\cap\mathcal{R}$ induced from $g$. In particular, $\widetilde{f}_{i}:=f_{i}\circ\psi\in C^{\infty}(U)$
are soliton functions, and $(f\circ\psi)(t,x)=t$.\\

\noindent \textbf{Claim 3:} There are $a_{i}\in\mathbb{R}$ such that 
\[
\widetilde{f}_{i}(t,x)=\widetilde{f}_{i}(0,x)+a_{i}t+\frac{1}{4}t^{2}.
\]
for all $(t,x)\in U$.

In fact, the pulled back soliton equation gives $\partial_{t}^{2}\widetilde{f}_{i}=\frac{1}{2}$
everywhere, so 
\[
\widetilde{f}_{i}(t,x)=\widetilde{f}_{i}(0,x)+\partial_{t}\widetilde{f}_{i}(0,x)t+\frac{1}{4}t^{2}
\]
for $(t,x)\in U$. Moreover, for any $X\in\mathfrak{X}(N)$,  we have $\nabla_X \partial_t=0$ ,  so the Riemannian product structure and the soliton equation give 
\[
X(\partial_{t}\widetilde{f}_{i})=\nabla^{2}\widetilde{f}_{i}(\partial_{t},X)=\frac{1}{2}g(\partial_{t},X)-Rc(\partial_{t},X)=0.
\]
This means that $\nabla(\partial_{t}\widetilde{f}_{i}-\frac{1}{2}t)=0$
on $U$, hence $\nabla(\langle\nabla f,\nabla f_{i}\rangle-\frac{1}{2}f)=0$
on the dense open subset $\psi(U)$ of $\mathcal{R}$. Because $f,f_{i}$
are smooth and $\mathcal{R}$ is connected, we get that $\langle\nabla f,\nabla f_{i}\rangle-\frac{1}{2}f$
is constant on $\psi(U)$, hence $\partial_{t}\widetilde{f}_{i}-\frac{1}{2}t$
is constant on $U$. In particular, $\partial_{t}\widetilde{f}_{i}$
is constant on $\{0\}\times N$, and the claim follows. $\square$ \\

Now we use the normalization conditions on $\widetilde{f}_{i}$. Since
$|\mathcal{R}\setminus\psi(U)|=0$ and $\partial_{t}(\widetilde{f}_{1}-\widetilde{f}_{2})=1$,
we have 
\begin{align*}
(4\pi)^{-\frac{n}{2}}\left(\int_{N}e^{-\widetilde{f}_{2}(0,x)}d\widetilde{g}(x)\right)\left(\int_{\mathbb{R}}e^{-\frac{1}{4}t^{2}-a_{2}t}dt\right) & =1,\\
(4\pi)^{-\frac{n}{2}}\left(\int_{N}e^{-\widetilde{f}_{2}(0,x)}d\widetilde{g}(x)\right)\left(\int_{\mathbb{R}}e^{-\frac{1}{4}t^{2}-a_{1}t}dt\right) & =1,
\end{align*}
since $\widetilde{f}_{1}=\widetilde{f}_{2}$ on $\{0\}\times N$.
Thus 
\[
e^{a_{2}^{2}}\int_{\mathbb{R}}e^{-\frac{1}{4}t^{2}}dt=e^{a_{2}^{2}}\int_{\mathbb{R}}e^{-\frac{1}{4}(t-2a_{2})^{2}}dt=e^{a_{1}^{2}}\int_{\mathbb{R}}e^{-\frac{1}{4}(t-2a_{1})^{2}}dt=e^{a_{1}^{2}}\int_{\mathbb{R}}e^{-\frac{1}{4}t^{2}}dt,
\]
which implies $a_{1}^{2}=a_{2}^{2}$. Noting that $\nabla_{X}\widetilde{f}_{1}(0,x)=\nabla_{X}\widetilde{f}_{2}(0,x)$
for all $x\in N$ and $X\in T_x N$, we thus have 
\[
|\nabla\widetilde{f}_{1}(0,x)|^{2}-|\nabla \widetilde{f}_2(0,x)|^2 = a_1^2 -a_2^2 =0.
\]
In particular, on $\{0\}\times N$, 
\[
R+|\nabla\widetilde{f}_{1}|^{2}-\widetilde{f}_{1}=R+|\nabla\widetilde{f}_{2}|^{2}-\widetilde{f}_{2}.
\]
Since $f_{i}$ are normalized, we have $R+|\nabla f_{i}|^{2}-f_{i}=-\mathcal{W}(g,f_{i})$,
so the proposition follows.
\end{proof}
Next, we address the rigidity statement of Theorem \ref{mainthm}.

\begin{prop} \thlabel{rigid}
Suppose $(M,(g_t)_{t\in[-2,0)},q)$ is a closed, pointed solution of
Ricci flow with $$\sup_{t\in[-2,0)}|R(\cdot,t)|(T-t)<\infty,$$ and
let $(\mathcal{X},q_{\infty})$ be a singular shrinking GRS obtained
as a Type-I limit. If $\mathcal{W}(g_{\infty},f_{\infty})=0$, then
$\mathcal{X}$ is the Gaussian shrinker. If this occurs, there is
a neighborhood $U$ of $q$ in $M$ such that $$\sup_{U\times[-2,0)}|Rm|<\infty.$$
\end{prop}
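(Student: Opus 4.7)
The plan is to use the $\epsilon$-regularity theorem (Theorem \ref{epsilonreg}) to promote the vanishing of $\mathcal{W}(g_{\infty},f_{\infty})$ to a parabolic Type-I curvature bound at $q$, and from there reduce to the known rigidity for smooth shrinkers with zero entropy. First I would show $\mathcal{W}_{q,t}(|t|)\to 0$ as $t\nearrow 0$: in the rescaled flow $\widetilde{g}^{t}_{s}:=|t|^{-1}g_{t+|t|s}$, scale invariance of $\mathcal{W}$ gives $\mathcal{W}_{q,t}(|t|)=\mathcal{W}(\widetilde{g}^{t}_{-1},\widetilde{f}_{t},1)$ with $\widetilde{f}_{t}(y):=f_{q,t}(y,2t)$. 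Lemma \ref{heatkernelbound} applied to $\widetilde{g}^{t}$ equips $\widetilde{f}_{t}$ with the locally uniform quadratic bounds the arguments of Section 3 used for elements of $\mathcal{F}_{q}$. Hence for any $t_{i}\nearrow 0$, a subsequence yields $(M,\widetilde{g}^{t_{i}},q)\to(\mathcal{X},q_{\infty})$ together with $\widetilde{f}_{t_{i}}\to\widetilde{f}_{\infty}$ in $C^{\infty}_{\mathrm{loc}}(\mathcal{R})$, with $\widetilde{f}_{\infty}$ a shrinker potential on $\mathcal{X}$ via the argument surrounding \eqref{eq:entmoot}. Running the entropy-convergence argument of \thref{entcon} and the normalization of Proposition \ref{normy} verbatim gives $\mathcal{W}(\widetilde{g}^{t_{i}}_{-1},\widetilde{f}_{t_{i}},1)\to\mathcal{W}(g_{\infty},\widetilde{f}_{\infty})$; by Proposition \ref{naberextend} this equals $\mathcal{W}(g_{\infty},f_{\infty})=0$. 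Since every subsequential limit is $0$, $\mathcal{W}_{q,t}(|t|)\to 0$.

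Theorem \ref{epsilonreg} then yields $r_{Rm}(q,t)^{2}\geq\epsilon|t|$ for $t$ close to $0$, giving the parabolic Type-I bound $|Rm|\leq(\epsilon|t|)^{-1}$ on $B(q,t,\sqrt{\epsilon|t|})$. Hamilton's smooth Cheeger-Gromov compactness, as in the Type-I-curvature analyses of \cite{mant,naber,topping}, extracts pointed smooth Cheeger-Gromov convergence on balls of fixed rescaled radius to a smooth pointed shrinking GRS. This smooth limit agrees with $(\mathcal{X},q_{\infty})$ on $\mathcal{R}$, so $\mathcal{X}$ is smooth near $q_{\infty}$. The identity $R_{g_{\infty}}+|\nabla f_{\infty}|^{2}-f_{\infty}\equiv 0$ on $\mathcal{R}$ (from the pointwise vanishing of the nonpositive Harnack integrand combined with $\mathcal{W}(g_{\infty},f_{\infty})=0$), together with analyticity of smooth shrinkers and connectedness of $\mathcal{R}$, then propagates the local Gaussian structure globally, forcing $\mathcal{X}$ to be the Gaussian shrinker on $\mathbb{R}^{n}$.

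For the remaining assertion, the Gaussian blow-up gives $|Rm|\cdot|t_{i}|\to 0$ uniformly on balls $B(q,t_{i},R_{i}\sqrt{|t_{i}|})$ of slowly growing rescaled radius $R_{i}\to\infty$. Perelman's forward pseudolocality at $(q,t_{i})$ then upgrades this to a uniform bound on $|Rm|$ over a parabolic cone reaching time $0$ on a fixed-size spatial ball; combined with smoothness on $M\times[-2,t_{0}]$ for any fixed $t_{0}<0$, this produces a neighborhood $U$ of $q$ with $\sup_{U\times[-2,0)}|Rm|<\infty$.

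The main obstacle I expect is Step 1: transferring the entropy machinery of Sections 3 and 5 (developed for elements of $\mathcal{F}_{q}$) to the sequence of interior-based conjugate heat kernels $u_{q,t}$, and identifying the resulting limit potential with the hypothesis' $f_{\infty}$ via Proposition \ref{naberextend}.
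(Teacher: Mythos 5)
Your first step (showing the pointed interior-time entropy $\mathcal{W}_{q,t}(|t|)\to 0$) is a legitimate variant of the paper's argument, modulo the technicality that different sequences $t_i\nearrow 0$ could a priori yield different singular limits $\mathcal{X}'$; this is patched by invoking the already-established part of Theorem \ref{mainthm} that $\Theta(q)=\mathcal{W}(g_\infty,f_\infty)$ holds for \emph{every} Type-I limit, so all such limits have soliton entropy $\Theta(q)=0$. The final step (pseudolocality near $q$) also matches the paper.

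The genuine gap is in the middle. Applying Theorem \ref{epsilonreg} only at the basepoint $q$ yields $r_{Rm}(q,t)^2\geq\epsilon|t|$, hence only a \emph{curvature bound} near $q_\infty$ in the rescaled limit; it does not give flatness at $q_\infty$, and it gives nothing at other points of $\mathcal{R}$. Your attempted bridge --- using the pointwise vanishing of the Harnack integrand to deduce $R_{g_\infty}+|\nabla f_\infty|^2-f_\infty\equiv 0$ and then propagating by analyticity --- does not close this gap. On any normalized shrinker, the Harnack integrand $R+2\Delta f-|\nabla f|^2+f-n$ is identically the constant $\mathcal{W}(g,f)$ (use $R+\Delta f=\tfrac n2$ and $R+|\nabla f|^2-f=\mathrm{const}$), so its pointwise vanishing is exactly equivalent to the hypothesis $\mathcal{W}(g_\infty,f_\infty)=0$ and carries no new geometric information. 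In particular it is not a ``local Gaussian structure'': you have not shown flatness on any open set, so there is nothing for analyticity and unique continuation to propagate. The paper's key idea, which your proposal misses, is to \emph{vary the basepoint of the conjugate heat kernel}: for each fixed $x\in\mathcal{R}$, applying the same $\epsilon$-regularity argument to $u_{\Phi_i(x),t_i}$ (based at $\Phi_i(x)$, which converges to $x$ in the rescaled picture) gives $r_{Rm}^{\widetilde g^i}(\Phi_i(x),0)\geq \sqrt{\epsilon\delta(x)}\,|t_i|^{-1/2}\to\infty$, hence $r_{Rm}^X(x)=\infty$; since $x\in\mathcal{R}$ was arbitrary, $\mathcal{R}$ is flat, and only then does one conclude $\mathcal{X}$ is the Gaussian shrinker. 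Without this basepoint variation the conclusion does not follow.
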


\begin{proof}
Fix $x\in\mathcal{R}$, and let $(U_{i},V_{i},\Phi_{i})$ be the convergence
scheme for $(M,\widetilde{g}_{0}^{i},q)\to(\mathcal{X},q_{\infty})$.
Note that 
\[
d^{X}(x,q_{\infty})=\lim_{i\to\infty}d_{\widetilde{g}_{0}^{i}}(\Phi_{i}(x),q)=\lim_{i\to\infty}|t_{i}|^{-\frac{1}{2}}d_{g_{t_{i}}}(\phi_{i}(x),q),
\]
so $\phi_{i}(x)\to q$ in $M$. Thus, after passing to a subsequence, $u_{\Phi_{i}(x),t_{i}}$ converges
to a conjugate heat kernel at the singular time $u\in\mathcal{U}_{q}$ in $C_{loc}^{\infty}(M\times(-1,0))$.
Writing $u(y,s)=(4\pi|s|)^{-\frac{n}{2}}e^{-f(y,s)}$, we know from
previous sections that, if $f_{i}(s):=f(t_{i}+|t_{i}|s)$, then $f_{i}(0)\circ\Phi_{i}$
converges in $C_{loc}^{\infty}(\mathcal{R})$ to a normalized soliton
function $\overline{f}_{\infty}$, which must satisfy $\mathcal{W}(g_{\infty},\overline{f}_{\infty})=\mathcal{W}(g_{\infty},f_{\infty})=0$
by Remark \ref{remarque} and Proposition \ref{naberextend}, hence (again using Remark \ref{remarque})
\begin{align*}
0&=\lim_{i\to\infty}\mathcal{W}(g_{0}^{i},f_{i}(0),1)=\lim_{i\to\infty}\mathcal{W}(|t_{i}|^{-1}g_{t_{i}},f(t_{i}),1)\\&=\lim_{i\to\infty}\mathcal{W}(g_{t_{i}},f(t_{i}),|t_{i}|)=\lim_{t\nearrow0}\mathcal{W}(g_{t},f(t),|t|)
\end{align*}
by \thref{entcon}. Now let $\epsilon=\epsilon(n,C)>0$ be the constant from Theorem \ref{epsilonreg}.
Then there exists $\delta>0$ such that $\mathcal{W}(g_{t},f(t),|t|)\geq-\frac{1}{2}\epsilon$
for all $t\in[-\delta,0)$. Because $f_{\Phi_{i}(x),t_{i}}\to f$
in $C_{loc}^{\infty}(M\times(-1,0))$, we know that for any fixed
$t\in(-1,0)$, we have
\[
\mathcal{W}(g_{t},f(t),|t|)=\lim_{i\to\infty}\mathcal{W}(g_{t+t_{i}},f_{\Phi_{i}(x),t_{i}}(t+t_{i}),|t|).
\]
In particular, $\mathcal{W}(g_{-\delta+t_{i}},f_{\Phi_i(x),t_{i}}(-\delta+t_{i}),\delta)\geq-\epsilon$
for sufficiently large $i\in\mathbb{N}$. By Theorem \ref{epsilonreg}, we conclude
$\left(r_{Rm}^{g}(\Phi_{i}(x),t_{i})\right)^{2}\geq\epsilon\delta$.
This means $\left(r_{Rm}^{\widetilde{g}^{i}}(\Phi_{i}(x),0)\right)^{2}>\epsilon\delta|t_{i}|^{-1}$,
so by backwards Pseudolocality, it follows that $(M,|t_{i}|^{-1}g_{t_{i}},q)$
actually converges in the $C^{\infty}$ Cheeger-Gromov sense to the
Gaussian shrinker on flat $\mathbb{R}^{n}$. 

Now apply a version of Perelman's
pseudolocality theorem (Theorem 1.2 of \cite{penglu}) to the ball $B(q,t_{i},D\sqrt{|t_{i}|})$,
with $D<\infty$ and $i\in \mathbb{N}$ sufficiently large, to conclude that 
$|Rm|(x,t)\leq C$ for all $x\in B(q,t_{i},\sqrt{|t_{i}|})$,
$t\in(t_{i},0)$, (see also Lemma 2.4 of \cite{topping}). 
\end{proof}
\noindent \emph{Proof of Theorem 1.} By Section 3, we can pass to a further subsequence in order to assume that $\widetilde{f}_i(0)$ converge to another smooth soliton potential function $f_{\infty}'\in C^{\infty}(\mathcal{R})$, which satisfies $\mathcal{W}(g_{\infty},f_{\infty}')=\Theta(q)$.  By Proposition \ref{naberextend}, we have $\mathcal{W}(g_{\infty},f_{\infty}')=\mathcal{W}(g_{\infty},f_{\infty})$. The remaining claim is Proposition \ref{rigid}. $\square$

\section{Removable Singularities}

In this section, we specialize to the four-dimensional case, where we first
sharpen Bamler's Minkowski dimension estimates for the singular
set, obtaining that the limiting singular GRS is actually smooth outside of a
discrete set of points. Using this, we are able to show the singularities
are conical $C^{0}$ orbifold singularities, without knowing that
the global $L^{2}$ norm of the curvature tensor on the regular set
is finite (this is not true in general, even if we assume
\eqref{eq:curv} so that $\mathcal{X}$ is smooth). In fact, it is not clear how one can prove local $L^2$ estimates for the curvature on the rescaled Ricci flow. This is because the $L^2$ curvature bound in dimension 4 is usually proved using the Chern-Gauss-Bonnet formula, but the argument relies crucially on the (rescaled) flow having uniformly bounded diameter. Moreover, it is not clear how to effectively localize the Chern-Gauss-Bonnet formula in this situation: applying the formula on a subdomain results in boundary terms which depend on the principal curvatures of the boundary. In \cite{hasl}, this difficulty was overcome by using properties of level sets of a shrinking GRS, which suggests that it may be easier to prove the $L^2$ curvature estimate on the limiting singular space rather than on the Ricci flow itself.

Therefore, we aim to prove
a local $L^{2}$ bound for $|Rm|$ near the singular points of $\mathcal{X}$, and then apply the removable singularity techniques of \cite{tianchern},\cite{caosesum}, \cite{uhlenbeck}. We achieve this
by estimating separately the traceless Ricci and the Weyl parts of
the curvature tensor, using ideas of Haslhoffer-Muller \cite{hasl} and Donaldson-Sun \cite{donaldsun},
respectively. After overcoming this difficulty, the proof is fairly
standard, and Uhlenbeck's theory \cite{uhlenbeck} of removable singularities along
with the $\epsilon$-regularity theorem proved in \cite{shaosai}, and later \cite{jiangreg}, let us conclude
that in fact the singular GRS has a $C^{\infty}$ orbifold structure.

Throughout this section, we suppose that $(M^{4},(g_{t})_{t\in[-2,0)})$
is a closed solution of Ricci Flow satisfying $\nu[g_{-2},4]\geq -A$ and
\[
|R(x,t)|\leq \dfrac{A}{|t|}
\]
for all $(x,t)\in M\times[-2,0)$. Fix a basepoint $q\in M$ and a
sequence of times $t_{i}\nearrow0$. Define the rescaled sequence
$\widetilde{g}_{t}^{i}:=|t_{i}|^{-1}g_{t_{i}+|t_{i}|t}$ for $t\in[-2,0)$. Then
the rescaled solutions satisfy $\sup_{M\times [-2,0]}|R_{\widetilde{g}^i}|\leq A$ and $\nu[\widetilde{g}_{-2}^i,4]\geq -A$
for all $i\in\mathbb{N}$. By Theorem 1.2 of \cite{bam2}, we may pass to a subsequence so that $(M,\widetilde{g}_{0}^{i},q)$ converges
to a pointed singular space $(\mathcal{X},q_{\infty})=(X,d,\mathcal{R},g,q_{\infty})$
with singularities of codimension 4, that is $Y$-regular at all scales,
for some $Y=Y(A)<\infty$, and satisfies the shrinking soliton equation
$Rc+\nabla^{2}f=\frac{1}{2}g$ on the regular part $\mathcal{R}$,
where $f\in C^{\infty}(\mathcal{R})$ is the obtained from a sequence
of rescaled conjugate heat kernels based at the singular time. We recall that $|R|\leq A$
on $\mathcal{R}$, and that $f$ satisfies quadratic growth estimates (\ref{fboundslimit}), which combine with the equation $R+|\nabla f|^{2}=f-\mathcal{W}(g,f)$ to
give a locally uniform gradient estimate for $f$. 
\begin{lem} \label{tech}
$X\setminus\mathcal{R}$ is discrete, and every tangent cone at $x\in X\setminus\mathcal{R}$
is isometric to $\mathbb{R}^{4}/\Gamma$ for some finite subgroup
$\Gamma\leq O(4,\mathbb{R})$ (which may depend on $x$ and the choice
of rescalings). Moreover, there exists $N=N(A)>0$ such that $|\Gamma|\leq N$.
\end{lem}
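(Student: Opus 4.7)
\emph{Proof proposal.} The plan is to combine Bamler's weak compactness theory with the classification of three-dimensional Einstein manifolds to identify each tangent cone, and then to invoke $Y$-regularity to preclude accumulation of singular points.

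\emph{Step 1: Tangent cones are Ricci-flat metric cones.} Fix $x \in X$ and a sequence of scales $r_k \searrow 0$. Via a diagonal procedure, the rescaled pointed spaces $(X, r_k^{-1}d, x)$ arise as Gromov--Hausdorff limits of further rescaled Ricci flows $(M, r_k^{-2}\widetilde g_0^{i(k)}, y_k)$ with $y_k \to x$. Under this double rescaling the scalar curvature bound becomes $|R| \leq A r_k^2 \to 0$, while the non-collapsing hypothesis is preserved. Applying the last assertion of \thref{bamthm} along a subsequence yields a limit $(Y_x, x_\infty)$ that is a Ricci-flat singular space with mild and codimension~$4$ singularities. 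A Cheeger--Colding style argument using the limiting volume monotonicity (developed by Bamler via Perelman's $\mathcal L$-geometry in \cite{bam1, bam2}) then shows that $Y_x$ is a metric cone $(C(Z), c_0)$ over a compact cross-section $(Z, d_Z)$ with $\mathrm{diam}(Z) \leq \pi$.

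\emph{Step 2: Identification as $\mathbb R^4/\Gamma$.} Since $\dim C(Z) = 4$ and the singularities have codimension $4$, the singular set of $C(Z)$ is $0$-dimensional. The radial cone structure forces any singular point $z \in Z$ to generate a $1$-dimensional ray of singular points in $C(Z)$, so $Z$ must be smooth; as a closed subset of $C(Z)$, it is a complete Riemannian $3$-manifold. Ricci-flatness of $C(Z)$ combined with the standard cone formulas gives $\mathrm{Ric}_Z = 2 g_Z$. In dimension $3$ any Einstein metric has constant sectional curvature, which here equals $1$; Bonnet--Myers then implies that $Z$ is compact, and the Killing--Hopf theorem yields $Z \cong S^3/\Gamma$ for some finite $\Gamma \leq O(4, \mathbb R)$ acting freely on $S^3$. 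Hence $C(Z) \cong \mathbb R^4/\Gamma$.

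\emph{Step 3: Uniform bound on $|\Gamma|$.} The volume lower bound of Proposition~\ref{volumebounds} (equivalently, Perelman non-collapsing) passes to the limit to give $\kappa = \kappa(A) > 0$ with $|B^X(z, r) \cap \mathcal R| \geq \kappa r^4$ for all $z \in X$ and $r \in (0, 1]$. Applied to the tangent cone this reads $\omega_4/|\Gamma| \geq \kappa$, and we may take $N := \lceil \omega_4/\kappa \rceil$.

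\emph{Step 4: Discreteness of $X \setminus \mathcal R$.} Suppose for contradiction that $x_i \in X \setminus \mathcal R$ satisfy $x_i \neq x$ and $x_i \to x \in X \setminus \mathcal R$. Let $r_i := d(x, x_i) \to 0$ and pass to a subsequence so that $(X, r_i^{-1} d, x)$ converges to a tangent cone $\mathbb R^4/\Gamma$ as in Steps~1--2, while the rescaled $x_i$ converge to some $y_\infty$ with $d(c_0, y_\infty) = 1$. Since $\Gamma$ acts freely on $S^3$, the only non-smooth point of $\mathbb R^4/\Gamma$ is $c_0$, so on a sufficiently small ball around $y_\infty$ the volume ratio approaches $\omega_4$. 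Transferring this to the prelimit via the convergence scheme, we obtain, for some fixed $\rho_0 > 0$ and all sufficiently large $i$, the bound $|B^X(x_i, r_i \rho_0) \cap \mathcal R| > (\omega_4 - Y^{-1})(r_i \rho_0)^4$. The $Y$-regularity clause of \thref{bamthm} then gives $r_{\mathrm{Rm}}(x_i) > Y^{-1} r_i \rho_0 > 0$, contradicting $x_i \notin \mathcal R$.

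\emph{Principal obstacle.} The hardest step is Step 1: producing the metric cone structure on tangent cones at an arbitrary point of $\mathcal X$. Classical Cheeger--Colding cone-splitting relies on monotonicity of volume ratios under a lower Ricci bound; here one must instead use Bamler's analog based on reduced volume and heat-kernel monotonicity under bounded scalar curvature Ricci flow, and then pass these monotonicities to the tangent-cone limit. The remaining steps follow essentially by dimension counting and classical Riemannian geometry.
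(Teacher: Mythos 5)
Your proposal follows essentially the same high-level strategy as the paper: realize tangent cones as scalar-curvature-going-to-zero limits of further rescaled Ricci flows via Theorem~\ref{bamthm}, use the cone structure and codimension-$4$ singularities to identify the link as a smooth $3$-dimensional Einstein manifold, classify it as a spherical space form, and rule out accumulating singular points by an $\epsilon$-regularity/$Y$-regularity contradiction. Steps 1--3 match the paper; in particular, the paper reduces the metric cone structure to a citation of Corollary 1.5 of \cite{bam2} and the bound on $|\Gamma|$ to $Y$-tameness (Prop.\ 4.2 of \cite{bam1}) rather than passing Proposition~\ref{volumebounds} through the double limit, but these are cosmetic differences.

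The genuine gap is in Step 4, in the phrase ``transferring this to the prelimit via the convergence scheme.'' What you need is volume \emph{convergence}: from the pointed Gromov--Hausdorff convergence $(X,r_i^{-1}d,x)\to(Z,d_Z,c_Z)$ of one singular space to another, and the fact that $B^Z(y_\infty,\rho_0)$ has Euclidean volume, you want to conclude $|B^X(x_i,r_i\rho_0)\cap\mathcal{R}|>(\omega_4-Y^{-1})(r_i\rho_0)^4$ for large $i$. But the ``convergence scheme'' of Definition 2.2 is a device for $C^\infty_{loc}$ convergence on the regular part of a \emph{manifold-to-singular-space} limit, and gives no control over the measure of the region near the singular point $x_i$, which lies at the center of the ball in question. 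Gromov--Hausdorff convergence by itself does not imply volume convergence; one needs a Cheeger--Colding type theorem, and the pointwise Ricci lower bounds that make the classical version work are unavailable here. The paper resolves this by returning to the smooth prelimit flows, extracting a locally uniform $L^3$ bound on $|Rc|$ from Lemma~6.1 of \cite{BZ1} combined with Bamler's integral estimate on $r_{Rm}^{-1}$ from Theorem~\ref{bamthm}, and then invoking a volume convergence theorem for noncollapsed manifolds with local integral Ricci bounds (a localization of Theorem~2.37 of \cite{tianfano}), together with a diagonal argument to pass from the double limit $M\to X\to Z$ to a single limit. This is precisely what the paper's appendix supplies, and your proof as written skips over it. The same issue affects the ``passes to the limit'' claim in your Step 3, though there one can instead appeal directly to $Y$-tameness of $Z$ as the paper does.
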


\begin{proof}
Fix $x_{0}\in X\setminus\mathcal{R}$, and let $(Z,d_{Z},c_{Y})$
be a tangent cone at $x_{0}$, with $\lambda_{i}\to\infty$ such that
$(X,\lambda_{i}d_{X},x_{0})\to(Z,d_{Z},c_{Z})$ in the pointed Gromov-Hausdorff
sense. By Corollary 1.5 of \cite{bam2}, $Z$ is a metric cone. Choose $x_{i}\in M$ such that $x_{i}\to x_{0}$ as $i\to\infty$.
By definition of the convergence $(M,g_{0}^{i},q)\to(\mathcal{X},q_{\infty})$,
for each $i\in\mathbb{N}$, we can choose $j=j(i)\geq i$ such that
$(M,\lambda_{i}^{2}g_{0}^{j(i)},x_{j(i)})$ is $\lambda_{i}i^{-1}$-close
in the pointed Gromov-Hausdorff topology to $(X,\lambda_{i}d_{X},x_{0})$.
Setting $\widetilde{g}_{t}^{i}:=\lambda_{i}^{2}g_{\lambda_{i}^{-2}t}^{j(i)}$,
we get that $(M,(\widetilde{g}_{t}^{i})_{t\in[-2,0]},x_{j(i)})_{i\in\mathbb{N}}$
is a sequence of pointed Ricci flows with $\sup_{M\times[-2,0]}|R_{\widetilde{g}^{i}}|\to0$
and $\nu[\widetilde{g}_{-2}^{i},4]\geq-A$, which converges in the
pointed Gromov-Hausdorff sense to $(Z,d_{Z},c_{Z})$. In particular,
$(Z,d_{Z},c_{Z})$ has the structure of a singular space $\mathcal{Z}=(Z,d_{Z},\mathcal{R}_{Z},g_{Z},c_{Z})$
with mild singularities of codimension 4, such that $Rc_{g_Z}=0$
on $\mathcal{R}_{Z}$. However, $Z=C(\Sigma)$ is
a metric cone, so the link $\Sigma$ of $Z$
is a smooth 3-dimensional Riemannian manifold. That is, $Z\setminus\{c_{Z}\}$
is a smooth metric cone $g_{Z}=dr^{2}+r^{2}g_{\Sigma}$ for some smooth
Riemannain metric $g_{\Sigma}$ on $\Sigma$. However, $Rc_{g_Z}=0$
implies $Rc_{g_{\Sigma}}=(n-1)g_{\Sigma}$, and since $\dim(\Sigma)=3$,
$(\Sigma,g_{\Sigma})$ must be a disjoint union of spherical space
forms. Because $\mathcal{R}_{Z}=Z \setminus \{ c_Z\}$ is connected, $\Sigma$ must
be connected. Thus,  $Z=C(\mathbb{S}^{3}/\Gamma)=\mathbb{R}^{4}/\Gamma$
for some finite subgroup $\Gamma\leq O(4,\mathbb{R})$. Moreover,
because $Z$ is $Y$-tame for some $Y=Y(A)<\infty$ (by Proposition
4.2 of \cite{bam1}), we have
\[
c(A)<|B^{Z}(c_{Z},1)\setminus\{c_{Z}\}|_{g_Z}=\omega_{n}/|\Gamma|.
\]

It remains to show that $x_{0}$ is an isolated point of $X\setminus\mathcal{R}$. Suppose by way of contradiction that there exist $y_{i}\in X\setminus(\mathcal{R}\cup\{x_{0}\})$
such that $y_{i}\to x_{0}$. Set $\lambda_{i}:=1/d(x_{0},y_{i})$.
By passing to a subsequence, we can assume $(X,\lambda_{i}d_{X},x_{0})$
converges in the pointed Gromov-Hausdorff sense to a tangent cone
$(Z,d_{Z},c_{Z})$ as above. For any $\alpha\in(0,1)$, we can pass to a further subsequence so that $(B^{X}(y_{i},\alpha\lambda_{i}^{-1}),\lambda_{i}d,y_{i})$ converges
in the pointed Gromov-Hausdorff sense to $(B^{Z}(y_{\infty},\alpha),d_{Z},y_{\infty})$
for some $y_{\infty}\in Z$ with $d(c_{Z},y_{\infty})=1$. 
By possibly shrinking $\alpha>0$, we can assume that $B^Z(y_{\infty},\alpha)$ is isometric to a ball in $\mathbb{R}^n$. Applying Theorem 2.37 of \cite{tianfano} (see the appendix of this paper), we have
 $$|B^{X}(y_{i},\alpha\lambda_{i}^{-1})\cap\mathcal{R}|_g \geq(\omega_{n}-\epsilon_{i})(\alpha\lambda_{i}^{-1})^4$$
for some sequence $\epsilon_{i}\to0$. However, the $Y(A)$-regularity
of $\mathcal{X}$ then implies $r_{Rm}(y_{i})>0$, contradicting $y_{i}\in X\setminus\mathcal{R}$. 
\end{proof}
\begin{thm} \thlabel{removable}
$\mathcal{X}$ has the structure of a $C^{\infty}$ Riemannian orbifold
with finitely many conical orbifold singularities, such that in orbifold charts
around the singular points, $f$ extends smoothly across the singular
points, and satisfies the gradient Ricci soliton equation everywhere. 
\end{thm}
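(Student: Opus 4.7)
By Lemma \ref{tech}, $X\setminus\mathcal{R}$ consists of finitely many isolated points (finiteness follows since $X\setminus\mathcal{R}$ is closed in the compact set $\overline{B^{X}(q_{\infty},D)}$ for any $D$, together with the Minkowski bound from \thref{bamthm}), and every tangent cone at such a point $x_{0}$ is of the form $\mathbb{R}^{4}/\Gamma_{x_0}$ with $|\Gamma_{x_0}|\leq N(A)$. The plan is to fix one such $x_{0}$, work in a small punctured ball $B^{X}(x_{0},r_0)\setminus\{x_{0}\}\subseteq\mathcal{R}$, establish a local $L^{2}$ bound on the Riemann tensor there, and then invoke Uhlenbeck's removable singularity theorem to produce a $C^{0}$ orbifold chart. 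The smoothness of the metric (and of $f$) in the chart will then follow from the $\epsilon$-regularity theorem of \cite{shaosai, jiangreg} for almost-Einstein 4-manifolds, applied after writing the soliton equation as a perturbation of the Einstein equation.

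The heart of the argument is therefore the local $L^{2}$ curvature bound $\int_{B^{X}(x_{0},r_0)\cap\mathcal{R}}|Rm|^{2}\,dg<\infty$. The plan is to split $|Rm|^{2}\leq C(|\mathring{Rc}|^{2}+|W|^{2}+R^{2})$ and control the two nontrivial pieces separately, with $R$ already bounded by $A$. For the traceless Ricci $\mathring{Rc}$, I would follow the Haslhofer--M\"uller strategy: the soliton identity $Rc+\nabla^{2}f=\tfrac{1}{2}g$ gives $\mathring{Rc}=-\mathring{\nabla^{2}f}$, and integration by parts (justified via Bamler's Proposition 5.2, exactly as in the proof of the entropy identity in Section~4, using the codimension-4 singularities together with the quadratic growth and gradient bounds on $f$) converts $\int|\mathring{\nabla^{2}f}|^{2}e^{-f}$ into an expression involving $R$, $|\nabla f|^{2}$, and $f$, all of which are controlled on a fixed small ball. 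A cutoff function supported in $B^{X}(x_{0},r_0)\setminus\{x_{0}\}$ of the type used in Lemma~16, chosen to vanish on the $\delta$-neighborhood of $x_{0}$ and outside $B(x_{0},r_{0})$, combined with the codimension-4 estimate, lets one pass to the limit $\delta\to 0$ with vanishing cutoff error. For the Weyl tensor $W$, I would adapt the Donaldson--Sun argument on tangent cones: since every tangent cone at $x_{0}$ is the flat cone $\mathbb{R}^{4}/\Gamma_{x_0}$ (so has $W\equiv 0$), and since rescalings $(X,\lambda^{2}d,x_{0})$ subconverge to such a cone, one gets that on annuli $A(x_{0};\rho,2\rho)$ one has $\rho^{4-n}\int_{A\cap\mathcal{R}}|W|^{2}\to 0$ as $\rho\to 0$, which combined with Moser iteration for the Weyl-Bianchi equation $\nabla\cdot W=\mathrm{Cot}(Rc)$ on an almost-Einstein manifold yields the summable bound $\int_{B(x_{0},r_0)\cap\mathcal{R}}|W|^{2}<\infty$.

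The hard step, and the one I expect will carry the bulk of the technical work, is the Weyl bound: Moser iteration requires a Sobolev inequality and an $L^{\infty}$ base estimate on each annulus, and one needs uniform control across all scales $\rho\to 0$. I would obtain the scale-invariant Sobolev inequality on annuli from the $Y$-regularity of $\mathcal{X}$ together with the mild-singularity property (so cutoffs along geodesics from interior points make sense), and I would obtain the uniform $L^{\infty}$ base estimate by applying backwards pseudolocality and $\epsilon$-regularity along the rescaled Ricci flows $(M,\lambda_{i}^{2}g_{0}^{j(i)},x_{j(i)})$ whose Gromov--Hausdorff limits are the tangent cones; the key point is that for $i$ large a definite annulus around $x_{j(i)}$ has almost-Euclidean volume ratio, so Theorem~\ref{epsilonreg} applies.

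Once $\int_{B(x_{0},r_0)\cap\mathcal{R}}|Rm|^{2}<\infty$ is in hand, the remainder is standard. Fix a tangent cone $\mathbb{R}^{4}/\Gamma_{x_0}$ at $x_{0}$ and lift the geometry to the universal cover $\mathbb{R}^{4}\setminus\{0\}$: by Uhlenbeck's removable singularity theorem in Einstein-type gauges (as used in \cite{tianchern, caosesum, uhlenbeck}), the $L^{2}$ curvature bound together with the volume condition implies the existence of a $C^{0}$ orbifold chart $\varphi_{x_{0}}:B(0^{4},r_{0})/\Gamma_{x_0}\to B^{X}(x_{0},r_{0})$ in which $(\varphi_{x_{0}}\circ\pi_{x_{0}})^{*}g$ extends continuously across $0$. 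The $\epsilon$-regularity theorem of \cite{shaosai, jiangreg}, applied in this chart where now the curvature tensor is in $L^{2}$ and the metric is close in $C^{0}$ to a flat metric on a small ball, upgrades the chart to $C^{\infty}$. Finally, $f$ extends smoothly across $0^{4}$ because $|\nabla f|$ and $|\nabla^{2}f|=|g/2-Rc|$ are bounded on $B(0^{4},r_{0})\setminus\{0\}$ in the smooth chart, so standard elliptic regularity applied to the soliton equation $\Delta f=\tfrac{n}{2}-R$ (now with smooth $R$) gives $f\in C^{\infty}(B(0^{4},r_{0}))$, and the soliton equation persists by continuity.
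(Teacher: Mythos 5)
Your overall strategy mirrors the paper's: decompose $|Rm|^2$ into scalar, traceless Ricci, and Weyl contributions, control the traceless Ricci via the Haslhofer--M\"uller soliton integration by parts on an annular cutoff supported in $\mathcal{R}$ (this is exactly what the paper does; since the cutoff is compactly supported away from $x^*$, ordinary Riemannian integration by parts suffices without invoking Bamler's Proposition 5.2), establish a $C^0$ orbifold chart via the Donaldson--Sun tangent-cone uniqueness argument, and then invoke the Uhlenbeck/Tian/Cao--Sesum removable singularity machinery and the $\epsilon$-regularity theorems to promote the chart to $C^\infty$. These parts agree with the paper.

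Your proposed Weyl estimate, however, has a genuine gap. Knowing $\int_{A(x_0;\rho,2\rho)}|W|^2\to 0$ as $\rho\to 0$ --- which is what the smooth convergence of rescalings to the flat cone provides --- does \emph{not} imply $\int_{B(x_0,r_0)}|W|^2<\infty$: summing over dyadic annuli requires a \emph{summable} rate, and a sequence tending to zero need not be summable. Moser iteration on the Bianchi-type divergence equation for $W$ gives pointwise control \emph{from} integral control, not a summable tail, and in dimension $4$ the scaling $|W|\sim d^{-2}$ is exactly critical for $L^2$, so pointwise $o(d^{-2})$ decay does not by itself yield $L^2$-integrability. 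The paper sidesteps this with a topological argument: the Chern--Weil identity
$$\int_{A(s,r)}\Big(|W_{+}|^{2}+\frac{R^2}{12}-\Big|Rc-\frac{R}{4}g\Big|^2\Big)\,dg=\int_{A(s,r)}tr(F_{\mathcal{A}_{+}}\wedge F_{\mathcal{A}_{+}})$$
together with the Cheeger--Simons theorem expresses the right-hand side mod $\mathbb{Z}$ as a difference of Chern--Simons invariants of the two boundary spheres; since the cross-sections converge smoothly to the flat link $\mathbb{S}^3/\Gamma$ as $s\searrow 0$, the inner Chern--Simons term tends to $0$, and a continuity argument fixes the integer ambiguity, giving a \emph{uniform} bound on $\int_{A(s,r)}|W_+|^2$ as $s\to 0$. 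This is the essential ingredient your proposal is missing, and it is precisely what makes a global Chern--Gauss--Bonnet argument (which, as the paper explains, cannot be effectively localized here) unnecessary. A second, smaller gap: your finiteness argument for $X\setminus\mathcal{R}$ only yields finitely many singular points in each bounded ball, not global finiteness, since $X$ is noncompact. The paper closes this by noting that every orbifold point is a critical point of $f$ (since $\Gamma$ fixes the lifted gradient, which must therefore vanish), and $R+|\nabla f|^2=f-\mathcal{W}(g,f)$ together with $|R|\leq A$ and the quadratic growth of $f$ confines all critical points of $f$ to a bounded region.
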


\begin{proof}
Fix $x^{\ast}\in X\setminus\mathcal{R}$. Suppose by way of contradiction
that there exists a sequence $x_{i}\to x^{\ast}$ such that $\liminf_{i\to\infty}|Rm(x_{i})|d_{X}^{2}(x_{i},x^{\ast})>0$
(since $x^{\ast}$ is an isolated point of $X\setminus\mathcal{R},$we
can assume $x_{i}\in\mathcal{R}$). Set $r_{i}:=d_{X}(x_{i},x^{\ast})$,
so that by passing to a subsequence, we may assume that $(X,r_{i}^{-1}d_{X},x^{\ast})$
converge in the pointed Gromov-Hausdorff sense to $(C(\mathbb{S}^{3}/\Gamma),d_{C(\mathbb{S}^{3}/\Gamma)},c_{0})$
for some finite subgroup $\Gamma\leq O(4,\mathbb{R})$, where $c_{0}\in C(\mathbb{S}^{3}/\Gamma)$
is the cone point. 

We claim that there are $\epsilon_{i}\to0$ such that, for all $x\in B^{X}(x^{\ast},2r_{i})\setminus\overline{B}^{X}(x^{\ast},\frac{1}{2}r_{i})$,
we have the Gromov-Hausdorff distance estimate 
$$d_{GH} \left( (B^{X}(x,\alpha r_{i}),r_{i}^{-1}d_{X},x), (B^{C(\mathbb{S}^{3}/\Gamma)}(\overline{x},\alpha),d_{C(\mathbb{S}^{3}/\Gamma)},\overline{x}) \right) <\epsilon_i$$
for some $\overline{x}\in C(\mathbb{S}^{3}/\Gamma)$ with $\frac{1}{4}\leq d(c_{0},\overline{x})\leq4$.
Suppose by way of contradiction that there exist $\epsilon>0$ and
$y_{i}\in B^{X}(x^{\ast},2r_{i})\setminus\overline{B}^{X}(x^{\ast},\frac{1}{2}r_{i})$
where $$d_{GH}((B^{X}(y_{i},\alpha r_{i}),r_{i}^{-1}d_{X},y_{i}),(B^{C(\mathbb{S}^{3}/\Gamma)}(\overline{x},\alpha),d_{C(\mathbb{S}^{3}/\Gamma)},\overline{x}))>\epsilon$$ 
for every $\overline{x}\in C(\mathbb{S}^{3}/\Gamma)$ with $\frac{1}{4}\leq d(c_0,\overline{x})\leq4$.
Let $\psi_{i}:B^{X}(x^{\ast},100r_{i})\to C(\mathbb{S}^{3}/\Gamma)$
be $\delta_{i}$-Gromov-Hausdorff approximations $$(B^{X}(x^{\ast},100r_{i}),r_{i}^{-1}d_{X},x^{\ast})\to(B^{C(\mathbb{S}^{3}/\Gamma)}(c_{0},100),d_{C(\mathbb{S}^{3}/\Gamma)},c_{0}),$$
where $\delta_{i}\to0$. Then $\psi_{i}|B^{X}(y_{i},\alpha r_{i})$
is a $\frac{1}{2}\delta_{i}$-Gromov-Hausdorff approximation from
$$(B^{X}(y_{i},\alpha r_{i}),r_{i}^{-1}d_{X},y_{i}) \to (B^{C(\mathbb{S}^{3}/\Gamma)}(\psi_{i}(y_{i}),\alpha),d_{C(\mathbb{S}^{3}/\Gamma)},\psi_{i}(y_{i})),$$
where $\frac{1}{3}\leq d(c_{0},\psi_{i}(y_{i}))\leq3$. Passing to
a subsequence, we may assume that $\psi_{i}(y_{i})$ converges in
to some $y_{\infty}\in C(\mathbb{S}^{3}/\Gamma)$ with $\frac{1}{3}\leq d(c_{0},y_{\infty})\leq3$.
Then $(B^{X}(y_{i},\alpha r_{i}),r_{i}^{-1}d_{X},y_{i})$ converges
in the pointed Gromov-Hausdorff sense to $(B^{C(\mathbb{S}^3/\Gamma)}(y_{\infty},\alpha),d_{C(\mathbb{S}^{3}/\Gamma)},y_{\infty})$,
a contradiction.

Because $|\Gamma|\leq N(A)$, we can choose $\alpha =\alpha(A)>0$ sufficiently small so that $|B^{C(\mathbb{S}^3/\Gamma)}(y,\alpha)| = \omega_n \alpha^n$ for all $y\in C(\mathbb{S}^3/\Gamma)$ with $\frac{1}{4}\leq d(y,c_0)\leq 4$. Another application of Theorem 2.37 of \cite{tianfano} tells us that the volume ratios $(\alpha r_{i})^{-n}|B^{X}(x,\alpha r_{i})|$ converge
to the Euclidean volume ratio, locally uniformly for $x\in X$ with
$2r_{i}>d_{X}(x,x^{\ast})>\frac{1}{2}r_{i}$. Note that we could also perform a conformal change using the potential function $f$ as in \cite{zhangdegen}, and appeal to the volume convergence theorem for manifolds with Ricci curvature bounded below. By the
$Y(A)$-regularity of the singular space $\mathcal{X}$, we may conclude
that $r_{Rm}^{X}(x_{i})>Y^{-1}r_{i}$ for some $Y=Y(A)<\infty$. In
particular, also using the lower volume bound, we may assume by passing
to a subsequence and applying the Cheeger-Gromov compactness theorem,
 that the convergence of the rescaled metrics $(B^{X}(x_{i},\frac{1}{2}\alpha_{i}r_{i}),r_{i}^{-2}g)$
to a ball in $B^{C(\mathbb{S}^{3}/\Gamma)}(c_{0},4)\setminus B^{C(\mathbb{S}^{3}/\Gamma)}(c_{0},\frac{1}{4})$
is smooth. However, $C(\mathbb{S}^{3}/\Gamma)$ is flat away from
the cone point, so actually $|Rm(x_{i})|d_{X}^{2}(x_{i},x^{\ast})\to0$
as $i\to\infty$, a contradiction. We may therefore conclude that
$|Rm|(x)d_X^{2}(x,x^{\ast})\leq\epsilon(x)$ for $x\in B^{X}(x^{\ast},\delta)\setminus\{x^{\ast}\}$,
where $\lim_{x\to x^{\ast}}\epsilon(x)=0$. By local estimates for shrinking GRS (c.f. the proof of Theorem 2.5 in \cite{hasl}), we even have $|\nabla^{k}Rm|(x)d_X^{2+k}(x,x^{\ast})\leq\epsilon(x)$
for $x\in B^{X}(x^{\ast},\delta)$, any $k\geq0$.\\

\noindent \textbf{Claim: }$C(\mathbb{S}^{3}/\Gamma)$ is the unique
tangent cone of $X$ at $x^{\ast}$.

We proceed as in the proof of Lemma 5.13 in \cite{nakajima}. The above arguments give that, for any tangent cone $C(\mathbb{S}/\Gamma')$ of $X$
at $x^{\ast}$, there is a sequence $s_{i}\searrow0$ such that $B^{X}(x^{\ast},2s_{i})\setminus B^{X}(x^{\ast},s_{i})$
is diffeomorphic to $(1,2)\times\mathbb{S}^{3}/\Gamma'$. It therefore
suffices to find $\overline{r}>0$ such that $B^{X}(x^{\ast},2r)\setminus\overline{B}^{X}(x^{\ast},r)$
and $B^{X}(x^{\ast},2s)\setminus\overline{B}^{X}(x^{\ast},s)$ are
diffeomorphic for all $s,r\in(0,\overline{r})$. Set 
\begin{align*}
\theta(r)&:=\sup\{\angle(\dot{\gamma}(r),\dot{\eta}(r));\gamma,\eta:[0,r]\to B^{X}(x,r) \\&\hspace{ 20 mm}\text{ are unit-speed minimizing geodesics from }x^{\ast}\text{ to some }x\in\partial B^{X}(x,r)\}.
\end{align*}
We claim that $\lim_{r\to0}\theta(r)=0$. Otherwise, there exists
$\tau>0$ and a sequence of unit-speed geodesics $\gamma_{i},\eta_{i}:[0,r_{i}]\to X$ from $x$ to some $x_{i}\in\partial B^{X}(x^{\ast},r_{i})$, such that $\angle(\dot{\gamma}(r_{i}),\dot{\eta}_{i}(r_{i}))\geq\tau$.
After passing to a subsequence, we have pointed Cheeger-Gromov convergence
$$(B^{X}(x^{\ast},\alpha r_{i}),r_{i}^{-2}g,x_{i})\to(B^{C(\mathbb{S}^{3}/\Gamma')}(x_{\infty},\alpha),g_{C(\mathbb{S}^{3}/\Gamma')},x_{\infty})$$
for some $x_{\infty}\in\partial B^{C(\mathbb{S}^{3}/\Gamma')}(c_{0},1)$, $$r_i^{-1} \dot{\gamma }_i(r_i)\to v \in T_{x_\infty}C(\mathbb{S}^3/\Gamma '),$$ $$r_i^{-1}\dot{\eta}_i(r_i)\to w \in T_{x_\infty}C(\mathbb{S}^3/\Gamma'),$$
pointed Gromov-Hausdorff convergence $$(B^{X}(x^{\ast},\delta),r_{i}^{-1}d_{X},x^{\ast})\to(C(\mathbb{S}^{3}/\Gamma'),g_{C(\mathbb{S}^{3}/\Gamma')},c_{0})$$
and (after constant-speed reparametrization) $\gamma_{i},\eta_{i}$
converge (smoothly on $(0,1]$) to unit-speed geodesics $\gamma_{\infty},\eta_{\infty}:[0,1]\to X$
from $c_{0}$ to $x_{\infty}$ with $$\angle(\dot{\gamma}_{\infty}(1),\dot{\eta}_{\infty}(1)) = \angle(v,w) \geq\tau,$$
contradicting the fact that there is a unique minimizing geodesic
from $c_{0}$ to any $x\in C(\mathbb{S}^{3}/\Gamma'$). Therefore
$\lim_{r\to0}\theta(r)=0$, so for $r_{0}>0$ sufficiently small,
we can construct a smooth vector field whose flow can be used to construct a homeomorphism (see Proposition 12.1.2 and Lemma 12.1.3 of \cite{peter}) between $B^{X}(x^{\ast},2s_{i})\setminus B^{X}(x^{\ast},s_{i})$,
$i=1,2$, for any $s_{1},s_{2}\in(0,r_{0}]$. However, we know $B^{X}(x^{\ast},2s_{i})\setminus B^{X}(x^{\ast},s_{i})$
is homeomorphic to $(1,2)\times\mathbb{S}^{3}/\Gamma_{i}$ for some
finite subgroups, so we must have $\mathbb{S}^3/\Gamma_{1}=\mathbb{S}^3/\Gamma_{2}$. $\square$

We can now apply the argument in Step 1 of \cite{donaldsun} (see also
\cite{BZ1}, \cite{nakajima},\cite{tianchern}) verbatim to our situation
to conclude that there is a diffeomorphism $F:(B(0^{4},r_{0})\setminus\{0^{4}\})/\Gamma\to B^{X}(x^{\ast},r_{0})\setminus\{x^{\ast}\}$
such that $(F\circ\pi)^{\ast}g$ extends to a $C^{0}$ Riemannian
metric on $B(0^{4},r_{0})$, where $\pi:\mathbb{R}^{4}\to\mathbb{R}^{4}/\Gamma$
is the quotient map. By replacing $g$ with $(F\circ \pi)^{\ast}g$, we may as well assume $\Gamma$ is trivial. Define $A(r_{1},r_{2}):=B^{X}(x^{\ast},r_{2})\setminus\overline{B}^{X}(x^{\ast},r_{1})$
and $B^{\ast}:=B^{X}(x^{\ast},r_{0})\setminus\{x^{\ast}\}$.

\noindent \textbf{Claim: }$\int_{B^{\ast}}|Rm|^{2}dg<\infty$.

In four dimensions, the curvature tensor $Rm$ admits the orthogonal
decomposition 
\begin{equation} \label{decomp} Rm=\dfrac{R}{24}g\owedge g+\frac{1}{2}\left(Rc-\frac{R}{4}g\right)\owedge g+W. \end{equation}
Because $|R|\leq A$ on $\mathcal{R}$, the first term of (\ref{decomp}) is bounded
pointwise. We use the method of \cite{hasl} to estimate the second
term of (\ref{decomp}). Fix $\beta\in(0,1),$and let $\phi\in C_{c}^{\infty}(A(\beta r_{0},r_{0}))$
be a cutoff function with $|\nabla\phi|\leq C(n)(\beta r_{0})^{-1}$ on $A(\beta r_{0},2\beta r_{0})$,
$|\nabla\phi|\leq C(n)r_{0}^{-1}$ on $A(\frac{1}{2}r_{0},2r_{0})$, and
$\phi=1$ on $A(2\beta r_{0},\frac{1}{2}r_{0})$. Then, setting $E:=\sup_{B}(e^{-f}+|\nabla f|)$,
we get
\begin{align*}
\int_{B^{\ast}}|Rc|^{2}\phi^{2}e^{-f}dg= & \int_{B^{\ast}}\left\langle \frac{1}{2}g-\nabla^{2}f,Rc\right\rangle \phi^{2}e^{-f}dg\\
= & C(A,E)+\int_{B^{\ast}}\langle\nabla f,\text{div}_{f}(\phi^{2}Rc)\rangle e^{-f}dg\\
\leq & C(A,E)+\int_{B^{\ast}}2|\nabla f|\cdot|\nabla\phi|\cdot\phi|Rc|e^{-f}dg\\
\leq & C(A,E)+\frac{1}{2}\int_{B^{\ast}}|Rc|^{2}\phi^{2}e^{-f}dg+2\int_{B^{\ast}}|\nabla f|^{2}|\nabla\phi|^{2}e^{-f}dg,
\end{align*}
since $\text{div}_{f}Rc=0$. Rearranging, we conclude
\begin{align*}
\int_{B^{\ast}}|Rc|^{2}\phi^{2}e^{-f}dg\leq & C(A,E)+C(A,E,r_{0})\beta^{-2}\text{Vol}_{g}(A(0,2\beta r_{0}))\\
\leq & C(A,E)+C(A,E,r_{0})\beta^{2}.
\end{align*}
Taking $\beta\to 0$, and recalling that $f$ is locally bounded above, we obtain $\int_{B^{\ast}}|Rc|^{2}dg<\infty$.
Finally, to estimate the third term of (\ref{decomp}), we further decompose $W$ into
the self-dual and anti-self-dual parts $W_{\pm}$, and then employ the strategy of \cite{donaldsun}. Let $\mathcal{A}_{+}$ be the connection on the bundle $\Lambda_{+}$ of self-dual forms on
$\mathcal{R}$ induced by the Levi-Civita connection of $(\mathcal{R},g)$ (see section 6.D of \cite{besse} for definitions).
Then, because $W_+$ is self-dual, $(Rc-\frac{R}{4}g)\owedge g$ is anti-self-dual, and $\Lambda_+,\Lambda_-$ are orthogonal, we have
$$
\int_{A(s,r)}\left(|W_{+}|^{2} + \dfrac{R^2}{12} - \left| Rc-\frac{R}{4}g \right|^2 \right) dg=  \int_{A(s,r)}tr(F_{\mathcal{A}_{+}}\wedge F_{\mathcal{A}_{+}}),
$$
but also (by Example 2.5 of \cite{cheegersimons})
\[
\int_{A(s,r)}tr(F_{\mathcal{A}_{+}}\wedge F_{\mathcal{A}_{+}})=CS(\mathcal{A}_{+},\partial B(x^{\ast},r))-CS(\mathcal{A}_{+},\partial B(x^{\ast},s))\hspace{6mm} (\text{mod }\mathbb{Z}),
\]
where 
\[
CS(B,\Sigma):=\frac{1}{8\pi^2} \int_{\Sigma}tr\left(dB\wedge B+\frac{2}{3}B\wedge B\wedge B\right)\in\mathbb{R}/\mathbb{Z}.
\]
is the Chern-Simons invariant (associated to the first Pontryagin class) of a connection $\nabla=d+B$ on a trivial bundle over a 3-manifold $\Sigma$, once we have chosen an arbitrary global section of the bundle. However, by the Cheeger-Gromov convergence
of $r^{-2}g|\partial B(x^{\ast},r)$ to a flat bundle metric on $(T\mathbb{R}^4)|\mathbb{S}^{3}$ as $r\to \infty$,
we may conclude that $\mathcal{A}_{+}|\partial B(x^{\ast},r)$ converge (after
pulling back by diffeomorphisms $\psi_i:\mathbb{S}^3 \to \partial B(x^{\ast},r)$) to the Euclidean connection $D$ on the trivial bundle of self-dual 2-forms of $(\mathbb{R}^{4}\setminus\{0\})$ restricted to $\mathbb{S}^3$, which has Chern-Simons invariant $0$. This means
$$CS(\mathcal{A}_+,\partial B(x^{\ast},r))=CS(\psi_i^{\ast}\mathcal{A}_+,\mathbb{S}^3)\to CS(D,\mathbb{S}^3)=0$$
as $r\searrow 0$ , so we can choose $r\in(0,r_{0}]$ sufficiently small such that $|CS(\mathcal{A}_{+},\partial B(x^{\ast},s))|\leq\frac{1}{8}$
mod $\mathbb{Z}$ for all $s\in[0,r]$. Because $$s\mapsto\int_{A(s,r)}tr(F_{\mathcal{A}_{+}}\wedge F_{\mathcal{A}_{+}})\in[-\frac{1}{4},\frac{1}{4}]\hspace{6 mm} (\text{mod } \mathbb{Z})$$
is continuous, we conclude that the integral is bounded uniformly
(in $\mathbb{R}$) for all $s<r$. In particular, we can take $s\searrow0$ to obtain
$\int_{B^{\ast}}|W_{+}|^{2}dg<\infty$, and the proof of $\int_{B^{\ast}}|W_{-}|^{2}dg<\infty$
is similar. $\square$

We can now argue as in \cite{caosesum}, \cite{tianchern}, to conclude that
in fact $B^{\ast}$ has the structure of a $C^{\infty}$ orbifold
at $x^{\ast}$. Note that, because we have bounds on $f,|\nabla f|$
on $B^{\ast},$the only difference in our setting is that we must
use the $\epsilon$-regularity theorem that is
Theorem 1.1 of \cite{jiangreg} or Theorem 1.2 of \cite{jiangreg} (note that the completeness condition can be replaced with the condition that a larger geodesic ball is locally compact).
Also, $R+|\nabla f|^{2}=f-\mathcal{W}(g,f)$, $|R|\leq A$,
and the quadratic growth of $f$ imply that all critical points of $f$ must occur in some bounded set. On the other hand, any orbifold point of $X$
must be a critical point: if $\varphi:\mathbb{R}^{4}/\Gamma\supseteq U\to B^{X}(x,\delta)$
is an orbifold chart, and $\pi:\mathbb{R}^{4}\to\mathbb{R}^{4}/\Gamma$
is the quotient map, then $\nabla^{(\pi\circ\varphi)^{\ast}g}(\pi\circ\varphi)^{\ast}f$
must be fixed by all of $\Gamma$, so must be the zero vector. Since
$X\setminus\mathcal{R}$ is discrete and bounded, it must be finite. 
\end{proof}

\noindent \emph{Proof of Theorem 3.} This is immediate from \thref{removable}. $\square$

\section{Appendix}

In this section, we give further details for the claim in Lemma \ref{tech} that 
$$|B^X(y_i,\alpha \lambda_i^{-1})\cap \mathcal{R}|_g \geq (\omega_n -\epsilon_i )(\alpha \lambda_i^{-1})^4$$ for some sequence $\epsilon_i \to 0$. The main idea is to use the fact that, for $i\in \mathbb{N}$ sufficiently large, $(B^X(y_i,\alpha \lambda_i^{-1}),\lambda_i d,y_i)$ is arbitrarily close to a Euclidean ball in the pointed Gromov-Hausdroff sense, and to then appeal to a volume convergence theorem for Riemannian manifolds with integral Ricci lower bounds.

Observe that, by Lemma 6.1 of \cite{BZ1}, we have
$$|Rc|_{\widetilde{g}^i}(\cdot,0)\leq C(A)(r_{Rm}^{\widetilde{g}^i})^{-1}(\cdot,0),$$
so combining this with the integral estimate for the curvature scale
(Theorem 1.7 of \cite{bam2}) gives
\[
\int_{B_{\widetilde{g}^{i}}(x_{i},0,1)}|Rc|^{3}(\cdot ,0) d\widetilde{g}_{0}^{i}\leq\int_{B_{\widetilde{g}^{i}}(x_{i},0,1)}(r_{Rm}^{g^i}(\cdot,0))^{-3}d\widetilde{g}_{0}^{i}\leq C(A).
\]
Note that we actually have a local $L^p$ bound for $Rc$ for any $p<4$, and the following arguments will work for any $p\in (2,4)$, but we choose $p=3$ for convenience. 

Let $\mathcal{H}_d^4=\mathcal{H}^4$ be the $4$-dimensional Hausdorff measure on the metric space $(X,d)$. Because $\mathcal{H}^{4}(X\setminus\mathcal{R})=0$, and because $\mathcal{H}^{4}$ agrees with the Riemannian volume measure on any 4-dimensional Riemannian manifold (in particular, on $\mathcal{R})$, we have $\mathcal{H}^{4}(S)=|S\cap\mathcal{R}|$ for any subset $S\subseteq X$. Thus
$$\mathcal{H}_{\lambda_i d}^4(B^X(y_i,\alpha \lambda_i^{-1}))=\lambda_i^4 |B^X(y_i,\alpha \lambda_i^{-1})\cap \mathcal{R}|_g,$$
$$\mathcal{H}_{d_Z}^4(B^Z(y_{\infty},\alpha))=\omega_n \alpha^n.$$

We now restate the modification of Theorem 2.37 of \cite{tianfano} that we will be using. Denote by $|Rc_-|(x)$ the absolute value of the smallest negative eigenvalue of $Rc(x)$ (if $Rc(x)\geq 0$, then $|Rc_-|=0$). 
\begin{lem} \label{aux} For any $\kappa>0$, $\Lambda<\infty$, $n\in \mathbb{N}$, and $p>n$, there exist $r_0=r_0(n,p,\kappa,\Lambda,\epsilon)>0$ such that the following holds. Suppose $(M_i^n,g_i,x_i)$ is a sequence of complete Riemannian manifolds satisfying:\\
$(i)$ $\int_{B(x,1)}|Rc_-|^p dg \leq \Lambda$ for all $x\in M_i$, \\
$(ii)$ $|B(x,r)|\geq \kappa r^n$ for all $r\in (0,1]$, $x\in M$.\\
Assume that $(M_i^n,g_i,x_i)$ converge in the pointed Gromov-Hausdorff sense to the complete metric length space $(X,d,p)$. Then, for any $r\in (0,r_0]$, we have $$\mathcal{H}_d^n(B(x,r))=\lim_{i\to \infty} |B(x_i,r_i)|.$$ 
\end{lem}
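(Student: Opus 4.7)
The plan is to reduce the claim to a volume convergence theorem for Riemannian manifolds with integral Ricci curvature lower bounds, in the spirit of Petersen--Wei. The key analytic input is that the condition $p>n$ allows an integral Bishop--Gromov type relative volume comparison whose error becomes negligible on small scales, so this can be combined with the Gromov--Hausdorff convergence via a standard Colding-type covering argument.

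First, I would apply the Petersen--Wei relative volume comparison for integral Ricci bounds: under hypothesis $(i)$, for any $0<s<r\leq r_0(n,p,\Lambda)$ sufficiently small, one has an inequality of the form
\[
\left(\frac{|B(x,r)|}{\omega_n r^n}\right)^{1/n} - \left(\frac{|B(x,s)|}{\omega_n s^n}\right)^{1/n} \leq \eta(r),
\]
where $\eta(r)\to 0$ at a rate controlled by $\Lambda$ and $p-n>0$. Together with $(ii)$, this forces the volume ratios $|B(y,r)|/(\omega_n r^n)$ to lie in $[\kappa/\omega_n,\,1+\epsilon_0]$ uniformly in $y\in M_i$ and $i$, for all $r\leq r_0$, with $\epsilon_0=\epsilon_0(r_0)\to 0$ as $r_0\to 0$. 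This is the key uniform comparison that will survive the Gromov--Hausdorff passage.

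Second, I would establish the two half-inequalities separately. For the upper bound $\limsup_i|B(x_i,r)|\leq \mathcal{H}^n_d(B(x,r))$, apply Vitali to extract a disjoint family of small balls $\{B(z_j,\delta)\}$ in $M_i$ whose five-fold dilates cover $B(x_i,r)$; on each small ball the volume is bounded by $(1+\epsilon_0)\omega_n\delta^n$ by the comparison above, and the lifted $\delta$-packing in the limit provides the matching Hausdorff mass as $\delta\to 0$. For the reverse inequality $\liminf_i|B(x_i,r)|\geq \mathcal{H}^n_d(B(x,r))$, pack the limit ball $B^X(x,r)$ by small disjoint balls, transfer them via the Gromov--Hausdorff $\epsilon_i$-approximation to almost-disjoint balls in $M_i$, and apply the lower volume bound from $(ii)$ to each; these volumes sum to a quantity which, as $\delta\to 0$, approaches $\mathcal{H}^n_d(B(x,r))$ from below.

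The main obstacle is managing the simultaneous errors from (a) the integral volume comparison, whose defect is bounded by an $r^{2-n/p}$-type term that vanishes precisely because $p>n$; (b) the Gromov--Hausdorff $\epsilon_i$-approximation on each scale $\delta$; and (c) the change of basepoint required when moving the integral bound $(i)$ to a sub-ball, which is handled by the uniformity of $(i)$ over all basepoints in a unit ball. The quantifiers must be chosen in the order $r_0\to \delta \to i$, so that first $\epsilon_0$ is made tiny by choosing $r_0$ small, then the Vitali scale $\delta$ is sent to $0$, and finally $i\to\infty$ absorbs the Gromov--Hausdorff error; this forces both half-inequalities to give the same quantity, yielding the claimed equality.
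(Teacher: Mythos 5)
Your overall strategy—Petersen--Wei almost-monotonicity of volume ratios plus a covering/packing transfer under Gromov--Hausdorff convergence—points in the right direction, but as written the argument has a genuine gap. The difficulty is the sharp constant $\omega_n$. In your $\liminf$ step you pack $B^X(x,r)$ with $\delta$-balls, transfer them to $M_i$, and apply the noncollapsing lower bound $|B(z,\delta)|\geq\kappa\delta^n$; this only yields $\liminf_i|B(x_i,r)|\gtrsim (\kappa/\omega_n)\,\mathcal{H}^n(B(x,r))$, a multiplicative loss that never goes away, since $(ii)$ does not improve to $(1-\epsilon)\omega_n\delta^n$ at small scales (integral Bishop--Gromov gives only an \emph{upper} almost-Euclidean ratio, not a lower one). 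In your $\limsup$ step, the claim that ``the lifted $\delta$-packing in the limit provides the matching Hausdorff mass'' is precisely what must be proved: for a general GH limit there is no a priori relation between $\delta$-packing numbers and $\mathcal{H}^n$, so the argument is circular. Closing this gap is exactly what makes Colding's volume convergence theorem deep; its proof hinges on the quantitative almost-rigidity (almost-splitting) theory of Cheeger--Colding, which is absent from your sketch. You should also note the quantifier inconsistency inherited from the paper: the lemma states $p>n$, but the application takes $n=4$, $p=3$; the correct hypothesis for Petersen--Wei is $p>n/2$.

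The paper's own treatment is deliberately lighter: it does not reprove volume convergence at all. It cites Theorem 2.37 of \cite{tianfano} (which already adapts Colding's theorem, via \cite{volumecold,warpedrigid}, to integral Ricci bounds), and simply observes that every step of that proof is local in nature, so the \emph{global} $L^p$-Ricci hypothesis there can be replaced by the local hypothesis $(i)$. The content of the lemma, as the paper treats it, is this ``localization'' observation, not a new proof of volume convergence. If you wish to prove the lemma honestly, you must either (a) cite the established integral volume convergence result and verify locality, as the paper does, or (b) invoke the Cheeger--Colding machinery in full; the covering/packing argument by itself cannot produce the exact Hausdorff constant.
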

The difference between this lemma and Theorem 2.37 of \cite{tianfano} is that we only require a local integral Ricci bound $(i)$ rather than the global bound $$\int_M |Rc_-|^p dg \leq \Lambda$$
assumed in \cite{tianfano}. However, in \cite{tianfano}, the objects under consideration are time slices of a normalized Ricci flow on a Fano threefold, which have uniformly bounded diameter. The proof of Theorem 2.37 is stated to be a modification of volume convergence for noncollapsed Riemannian manifolds with Ricci curvature bounded below, given in \cite{volumecold,warpedrigid}. A careful examination of the proof shows that only the conditions $(i),(ii)$ are used, essentially due to the fact that the involved arguments are all local.

The following elementary lemma is essentially a consequence of Lemma 22 and a diagonal argument.
\begin{lem} Let $(X_{k},d_{k},p_{k})$ be a sequence of limit spaces as in Lemma 22, converging in the pointed Gromov-Hausdorff sense to (X,d,p), and suppose $r\leq r_{0}(n,p,\kappa,\Lambda)$. Then $$\mathcal{H}^{n}(B(p_{k},r))\to\mathcal{H}^{n}(B(p,r)).$$\end{lem}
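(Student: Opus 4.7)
The plan is to apply Lemma 22 twice: once to the individual sequences of Riemannian manifolds approximating each $(X_k, d_k, p_k)$, and once to a suitable diagonal sequence approximating $(X, d, p)$.

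By hypothesis, for each $k$ there is a sequence $(M_{k,j}^n, g_{k,j}, x_{k,j})_{j \in \mathbb{N}}$ of complete Riemannian manifolds satisfying conditions (i) and (ii) of Lemma 22 with uniform constants $\kappa, \Lambda$, and converging to $(X_k, d_k, p_k)$ in the pointed Gromov-Hausdorff sense as $j \to \infty$. For each fixed $k$, Lemma 22 yields
$$\mathcal{H}^n_{d_k}(B(p_k, r)) = \lim_{j \to \infty} |B(x_{k,j}, r)|_{g_{k,j}},$$
so I may choose $j(k)$ with $\bigl| |B(x_{k,j(k)}, r)|_{g_{k,j(k)}} - \mathcal{H}^n_{d_k}(B(p_k, r)) \bigr| < 1/k$. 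Invoking the triangle inequality for pointed Gromov-Hausdorff distance at each fixed scale, together with the assumption $(X_k, p_k) \to (X, p)$, I can additionally require this $j(k)$ to be large enough that $(M_{k,j(k)}^n, g_{k,j(k)}, x_{k,j(k)}) \to (X, d, p)$ in the pointed Gromov-Hausdorff sense as $k \to \infty$.

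The diagonal sequence still satisfies conditions (i) and (ii) with the same constants $\kappa, \Lambda$, so applying Lemma 22 to it gives
$$\mathcal{H}^n_d(B(p, r)) = \lim_{k \to \infty} |B(x_{k,j(k)}, r)|_{g_{k,j(k)}}.$$
Combined with the $1/k$-approximation above, this yields $\mathcal{H}^n_{d_k}(B(p_k, r)) \to \mathcal{H}^n_d(B(p, r))$. The only subtle point is the diagonal extraction itself, but this is routine: each of the two constraints (volume proximity at scale $r$, and Gromov-Hausdorff proximity to $X$ at any given ambient scale) is satisfied for all sufficiently large $j$ once $k$ is fixed, so one can enumerate conditions at successively larger ambient scales and $1/k$ tolerances in the standard way.
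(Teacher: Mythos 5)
Your proof is correct and takes essentially the same approach as the paper's: both extract a diagonal sequence $(M_{k,j(k)}, g_{k,j(k)}, x_{k,j(k)})$ converging to $(X,d,p)$ while keeping volumes within $1/k$ of $\mathcal{H}^n_{d_k}(B(p_k,r))$, and then apply the preceding lemma twice. The only cosmetic difference is that the paper also names an auxiliary approximating sequence for $(X,d,p)$, which it doesn't actually use.
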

\begin{proof} For each $k\in\mathbb{N}$, let $(M_{k,i},g_{k,i},x_{k,i})$ be a sequence of complete, pointed Riemannian manifolds satisfying $(i),(ii)$ of Lemma \ref{aux}, which converge in the pointed Gromov-Hausdorff sense to $(X_{k},d_{k},x_{k})$ as $i\to\infty$. Also let $(M_{i},g_{i},x_{i})$ be a sequence of such manifolds converging to $(X,d,p)$ in the pointed Gromov-Hausdorff sense. By Lemma \ref{aux}, we know that $$\lim_{i\to\infty}|B(x_{k,i},r)|_{g_{k,i}}=\mathcal{H}^{n}(B(x_{k},r))$$for each $k\in\mathbb{N}$. Thus, for each $k\in\mathbb{N}$, we can find $i(k)\in\mathbb{N}$ such that $$\left| |B(x_{k,i(k)},r)|_{g_{k,i(k)}}-\mathcal{H}^{n}(B(x_{k},r))\right| \leq2^{-k},$$
$$d_{GH}\left( (B(x_{k,i(k)},\alpha_k r),d_{g_{k,i(k)}},x_{k,i(k)}),(B(x_{k},\alpha_k r),d_{k},x_{k})\right)\leq r2^{-k},$$
where $\alpha_{k}\to\infty$. In particular, $(M_{k,i(k)},g_{k,i(k)},x_{k,i(k)})$ converge in the pointed Gromov-Hausdorff sense to $(X,d,p)$, so $$\left| |B(x_{k,i(k)},r)|_{g_{k,i(k)}}-\mathcal{H}^{n}(B(x,r))\right|\to 0.$$Combining expressions gives the claim.
\end{proof}

After possibly shrinking $\alpha$ so that $\alpha<r_0$, we can apply the previous lemma to 
$$(B^X(y_i,\alpha \lambda_i^{-1}),\lambda_i d,y_i)\to(B^Z(y_{\infty},\alpha),d_Z,y_{\infty}),$$
we conclude that
$$\mathcal{H}_{\lambda_i d}^4(B^X(y_i,\alpha \lambda_i^{-1})\to \mathcal{H}^4(B^Z(y_{\infty},\alpha))$$
as $i\to \infty$. That is,
$$\lim_{i\to\infty}\lambda_i^4 |B^X(y_i,\alpha \lambda_i^{-1})\cap \mathcal{R}|=\omega_n \alpha^4 ,$$
so the claim follows.

\printbibliography

\end{document}